\newtheorem{theorem}{Theorem}[section]
\newtheorem{proposition}[theorem]{Proposition}
\newtheorem{corollary}[theorem]{Corollary}
\newtheorem{remark}[theorem]{Remark}
\newtheorem{lemma}[theorem]{Lemma}
\newtheorem{definition}[theorem]{Definition}
\numberwithin{equation}{section}
\begin{document}
\title{commutators of pre Lie $n$-algebras and $PL_{\infty}$-algebras}

\author{Mengjun Wang}
\address{Department of Mathematics, Nanjing University,
Nanjing, 210008, P.R.China} \email{wangmj@nju.edu.cn}

\author{Zhixiang Wu}
\address{School of  Mathematical  Sciences, Zhejiang University,
Hangzhou, 310027, P.R.China} \email{wzx@zju.edu.cn}

\thanks{The authors are sponsored
by  NNSFC (No.11871421), ZJNSF (No. LY17A010015)}
 \subjclass[2000]{Primary 17D25, 18G55, Secondary 55U35}

\keywords{$n$-ary algebras, pre Lie algebras, left-symmetric algebras, $L_{\infty}$-algebras, $A_{\infty}$-algebras, $PL_{\infty}$-algebras,   homotopy algebras, commutators, pre Lie $n$-algebras}

\begin{abstract}
We show that a $PL_{\infty}$-algebra $V$ can be described by a nilpotent coderivation of degree $-1$ on coalgebra $P^*V$. Based on this result, we can generalise the result of T. Lada and show that every $A_{\infty}$-algebra carries a $PL_{\infty}$-algebra structure and every $PL_{\infty}$-algebra carries an $L_{\infty}$-algebra structure. In particular, we obtain a pre Lie $n$-algebra structure on an arbitrary partially associative  $n$-algebra and deduce pre Lie $n$-algebras are $n$-Lie admissible.
 \end{abstract}

\setcounter{tocdepth}{1}
\maketitle
\section{Introduction}\label{intro}
Left-symmetric algebras were introduced by A. Cayley \cite{c} in 1896 as a kind of rooted tree algebras and bacame being noticed after Vinberg \cite{v} in 1960 and Koszul \cite{ko} in 1961 introduced them in the study of convex homogeneous
cones and affinely manifolds. Recall that a left-symmetric algebra is a space $V$ endowed by a bilinear map $\mu:V\otimes V\to V$  satisfying   $$(x,y,z)=(y,x,z)$$ for all $x,y,z\in V$, where $(x,y,z):=\mu(\mu(x,y),z)-\mu(x,\mu(y,z))$. The opposite algebras of left-symmetric algebras are called right-symmetric algebras and they are both called pre Lie algebras. It is easy to see that every associative algebra is a pre Lie algebra. Any pre Lie algebra $(V,\mu)$ is a Lie-admissible algebra, i.e. the commutator $[x,y]:=\mu(x,y)-\mu(y,x)$ defines a Lie bracket on $V$. 

Many generalizations of pre Lie algebras has been widely studied as well. Homotopy pre Lie algebras ($PL_{\infty}$-algebras), for instant, were developed in \cite{cl} in the context of operad, and the concept of generalized pre-Lie algebras of order $n$ was introduced in \cite{pbg} without specific expression formulae for $n>3$. Similar generalizations of associative algebras and Lie algebras were introduced in \cite{j1,j,laj,go}. The purpose of this paper is to analyse the relation of these $n$-ary and homotopy algebra structures of associative, pre Lie, Lie type. Inspired by T. Lada \cite{la}, we first show that a $PL_{\infty}$-algebra structure on $V$ is equivalent to a nilpotent coderivation of degree $-1$ on coalgebra $P^*V$. By coalgebra maps between corresponding coalgebras of $A_{\infty},PL_{\infty}$ and $L_{\infty}$-algebras, we can obtain a $PL_{\infty}$-algebra structure on an $L_{\infty}$-algebra and a $L_{\infty}$-algebra structure on a $PL_{\infty}$-algebra. As a special case, we can finally give the commutators of $n$-ary algebras. The main results can be summarised as follows:
\begin{enumerate}
	\item [$\bullet$] Theorem \ref{tpl} which states that the $PL_{\infty}$-algebra structure on $V$ can be extended as a nilpotent coderivation of degree $-1$ on coalgebra $P^*V$.
	\item [$\bullet$] Theorem \ref{tc} which gives the relation of homotopy algebras and Corollary \ref{co} which gives the relation of $n$-ary algebras. 
\end{enumerate}

The paper is organised as follows. In Section \ref{s1}, we  provide some preliminaries and introduce a simple way to define the algebra expression formulae of pre Lie type. There are two different definitions of homotopy algebras in the type of associative, pre Lie and Lie, and we refer to them as degree $-1$ version and degree $n-2$ version. We show that $n$-ary algebras in the three types can be identified with homotopy algebras of degree $n-2$ version in the same type, while homotopy algebras of degree $-1$ version are closely related to coalgebras presented in Section \ref{s2}. We illustrate these two versions of homotopy algebras in same type are equivalent and homotopy algebras can be characterized by coderivations of corresponding coalgebras in Section \ref{s2}.
 
 With the three coalgebras and coalgebra maps between them in Section \ref{s2}, we derive the relation among   homotopy algebras in Section \ref{s3} by their equivalent characterizations in Section \ref{s2}. Since an $n$-ary algebra can be identified with a special   homotopy algebra by Section \ref{s1}, we get the corresponding relation among $n$-ary algebras.

\section{Preliminaries}\label{s1}
In this paper, we work over a field $\mathbb{K}$ of characteristic 0 and all the vector spaces are over $\mathbb{K}$. The symmetric group of the set $\{1,2,\cdots,n\}$ is denoted by  $\mathbb{S}_n$. While  $Sh(i_1,\cdots,i_m)$ is the subset of  $\mathbb{S}_n$ consisting of all $ (i_1,\cdots,i_m) $-unshuffles of $\mathbb{S}_n$, where $i_1+\cdots+i_m=n$.  Recall that an $ (i_1,\cdots,i_m) $-unshuffle is an element in $\mathbb{S}_n$   such that  $$\sigma(1+\sum\limits_{t=0}^{k-1}i_t)<\cdots<\sigma(\sum\limits_{t=0}^{k}i_t),\mbox{ for all }k=1,2,\cdots,m.$$ It is well-kown that $\sum\limits_{\sigma\in\mathbb{S}_n}\sigma$ is a nonzero integral in Hopf algebra $\mathbb{K}\mathbb{S}_n$. We always use $w_n$ to denote the integral of $\mathbb{K}\mathbb{S}_n$ in the sequel.

For any vector spaces $V$ and $W$ over the field $\mathbb{K}$, we use $Hom(V,W)$ to denote the space of all $\mathbb{K}$-linear maps from $V$ to $W$. The notation  $V\otimes W$ means $V\otimes_{\mathbb{K}}W$, the tensor product of $V$ and $W$ over the field $\mathbb{K}$. We use $\otimes ^nV$ to denote the space $\underbrace{V\otimes V\otimes\cdots\otimes V}_n$. It is well-know that $\otimes^nV$ is a right $\mathbb{K}\mathbb{S}_n$-module with the following action
$$\rho_{\sigma_1}(x_1\otimes\cdots\otimes x_n)=sgn(\sigma_1)(x_{\sigma_1(1)}\otimes\cdots\otimes x_{\sigma_1(n)})$$
for $\sigma_1\in\mathbb{S}_n$ and $x_1\otimes x_2\otimes\cdots\otimes x_n\in\otimes^nV$. The invariant subspace of $\otimes^nV$ under this action is denoted by $\wedge^nV$.
The identity endomorphism of $V$ is denoted by $id_V$ and $id_{\otimes^nV}$ is simply denoted by $I_n$.

Further assume that $V$ is  a $ \mathbb{Z} $-graded vector space $V:=\oplus_{n\in\mathbb{Z}}V^n$.  We follow \cite{s} for the terminology on the category of graded vector spaces. 
For any  $x\in V^n$ for some $n\in\mathbb{Z}$, we say that  $x$ is of homogeneous with degree $n$. The degree of a homogeneous element $x$ is denoted by  $|x|$.  If $x_i\in V$ are homogeneous, then the  degree of either $x_1\otimes\cdots\otimes x_n \in \otimes^nV$ or $x_1\wedge\cdots\wedge x_n\in\wedge^n V$ is defined as  $\sum\limits_{i=1}^{n}|x_i|$. 
 Let $f:V\to W$ be a map of graded vector spaces. Then $f$ is called a homogeneous linear map of cohomological degree $ n $ if $f(V^i)\subseteq W^{i+n}$ for any $n\in\mathbb{Z}$. The cohomogical degree  of a homogeneous linear map $f$ is denoted by $|f|.$
Suppose that $f:V\to V^{\prime}$ and $g:W\to W^{\prime}$  are two homogeneous linear maps. Then  the tensor product of $f$ and $g$, denoted by $f\otimes g$,   is a homogeneous linear map
from $V\otimes W$ to $V'\otimes W'$ determined by $$(f\otimes g)(x\otimes y):=(-1)^{n|g|}f(x)\otimes g(y)$$ for any $x\in V^n, y \in W$.

 For any transposition $(i,i+1)\in\mathbb{S}_n$ and $x_1\wedge x_2\wedge \cdots\wedge x_n\in\wedge^n V$, we have 
 $$x_1\wedge x_2\wedge \cdots\wedge x_n=(-1)^{|x_i||x_{i+1}|}x_1\wedge\cdots\wedge x_{i+1}\wedge x_i\wedge\cdots\wedge x_n.$$ Replace the transposition by an arbitrary element $\sigma $ in $\mathbb{S}_n$, and we can get the Koszul sign $\epsilon(\sigma):=\epsilon(\sigma;x_1,\cdots,x_n)$ \cite{m} recursively by a transposition decomposition of $\sigma$. Specifically, $$x_1\wedge x_2\wedge\cdots\wedge x_n=\epsilon(\sigma;x_1,\cdots,x_n)x_{\sigma(1)}\wedge x_{\sigma(2)}\wedge \cdots\wedge x_{\sigma(n)}.$$ 
 We simplified $\epsilon(\sigma;x_1,\cdots,x_n)$ as $\epsilon(\sigma)$ sometimes.

\begin{remark}\label{rm1}
	By definition, $x_{\tau(1)}\wedge\cdots\wedge x_{\tau(n)}=\epsilon(\sigma;x_{\tau(1)},\cdots,x_{\tau(n)})x_{\tau\sigma(1)}\wedge\cdots\wedge x_{\tau\sigma(n)}$ for $\sigma,\tau\in\mathbb{S}_n$. Converting both sides of the equation to multiples of $x_1\wedge\cdots\wedge x_n$, we have $$\epsilon(\tau;x_1,\cdots,x_n)=\epsilon(\sigma;x_{\tau(1)},\cdots,x_{\tau(n)})\epsilon(\tau\sigma;x_1,\cdots,x_n).$$ Since the value of $\epsilon$ is $\pm1$, the above equation can be expressed as $$\epsilon(\sigma;x_{\tau(1)},\cdots,x_{\tau(n)})=\epsilon(\tau\sigma;x_1,\cdots,x_n)\epsilon(\tau;x_1,\cdots,x_n):=\epsilon(\tau\sigma)\epsilon(\tau).$$
	\end{remark}

Similar to the case when $V$ is non-graded, $\otimes^nV$ is a right $\mathbb{K}\mathbb{S}_n$-module, where the action is given by $$\rho^{(1)}_{\sigma}(x_1\otimes\cdots\otimes x_n):=\epsilon(\sigma)(x_{\sigma(1)}\otimes\cdots\otimes x_{\sigma(n)}).$$ With this action, we can prove that
 $\wedge^nV$ is the space of coinvariants $(\otimes^nV)_{\mathbb{S}_n}:=(\otimes^nV)/(\rho^{(1)}_{\sigma}(\textbf{x})-\textbf{x},\sigma\in\mathbb{S}_n,\textbf{x}\in\otimes^nV)$ \cite{lv}. If a linear mapping $\hat{\mu}_n$  from $\otimes ^nV$ to $V$ satisfies $ \hat{\mu}_n=\hat{\mu}_n\circ (\rho_{\sigma}^{(1)}\otimes I_1) $ for $\sigma\in \mathbb{S}_{n-1}$, then it can be regarded as  a linear map from $\wedge^{n-1}V\otimes V$ to $V$. Similarly, any mapping $\hat{\mu}$ from $\otimes^nV$ to $V$ satisfying  $\hat{\mu}_n=\hat{\mu}_n\circ \rho_{\sigma}^{(1)}$ for any $\sigma\in\mathbb{S}_n$ can be viewed as  a linear map from  $\wedge^nV$ to $V$. 

With the previous preparation, we can  recall definitions of  $A_{\infty}$-algebras in \cite{j},  $L_{\infty}$-algebras in \cite{laj}, and $PL_{\infty}$-algebras in  \cite{cl} as follows.

\begin{definition}\label{dapl}
Let $V$ be a graded vector space equipped with a collection $\{\hat{\mu}_n:\otimes^n V\to V,n\geq1\}$ of homogeneous linear maps of cohomological degree $-1$. Then $(V,\{\hat{\mu}_n\})$ is 
		\begin{enumerate}
			\item [$\bullet$] an $A_{\infty}$-algebra if\begin{equation}\label{m2}
			\sum\limits_{i+j=n+1}\sum\limits_{m=0}^{i-1}\hat{\mu}_i\circ(I_{m}\otimes\hat{\mu}_j\otimes I_{i-m-1})=0, \forall n\geq 1,
			\end{equation}
			\item [$\bullet$]\label{pl2} a $PL_{\infty}$-algebra if \begin{equation}
			\begin{cases}\label{pl2}
			\hat{\mu}_n=\hat{\mu}_n\circ (\rho_{\sigma}^{(1)}\otimes I_1),\mbox{ for }\sigma\in \mathbb{S}_{n-1},\\
			\sum\limits_{i+j=n+1}\sum\limits_{m=0}^{i-1}\frac{1}{(i-1)!(j-1)!}\hat{\mu}_i\circ(I_{m}\otimes\hat{\mu}_j\otimes I_{i-m-1})\circ(\rho^{(1)}_{w_{i+j-2}}\otimes I_1)=0,\forall n\geq 1,
			\end{cases}
			\end{equation}
			\item [$\bullet$]\label{l2} an $L_{\infty}$-algebra if \begin{equation}
			\begin{cases}
			\hat{\mu}_n=\hat{\mu}_n\circ \rho_{\sigma}^{(1)},\mbox{ for }\sigma\in \mathbb{S}_n,\\
			\sum\limits_{i+j=n+1}\sum\limits_{m=0}^{i-1}\frac{1}{(i-1)!j!}\hat{\mu}_i\circ(I_{m}\otimes\hat{\mu}_j\otimes I_{i-m-1})\circ\rho^{(1)}_{w_{i+j-1}}=0, \forall n\geq 1.
			\end{cases}
			\end{equation} 
			\end{enumerate}
	
\end{definition}
\begin{remark}\label{rapl} (1) The equation \ref{pl2} can be replaced by 
		 $$\begin{array}{l}
		\sum\limits_{i+j=n+1}(\sum\limits_{\sigma \in Sh(j-1,1,i-2)}\epsilon(\sigma)\hat{\mu}_i(\hat{\mu}_j(x_{\sigma(1)},\cdots,x_{\sigma(j)}),x_{\sigma(j+1)},\cdots,\\ x_{\sigma{(i+j-2)}},x_{i+j-1})=\sum\limits_{\sigma \in Sh(i-1,j-1)}(-1)^{1+(\sum\limits_{r=1}^{i-1}|x_{\sigma(r)}|)}\epsilon(\sigma)\hat{\mu}_i(x_{\sigma(1)},\cdots,x_{\sigma(i-1)},\\
		\hat{\mu}_j(x_{\sigma(i)},\cdots,x_{\sigma(i+j-2)},\cdots,x_{i+j-1}))).
	\end{array}$$
	
(2)  Replace ``$\rho^{(1)}_{w_{i+j-2}}\otimes I_1$'' by ``$ I_1\otimes\rho^{(1)}_{w_{i+j-2}} $'', and we can get the notion of right-symmetric $\infty$-algebras which is exactly that of $PL_{\infty}$-algebras in \cite{cl}.

(3) We notice that for $\{\hat{\mu}_n\}$ satisfying equations (\ref{pl2}), the opposite operations $\{\hat{\mu}^{op}_n\}$ defined by
		$$\hat{\mu}^{op}_n(x_1,x_2,\cdots,x_n):=\hat{\mu}_n(x_n,x_{n-1},\cdots,x_1)$$ are not $PL_{\infty}$-algebras in \cite{cl} in general. In fact, it is the tensor rules of maps that contribute to this phenomenon. We can demonstraste the procedure in the following example.
		\begin{align*}
		&(g\circ(I_1\otimes f\otimes I_1))(x\otimes y_1\otimes y_2\otimes z)=(-1)^{|f||x|}g(x\otimes f(y_1\otimes y_2)\otimes z),\\
		&(g^{op}\circ(I_1\otimes f^{op}\otimes I_1))(z\otimes y_2\otimes y_1\otimes x)=(-1)^{|f||z|}g(x\otimes f(y_1\otimes y_2)\otimes z).
		\end{align*} 
		That is also the reason why $PL_{\infty}$-algebras in \cite{cl} have different signs with equations (\ref{pl2}). Although there is such an obstruction for graded vector spaces, we can get corresponding notions of right-symmetric algebras simply by reversing left-symmetric operations in non-graded case.
\end{remark}

Another right $\mathbb{K}\mathbb{S}_n$-module action on $\otimes^nV$ is defined via $$\rho^{(2)}_{\sigma}(x_1\otimes\cdots\otimes x_n):=sgn(\sigma)\epsilon(\sigma)(x_{\sigma(1)}\otimes\cdots\otimes x_{\sigma(n)}).$$ Replacing $\rho^{(1)}$ by $ \rho^{(2)} $ and equipping the above structure equations with sign functions, we can give  an equivalent  definition of Definition  \ref{dapl}. Namely, 
 
\begin{definition}\label{dal}
Let $V$ be a graded vector space equipped with a collection $\{\mu_n:\otimes^n V\to V,n\geq1\}$ of homogeneous linear maps of cohomological degree $n-2$. $V$ is called 
		\begin{enumerate}
			\item [$\bullet$] an $A_{\infty}$-algebra if \begin{equation}\label{m1}
			\sum\limits_{i+j=n+1}\sum\limits_{m=0}^{i-1}(-1)^{j(i-m-1)+m}\mu_i\circ(I_{m}\otimes\mu_j\otimes I_{i-m-1})=0,\forall n\geq 1
			\end{equation}
			\item [$\bullet$] a $PL_{\infty}$-algebra if \begin{equation}\label{pl1}
			\begin{cases}
			\mu_n=\mu_n\circ (\rho_{\sigma}^{(2)}\otimes I_1),\mbox{ for }\sigma\in \mathbb{S}_{n-1},\\
			\sum\limits_{i+j=n+1}\sum\limits_{m=0}^{i-1}\frac{(-1)^{j(i-m-1)+m}}{(i-1)!(j-1)!}\mu_i\circ(I_{m}\otimes\mu_j\otimes I_{i-m-1})\circ(\rho^{(2)}_{w_{i+j-2}}\otimes I_1)=0,\forall n\geq 1,
			\end{cases}
			\end{equation}
			\item [$\bullet$] an $L_{\infty}$-algebra if \begin{equation}\label{l1}
			\begin{cases}
			\mu_n=\mu_n\circ \rho_{\sigma}^{(2)},\mbox{ for }\sigma\in \mathbb{S}_n,\\
			\sum\limits_{i+j=n+1}\sum\limits_{m=0}^{i-1}\frac{(-1)^{j(i-m-1)+m}}{(i-1)!j!}\mu_i\circ(I_{m}\otimes\mu_j\otimes I_{i-m-1})\circ\rho^{(2)}_{w_{i+j-1}}=0, \forall n\geq 1.
			\end{cases}
			\end{equation} 
			\end{enumerate}
\end{definition}

These two different forms of homotopy algebras in the same type are equivalent. The detailed  proof of this is presented in subsection \ref{s32}.
\begin{remark}
	Explicitly, for $n\geq1$ and $x_1,x_2,\cdots,x_{i+j-1}\in V$, equation (\ref*{pl1}) means that
	\begin{align*}
	&\sum\limits_{i+j=n+1}(\sum\limits_{\sigma \in Sh(j-1,1,i-2)}(-1)^{j(i-1)}sgn(\sigma)\epsilon(\sigma)\mu_i(\mu_j(x_{\sigma(1)},\cdots,x_{\sigma(j)}),x_{\sigma(j+1)},\cdots,x_{\sigma{(i+j-2)}},\\
	&x_{i+j-1})=\sum\limits_{\sigma \in Sh(i-1,j-1)}(-1)^{i+j(\sum\limits_{r=1}^{i-1}|x_{\sigma(r)}|)}sgn(\sigma)\epsilon(\sigma)\mu_i(x_{\sigma(1)},\cdots,x_{\sigma(i-1)},\mu_j(x_{\sigma(i)},\cdots,\\
	&x_{\sigma(i+j-2)},\cdots,x_{i+j-1}))).
	\end{align*}
\end{remark}

In Definition \ref{dal}, if  (\ref{m1}) is replaced by
\begin{equation}
\sum\limits_{i=0}^{n-1}\mu\circ(I_i\otimes\mu\otimes I_{n-1-i})=0,
\end{equation} then $V$  is called a partially associative $n$-algebra in \cite{go}.
 Imitating constructing method of $PL_{\infty}$-algebras and referring to the definition of partially associative $n$-algebras, we introduce pre Lie $n$-algebras as follows.

\begin{definition}\label{pl} Suppose $V$ is a vector space and $\mu\in Hom(\otimes ^nV,V)$. Then $(V,\mu)$ is a \textbf{pre Lie $n$-algebra} if $\mu$ satisfies 
		\begin{equation}\label{eq28}
		\begin{cases}
		\mu=\mu\circ(\rho_{\sigma}\otimes I_1),\mbox{ for }\sigma\in \mathbb{S}_{n-1},\\
		\sum\limits_{i=0}^{n-1}\frac{(-1)^{i(n-1)}}{((n-1)!)^2}\mu\circ(I_i\otimes\mu\otimes I_{n-1-i})\circ(\rho_{w_{2n-2}}\otimes I_1)=0.
		\end{cases}
		\end{equation}
 \end{definition}
\begin{remark} (1)
In \cite{go}, a Lie $n$-algebra  is defined to be  a vector $V$ with $\mu\in Hom(\otimes ^nV,V)$ such that 
\begin{equation}
		\begin{cases}
		\mu=\mu\circ\rho_{\sigma},\mbox{ for }\sigma\in \mathbb{S}_n,\\
		\sum\limits_{i=0}^{n-1}\frac{(-1)^{i(n-1)}}{(n-1)!n!}\mu\circ(I_i\otimes\mu\otimes I_{n-1-i})\circ\rho_{w_{2n-1}}=0.
		\end{cases}
\end{equation}
Thus a Lie $n$-algebra in \cite{go}  can be regarded as a pre Lie $n$-algebra.

 (2) A pre Lie algebra in \cite{b}  is nothing but a pre Lie 2-algebra.
\end{remark}

Next, we will prove that a pre Lie $n$-algebra is exactly the left-symmetric version of generalized pre-Lie algebras of order $n$ in \cite{pbg}.
To achieve this aim, let us recall a result in \cite{w}.
For any vector space $V$, let  $C^n(V,V):=\{\mu\in Hom(\otimes^{n+1} V,V)|\mu=\mu\circ(\rho_{\sigma}\otimes I_1),\mbox{ for }\sigma\in \mathbb{S}_{n-1}\}$ and $C(V,V):=\oplus_{n\in\mathbb{N}}C^n(V,V)$. Then the following result holds. 

\begin{theorem}\cite{w} $C(V,V)$ is a graded Lie algebra with a  bracket given by 
	\begin{equation}
	[f,g]^{\circ}:=f\circ g-(-1)^{mn}g\circ f,\mbox{ for }f\in C^m(V,V),g\in C^n(V,V),
	\end{equation}
	where $f\circ g\in C^{m+n}(V,V)$ is defined by
	\begin{align}
	&(f\circ g)(x_1,\cdots x_{m+n+1})\notag\\
	=&\sum\limits_{\sigma \in Sh(n,1,m-1)}sgn(\sigma)f(g(x_{\sigma(1)},\cdots,x_{\sigma(n)},x_{\sigma(n+1)}),x_{\sigma(n+2)},\cdots,x_{\sigma(m+n)},x_{m+n+1})\notag\\
	&+(-1)^{mn}\sum\limits_{\sigma \in Sh(m,n)}sgn(\sigma)f(x_{\sigma(1)},\cdots,x_{\sigma(m)},g(x_{\sigma(m+1)},\cdots,x_{\sigma(m+n)},x_{m+n+1})).
	\end{align}
\end{theorem}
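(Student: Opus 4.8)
The plan is to show that the operation $\circ$ equips $C(V,V)=\oplus_{n\in\mathbb{N}}C^n(V,V)$ with the structure of a graded pre Lie algebra (for the grading in which $C^n(V,V)$ sits in degree $n$), and then to invoke the standard fact that the graded commutator of any graded pre Lie algebra is a graded Lie bracket; this I would do in three steps. First I would check that $\circ$ is well defined, i.e.\ that for $f\in C^m(V,V)$ and $g\in C^n(V,V)$ the element $f\circ g$ again satisfies the symmetry constraint defining $C^{m+n}(V,V)$. The two sums in the definition of $f\circ g$ range over the unshuffle sets $Sh(n,1,m-1)$ and $Sh(m,n)$ and both leave the last input $x_{m+n+1}$ untouched; combined with the hypotheses that $f$ is signed-symmetric in its first $m$ inputs and $g$ in its first $n$ inputs, this forces $f\circ g$ to be signed-symmetric in its first $m+n$ inputs. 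The verification is a reindexing: applying a transposition to two of the first $m+n$ arguments merely permutes the summands and, by the cocycle property of the Koszul sign recorded in Remark~\ref{rm1} together with the multiplicativity of $sgn$, reproduces exactly the sign prescribed by $\rho_\sigma$. (Equivalently, one identifies $C^k(V,V)$ with $Hom(\wedge^kV\otimes V,V)$ as in Section~\ref{s1} and reads each sum as an evident insertion operator.)

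The heart of the argument is the second step: computing the associator $(f,g,h)^{\circ}:=(f\circ g)\circ h-f\circ(g\circ h)$ for homogeneous $f,g,h\in C(V,V)$. Expanding $(f\circ g)\circ h$ via the definition of $\circ$ splits its terms into two families: the ``nested'' terms, in which all inputs fed to $h$ are absorbed into the single slot occupied by $g$, and the ``disjoint'' terms, in which $h$ is inserted into a slot of $f$ disjoint from the $g$-slot. A composition-of-unshuffles bijection shows that the nested terms reassemble, with the correct signs, precisely into $f\circ(g\circ h)$, so that $(f,g,h)^{\circ}$ equals the sum of the disjoint terms. That sum describes $f$ with $g$ and $h$ plugged into two distinct slots and is therefore invariant under interchanging $g$ and $h$ up to the Koszul sign, which gives the graded pre Lie identity
\begin{equation*}
(f,g,h)^{\circ}=(-1)^{|g||h|}(f,h,g)^{\circ}.
\end{equation*}
I expect this step to be the main obstacle: one must control how the unshuffles defining $f\circ g$ and then those defining $(f\circ g)\circ h$ compose, and how the signs $sgn(\sigma)$, the Koszul signs $\epsilon$, and the graded tensor rule $(\phi\otimes\psi)(x\otimes y)=(-1)^{|\psi||x|}\phi(x)\otimes\psi(y)$ interact. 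This is the pre Lie counterpart of Gerstenhaber's brace computation on the Hochschild complex, carried over to the shuffle-symmetrized setting of $C(V,V)$.

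Finally I would conclude. Graded antisymmetry $[f,g]^{\circ}=-(-1)^{|f||g|}[g,f]^{\circ}$ is immediate from the definition of the bracket. For the graded Jacobi identity one expands $[[f,g]^{\circ},h]^{\circ}$ together with its two cyclic companions; the contributions that would come from $\circ$ being associative cancel, and what remains is an alternating combination of associators $(\cdot,\cdot,\cdot)^{\circ}$ that vanishes by the pre Lie identity of the previous step. This is precisely the general assertion that a graded pre Lie algebra is graded Lie-admissible, and it shows that $\big(C(V,V),[\,\cdot\,,\,\cdot\,]^{\circ}\big)$ is a graded Lie algebra, as claimed.
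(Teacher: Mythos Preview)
The paper does not supply a proof of this theorem: it is quoted verbatim from \cite{w} and used as a black box, so there is no argument in the paper to compare your proposal against. Your outline is the standard and correct route to such results---show that $\circ$ is a graded (right) pre Lie product on $C(V,V)$ by computing the associator and identifying it with the ``disjoint insertions'' expression symmetric in the last two arguments, then invoke graded Lie-admissibility of pre Lie algebras. One small remark: in the context where this theorem is stated in the paper, $V$ is an ordinary (ungraded) vector space and the action is $\rho_\sigma$, so the Koszul signs $\epsilon(\sigma)$ and the graded tensor rule you mention all collapse to~$1$; only $sgn(\sigma)$ survives, which simplifies the bookkeeping you anticipate in step two.
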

Then we get a necessary and sufficient condition of a pre Lie $n$-algebra.
\begin{lemma} Suppose that  $\mu\in C^{n-1}(V,V)$. Then  $(V,\mu)$ is  a pre Lie $n$-algebra if and only if $\mu \circ\mu=0$.
\end{lemma}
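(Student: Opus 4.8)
The hypothesis $\mu\in C^{n-1}(V,V)$ is exactly the first line of \eqref{eq28}, and it is also what makes the product $\mu\circ\mu$ of the preceding theorem well defined; so the plan is to prove that, under this hypothesis, the second line of \eqref{eq28} is nothing but the identity $\mu\circ\mu=0$ evaluated on $x_1,\dots,x_{2n-1}$. Concretely I would expand $\mu\circ\mu$ by the circle-product formula with $f=g=\mu$ (so that the $m$ and $n$ appearing there both become $n-1$), getting a $Sh(n-1,1,n-2)$-sum in which the inner $\mu$ feeds the first slot of the outer $\mu$, and a $Sh(n-1,n-1)$-sum in which the inner $\mu$ feeds the last slot of the outer $\mu$; then I would match this expression termwise against the left hand side of the second line of \eqref{eq28}.

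The only device needed is an averaging identity. If $F\colon\otimes^{p}V\to V$ is antisymmetric for the sign action $\rho$ under a Young subgroup $\mathbb S_{p_1}\times\cdots\times\mathbb S_{p_k}\subseteq\mathbb S_p$, $p=p_1+\cdots+p_k$, then $\sigma\mapsto sgn(\sigma)\,F(x_{\sigma(1)},\dots,x_{\sigma(p)})$ is constant on the left cosets of that subgroup, the $(p_1,\dots,p_k)$-unshuffles forming a set of coset representatives, so
\[
\sum_{\sigma\in\mathbb S_p}sgn(\sigma)\,F(x_{\sigma(1)},\dots,x_{\sigma(p)})=p_1!\cdots p_k!\sum_{\sigma\in Sh(p_1,\dots,p_k)}sgn(\sigma)\,F(x_{\sigma(1)},\dots,x_{\sigma(p)}).
\]
Since $\mu\in C^{n-1}(V,V)$ is antisymmetric in its first $n-1$ inputs, both summands of $\mu\circ\mu$ have this shape: applying the identity identifies the $Sh(n-1,1,n-2)$-sum with $\frac{1}{(n-1)!(n-2)!}\,\mu\circ(\mu\otimes I_{n-1})\circ(\rho_{w_{2n-2}}\otimes I_1)$ and the $Sh(n-1,n-1)$-sum with $\frac{(-1)^{(n-1)^2}}{((n-1)!)^2}\,\mu\circ(I_{n-1}\otimes\mu)\circ(\rho_{w_{2n-2}}\otimes I_1)$.

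For the converse comparison I would split the sum in the second line of \eqref{eq28} by the position $i$ of the inserted $\mu$. The term $i=n-1$ is literally the second summand of $\mu\circ\mu$ found above. For $i=0,\dots,n-2$ the inserted $\mu$ does not meet the distinguished last tensor slot, so after the full symmetrization $\rho_{w_{2n-2}}$ its block of $n$ inputs can be slid to the front: antisymmetry of the outer $\mu$ in its first $n-1$ slots contributes a sign $(-1)^i$, and relabelling the summation variable by the corresponding block transposition contributes a sign $(-1)^{in}$, whence
\[
\mu\circ(I_i\otimes\mu\otimes I_{n-1-i})\circ(\rho_{w_{2n-2}}\otimes I_1)=(-1)^{i(n-1)}\,\mu\circ(\mu\otimes I_{n-1})\circ(\rho_{w_{2n-2}}\otimes I_1),
\]
using $(-1)^{i(n+1)}=(-1)^{i(n-1)}$. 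The coefficient $(-1)^{i(n-1)}$ in \eqref{eq28} cancels this sign, so the $n-1$ terms with $i=0,\dots,n-2$ are equal and add to $\frac{n-1}{((n-1)!)^2}=\frac{1}{(n-1)!(n-2)!}$ times $\mu\circ(\mu\otimes I_{n-1})\circ(\rho_{w_{2n-2}}\otimes I_1)$, that is, to the first summand of $\mu\circ\mu$. Adding the two contributions shows that the second line of \eqref{eq28} equals $(\mu\circ\mu)(x_1,\dots,x_{2n-1})$, which gives the lemma.

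The main obstacle I anticipate is purely the sign and normalization bookkeeping: verifying that the block-transposition sign is $(-1)^{in}$ and that it combines with the $(-1)^i$ from the outer antisymmetry to reproduce exactly the $(-1)^{i(n-1)}$ of \eqref{eq28}; checking that the $1/((n-1)!)^2$ normalizations of Definition \ref{pl} are absorbed by the $p_1!\cdots p_k!$ factors relating unshuffle sums to full symmetrizations; and keeping consistent the mild index shift between the circle-product convention (distinguished argument $x_{m+n+1}$) and the insertion convention used in \eqref{eq28}.
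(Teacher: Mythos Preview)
Your proposal is correct and follows essentially the same route as the paper's proof: both expand the left hand side of \eqref{eq28}, separate the term $i=n-1$ from the terms $i=0,\dots,n-2$, use the antisymmetry of the outer $\mu$ to slide the inner $\mu$ to the first slot (picking up $(-1)^i$), relabel via the block transposition (picking up $(-1)^{in}$), and then invoke the antisymmetry under $\mathbb S_{n-1}\times\mathbb S_1\times\mathbb S_{n-2}$ (your ``averaging identity'') to pass between the full $\mathbb S_{2n-2}$-symmetrization and the $Sh(n-1,1,n-2)$-sum, matching the circle-product formula for $\mu\circ\mu$. The only cosmetic difference is that the paper carries out the computation in one pass from \eqref{eq28} to $\mu\circ\mu$, whereas you phrase the averaging step as a standalone lemma and check the two halves of the identification separately.
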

\begin{proof} Since 
	\begin{align*}
	&\sum\limits_{i=0}^{n-1}\frac{(-1)^{i(n-1)}}{((n-1)!)^2}\mu\circ(I_i\otimes\mu\otimes I_{n-1-i})\circ(\rho_{w_{2n-2}}\otimes I_1)(x_1,\cdots,x_{2n-1})\\
	=&\sum\limits_{\sigma \in \mathbb{S}_{2n-2}}sgn(\sigma)(\sum\limits_{i=0}^{n-2}\frac{(-1)^{i(n-1)}}{((n-1)!)^2}\mu(x_{\sigma(1)}, \cdots, x_{\sigma(i)}, \mu(x_{\sigma(i+1)}, \cdots, x_{\sigma(i+n)}), x_{\sigma(i+n+1)}, \cdots,\\
	&x_{\sigma(2n-2)}, x_{2n-1})+\frac{(-1)^{n-1}}{((n-1)!)^2}\mu(x_{\sigma(1)}, \cdots, x_{\sigma(n-1)}, \mu(x_{\sigma(n)}, \cdots, x_{\sigma(2n-2)}, x_{2n-1})))\\
	=&\sum\limits_{\sigma \in \mathbb{S}_{2n-2}}sgn(\sigma)(\sum\limits_{i=0}^{n-2}\frac{(-1)^{in}}{((n-1)!)^2}\mu(\mu(x_{\sigma(i+1)}, \cdots, x_{\sigma(i+n)}), x_{\sigma(1)}, \cdots, x_{\sigma(i)},  x_{\sigma(i+n+1)}, \cdots,\\
	& x_{\sigma(2n-2)}, x_{2n-1})+\frac{(-1)^{n-1}}{((n-1)!)^2}\mu(x_{\sigma(1)}, \cdots, x_{\sigma(n-1)}, \mu(x_{\sigma(n)}, \cdots, x_{\sigma(2n-2)}, x_{2n-1})))\\
	=&\frac{1}{n-1}(\sum\limits_{i=0}^{n-2}\sum\limits_{\sigma \in Sh(n-1,1,n-2)}sgn(\sigma)\mu(\mu(x_{\sigma(1)}, \cdots, x_{\sigma(n)}), x_{\sigma(n+1)}, \cdots, x_{\sigma(2n-2)},\\
	& x_{2n-1}))+(-1)^{n-1}\sum\limits_{\sigma \in Sh(n-1,n-1)}sgn(\sigma)\mu(x_{\sigma(1)}, \cdots, x_{\sigma(n-1)}, \mu(x_{\sigma(n)}, \cdots,\\
	& x_{\sigma(2n-2)}, x_{2n-1}))\\
	=&\sum\limits_{\sigma \in Sh(n-1,1,n-2)}sgn(\sigma)\mu(\mu(x_{\sigma(1)}, \cdots, x_{\sigma(n)}), x_{\sigma(n+1)}, \cdots, x_{\sigma(2n-2)}, x_{2n-1})+\\
	&(-1)^{n-1}\sum\limits_{\sigma \in Sh(n-1,n-1)}sgn(\sigma)\mu(x_{\sigma(1)}, \cdots, x_{\sigma(n-1)}, \mu(x_{\sigma(n)}, \cdots, x_{\sigma(2n-2)}, x_{2n-1}))\\
	=&(\mu\circ\mu)(x_1,\cdots,x_{2n-1}),
	\end{align*}for any $x_1,\cdots,x_{2n-1} \in V$, $\mu\circ \mu=0$ if and only (\ref{eq28}) holds.
\end{proof}

A short calculation reveals that if $(V,\mu)$ is a pre Lie algebra in Definition \ref{pl}, $(V,\mu^{op})$ is a generalized pre-Lie algebra of order $n$ in \cite{pbg}.
\begin{remark}
	Note that Lie $n$-algebras and $n$-Lie algebras are two different $n$-generalizations of Lie algebras \cite{rs} and $n$-Lie algebras are special Lie $n$-algebras \cite{go}. Correspondingly, our pre Lie $n$-algebras (generalized pre-Lie algebras of order $n$ in \cite{pbg}) are different from $n$-pre Lie algebras in \cite{pbg} and $n$-pre Lie algebras are special pre Lie $n$-algebras.
\end{remark}
In the remainder of this section, we will explain an $n$-ary algebra  (associative, Lie or pre Lie) is a special corresponding homotopy algebra in Definition \ref{dal}.

Although an $n$-ary algebra's structure equations are analogous to that of a homotopy algebra in Definition \ref{dapl}, there are differences 
 in the signs of structure equations when we take in elements. Since signs are determined by the parity of relevant numbers, P. Hanlon and M. Wachs get around this dilemma by superspaces, i.e. bigraded vector space in \cite{hw}. But it is not applicable to general spaces. It seems that we can solve this problem simply by regarding $V$ as a graded vector space concentrated in cohomological degree 0. In fact, such a graded vector space can only be equipped with non-zero bilinear map $\mu_2$ because $|\mu_n|=0$ if and only if $n=2$. So we take another tack.

For any $n$-ary algebra $(V,\mu)$, we construct an associated homotopy algebra $(\overline{V}=\oplus_{n}\bar{V}^i,\{\mu_i\})$,  where
\begin{eqnarray}
\overline{V}^i=\begin{cases}
V,&\mbox{if }i=0,n-2 \mbox{ or }2n-4,\\
0,&\mbox{otherwise },
\end{cases}
&\mbox{ and }&\mu_i=0 \mbox{ unless } i=n.
\end{eqnarray}

Thus for any non-zero homogeneous element $(x_1,\cdots,x_n)\in\otimes^n\overline{V}$, the corresponding degree $|x_1|+\cdots+|x_n|=k(n-2)$ for some non-negative integer $k$. With the forgetful image $(x'_1,\cdots,x'_n)$ in $\otimes^nV$  , we can give the function of $\mu_n$ as follow.
\begin{eqnarray}\label{mn}
\mu_n(x_1,\cdots,x_n)=\begin{cases}
\mu(x'_1,\cdots,x'_n),&\mbox{if }|x_1\otimes\cdots\otimes x_n|=0 \mbox{ or }n-2,\\
0,&\mbox{otherwise }.
\end{cases}
\end{eqnarray}
Note that the equals sign in equation (\ref{mn}) means the values of two functions are equal and $\mu_n:\otimes^n\overline{V}\to\overline{V}$ is a homogeneous linear map of cohomological degree $n-2$. Now we can identify an $n$-ary algebra with a homotopy algebra in Definition \ref{dal}.
\begin{proposition}\label{ni}
	\begin{enumerate}
		\item $(V,\mu)$ is a partially associative $n$-algebra if and only if $(\overline{V},\{\mu_i\})$ is an $A_{\infty}$-algebra.
		\item $(V,\mu)$ is a pre Lie $n$-algebra if and only if $(\overline{V},\{\mu_i\})$ is a $PL_{\infty}$-algebra.
		\item $(V,\mu)$ is a Lie $n$-algebra if and only if $(\overline{V},\{\mu_i\})$ is an $L_{\infty}$-algebra.
	\end{enumerate}
\end{proposition}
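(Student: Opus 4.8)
The plan is to reduce each of the three equivalences to a single bookkeeping statement: that, under the degree assignment $\overline{V}^0=\overline{V}^{n-2}=\overline{V}^{2n-4}=V$ and $\mu_i=0$ for $i\neq n$, the full homotopy structure equation collapses to exactly one nontrivial relation, namely the one involving $\mu_n\circ(I_m\otimes\mu_n\otimes I_{n-1})$, evaluated on elements all of degree $0$. First I would observe that since only $\mu_n$ is nonzero, the only term that can survive in any of the quadratic identities of Definition \ref{dal} is the $i=j=n$ term, which forces $n+1=i+j=2n$, i.e. the identity for the index $N:=2n-1$; every other structure equation is vacuous. So the only content is
\begin{equation*}
\sum_{m=0}^{n-1}(-1)^{(n)(n-m-1)+m}\,\mu_n\circ(I_m\otimes\mu_n\otimes I_{n-1-m})=0
\end{equation*}
(and its symmetrized $PL_\infty$/$L_\infty$ variants), now read as an equality of maps $\otimes^{2n-1}\overline{V}\to\overline{V}$.

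Next I would analyze which inputs make this map nonzero. An element $x_1\otimes\cdots\otimes x_{2n-1}$ of $\otimes^{2n-1}\overline V$ is homogeneous only if each $x_k$ lies in one of the three graded pieces; the inner $\mu_n$ applied to $n$ of them is nonzero precisely when those $n$ arguments have total degree $0$ or $n-2$ by \eqref{mn}, i.e.\ (given the pieces available) when they are all in degree $0$, or when exactly one is in degree $n-2$ and the rest in degree $0$. The outer $\mu_n$ then eats the output of the inner $\mu_n$ (which sits in degree $n-2$ or $2n-4$ accordingly) together with $n-1$ of the remaining arguments, and is nonzero only when that total is again $0$ or $n-2$ — which pins everything down: all of $x_1,\dots,x_{2n-1}$ must lie in $\overline V^0=V$ (for then inner output is in degree $n-2$, outer input total is $n-2$, outer output in degree $2n-4$). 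So the identity has nontrivial content on exactly one graded component, all inputs in degree $0$, and there the Koszul signs $\epsilon(\sigma)$ are trivial, the sign $(-1)^{n|g|}$ in the tensor-product convention disappears, and the degree-$(n-2)$-version equation \eqref{pl1} literally becomes the ungraded defining relation \eqref{eq28} of a pre Lie $n$-algebra (respectively \eqref{m1} becomes partial associativity, and \eqref{l1} becomes the Lie $n$-algebra relation). The symmetry conditions $\mu_n=\mu_n\circ(\rho^{(2)}_\sigma\otimes I_1)$ for $\sigma\in\mathbb S_{n-1}$ likewise restrict to $\mu=\mu\circ(\rho_\sigma\otimes I_1)$ since again all signs are $+1$ in degree $0$, and are automatically satisfied on the other graded pieces because $\mu_n$ vanishes there for degree reasons.

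Concretely I would carry out the steps in this order: (1) record that $i=j=n$ is the only surviving term, so only the $(2n-1)$-st structure equation matters; (2) do the degree count above to show the equation is trivial unless all $2n-1$ inputs lie in $V=\overline V^0$; (3) on that component, check that the sign $(-1)^{j(i-m-1)+m}=(-1)^{n(n-m-1)+m}$ matches the sign $(-1)^{i(n-1)}=(-1)^{m(n-1)}$ appearing in \eqref{eq28} (this is the one genuine computation — verifying $n(n-m-1)+m\equiv m(n-1)\pmod 2$, which holds because $n(n-m-1)+m-m(n-1)=n(n-m-1)+m(2-n)=n(n-1)-nm-nm+2m\equiv n(n-1)\equiv 0$ as one of $n,n-1$ is even) and similarly match the combinatorial factors $\tfrac1{((n-1)!)^2}$, $\tfrac1{(n-1)!n!}$ with those in \eqref{pl1}, \eqref{l1}; (4) observe the symmetry axioms correspond on the nose; (5) conclude in all three cases. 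The main obstacle is step (3): making sure the parity of the homotopy-algebra sign $(-1)^{j(i-m-1)+m}$ agrees with the parity of the $n$-ary sign $(-1)^{i(n-1)}$ once the index substitution $i\mapsto m$, $j\mapsto n$ is made, and that the $\rho^{(2)}_{w}$-twist in Definition \ref{dal} produces exactly the $\rho_w$-twist of Definition \ref{pl} after the sign bookkeeping — everything else is a direct unwinding of definitions once the degree count has killed all the unwanted components.
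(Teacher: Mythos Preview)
The paper states Proposition~\ref{ni} without proof, so there is no argument to compare against; your outline is the natural one and is essentially correct. The reduction to the single structure equation with $i=j=n$, the degree count forcing all $2n-1$ inputs into $\overline V^{0}$, and the parity check $n(n-m-1)+m\equiv m(n-1)\pmod 2$ are all right.

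There is one small imprecision worth fixing. In step~(4) you say the symmetry condition $\mu_n=\mu_n\circ(\rho^{(2)}_\sigma\otimes I_1)$ is ``automatically satisfied on the other graded pieces because $\mu_n$ vanishes there for degree reasons.'' But $\mu_n$ does \emph{not} vanish on inputs of total degree $n-2$: by \eqref{mn} it is nonzero there as well (one input in $\overline V^{\,n-2}$, the rest in $\overline V^{0}$). So you must also verify symmetry on that component. Fortunately the same mechanism works: with at most one input of nonzero degree, every Koszul sign $\epsilon(\sigma)$ is still $+1$ (any transposition involves at least one degree-$0$ factor), hence $\rho^{(2)}_\sigma$ reduces to $\rho_\sigma$ and the graded symmetry condition for $\mu_n$ on that component is again literally the ungraded condition $\mu=\mu\circ(\rho_\sigma\otimes I_1)$. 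With that one-line patch your argument goes through in all three cases.
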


\section{Coalgebras, coderivations and   homotopy algebras}\label{s2}

In this section, we explain the equivalence of different forms of the same homotopy algebra in Section \ref{s1} and relate $PL_{\infty}$-algebras to coderivations of a coalgebra. Thus $V$ is always a graded vector space unless otherwise specified.

\subsection{Coalgebras and coalgebra maps between them}\label{s21}

Given a graded vector space $V$, there are three cofree objects on $V$ being of interest to us: the cofree coalgebra $T^*V$, the cofree commutative coalgebra $\wedge^*V$ and the cofree left Perm-coalgebra $P^*V$ in \cite{cl}. Next we present their graded structures and coalgebra structures.
\begin{enumerate}
	\item $T^*V:=\oplus_{n\geq1}(\otimes^nV)$ is a graded vector space equipped with a comultiplication map
	\begin{eqnarray}
	\Delta(x_1\otimes\cdots\otimes x_n)=\sum\limits_{i=1}^{n-1}(x_1\otimes\cdots\otimes x_i)\otimes(x_{i+1}\otimes\cdots\otimes x_n),
	\end{eqnarray}
	\item The comultiplication of $\wedge^*V:=\oplus_{n\geq1}(\wedge^nV)$ is given by
	\begin{eqnarray}
	\Delta(x_1\wedge\cdots\wedge x_n)=\sum\limits_{i=1}^{n-1}\sum\limits_{\sigma \in Sh(i,n-i)}\epsilon(\sigma)(x_{\sigma(1)}\wedge\cdots\wedge x_{\sigma(i)})\otimes(x_{\sigma(i+1)}\wedge\cdots\wedge x_{\sigma(n)}).
	\end{eqnarray}
	\item The $n$-part of $P^*V$ is denoted by $P^nV:=(\wedge^{n-1}V)\otimes V$. Its comultiplication is defined by
	\begin{align}
	&\Delta(x_1\wedge\cdots\wedge x_{n-1}\otimes x_n)\notag\\
	=&\sum\limits_{i=1}^{n-1}\sum\limits_{\sigma \in Sh(i-1,1,n-i-1)}\epsilon(\sigma)(x_{\sigma(1)}\wedge\cdots\wedge x_{\sigma(i-1)}\otimes x_{\sigma(i)})\otimes(x_{\sigma(i+1)}\wedge\cdots\wedge x_{\sigma(n-1)}\otimes x_n).
	\end{align}
\end{enumerate}

Expanding the coalgebra map in \cite{lam}, we get the following result.
\begin{lemma}\label{cd}
	There is a commutative diagram of coalgebras
	\begin{displaymath}
	\xymatrix{\wedge^*V\ar[rr]^{\hat{\alpha}}\ar[dr]^{\hat{\beta}}&&T^*V\\
	&P^*V\ar[ur]^{\hat{\gamma}}&}
	\end{displaymath} where $$\hat{\alpha}(x_1\wedge\cdots\wedge x_n):=\sum\limits_{\sigma \in \mathbb{S}_{n}}\epsilon(\sigma)x_{\sigma(1)}\otimes\cdots\otimes x_{\sigma(n)},$$
	$$\hat{\beta}(x_1\wedge\cdots\wedge x_n):=\sum\limits_{\sigma \in Sh(n-1,1)}\epsilon(\sigma)x_{\sigma(1)}\wedge\cdots\wedge x_{\sigma(n-1)}\otimes x_{\sigma(n)},$$ 
	$$\hat{\gamma}(x_1\wedge\cdots\wedge x_{n-1}\otimes x_n):=\sum\limits_{\sigma \in \mathbb{S}_{n-1}}\epsilon(\sigma)x_{\sigma(1)}\otimes\cdots\otimes x_{\sigma(n-1)}\otimes x_n$$ are injective coalgebra maps.
\end{lemma}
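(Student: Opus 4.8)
The plan is, for each of the three maps, to check in turn that it is well defined on the relevant space of (co)invariants, that it intertwines the two comultiplications, and that it is injective, and then to verify the triangle $\hat{\gamma}\circ\hat{\beta}=\hat{\alpha}$. Some of this bootstraps: if $\hat{\beta}$ and $\hat{\gamma}$ are coalgebra maps then so is their composite $\hat{\gamma}\circ\hat{\beta}$, and once the identity $\hat{\gamma}\circ\hat{\beta}=\hat{\alpha}$ is known together with injectivity of $\hat{\gamma}$, the well-definedness and injectivity of $\hat{\beta}$ follow from the corresponding facts for $\hat{\alpha}$. So the genuinely independent work is: well-definedness and injectivity of $\hat{\alpha}$ and $\hat{\gamma}$, the coalgebra-map property for $\hat{\beta}$ and $\hat{\gamma}$, and the identity $\hat{\gamma}\circ\hat{\beta}=\hat{\alpha}$.

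For well-definedness I would use the description $\wedge^nV=(\otimes^nV)_{\mathbb{S}_n}$ recalled in Section \ref{s1}, together with the observation that $\hat{\alpha}(x_1\wedge\cdots\wedge x_n)=\sum_{\sigma\in\mathbb{S}_n}\rho^{(1)}_\sigma(x_1\otimes\cdots\otimes x_n)$ is the action of the integral $w_n$; since $\tau w_n=w_n$ for every $\tau\in\mathbb{S}_n$, this does not depend on the chosen representative. Likewise $\hat{\gamma}$ is $(\rho^{(1)}_{w_{n-1}}\otimes I_1)$ precomposed with the canonical quotient $\otimes^nV\to(\wedge^{n-1}V)\otimes V$, which is well defined because $w_{n-1}$ is $\mathbb{S}_{n-1}$-invariant, and $\hat{\beta}$ may be handled the same way (or deduced afterwards as above). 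Injectivity of $\hat{\alpha}$ and of $\hat{\gamma}$ I would obtain by exhibiting explicit left inverses: the canonical projections $\otimes^nV\to\wedge^nV$ and $\otimes^nV\to(\wedge^{n-1}V)\otimes V$ compose with $\hat{\alpha}$ and $\hat{\gamma}$ to give multiplication by $n!$ and by $(n-1)!$ respectively, using only that $\epsilon(\sigma)^2=1$ and $\mathrm{char}\,\mathbb{K}=0$; similarly the projection $P^nV\to\wedge^nV$ after $\hat{\beta}$ gives multiplication by $n$.

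The coalgebra-map property is the heart of the matter. For $\hat{\alpha}$ one compares $\Delta_{T^*V}\circ\hat{\alpha}$ and $(\hat{\alpha}\otimes\hat{\alpha})\circ\Delta_{\wedge^*V}$ on $x_1\wedge\cdots\wedge x_n$: the left-hand side is $\sum_{i=1}^{n-1}\sum_{\sigma\in\mathbb{S}_n}\epsilon(\sigma)(x_{\sigma(1)}\otimes\cdots\otimes x_{\sigma(i)})\otimes(x_{\sigma(i+1)}\otimes\cdots\otimes x_{\sigma(n)})$, whereas on the right-hand side, for each split point $i$ the summation ranges over $Sh(i,n-i)\times\mathbb{S}_i\times\mathbb{S}_{n-i}$; the two expressions agree because the multiplication map $Sh(i,n-i)\times\mathbb{S}_i\times\mathbb{S}_{n-i}\to\mathbb{S}_n$ is a bijection and the Koszul sign is multiplicative along such factorizations by Remark \ref{rm1} (all three maps are of degree $0$, so the tensor sign rule introduces nothing). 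For $\hat{\gamma}$ and $\hat{\beta}$ the argument is identical except that the comultiplications of $P^*V$ and the map $\wedge^*V\to P^*V$ always keep the distinguished last tensor factor on the right, so one uses the bijections $Sh(i-1,1,n-i-1)\times\mathbb{S}_{i-1}\times\mathbb{S}_{n-i-1}\to\mathbb{S}_{n-1}$ and, for the triangle, $Sh(n-1,1)\times\mathbb{S}_{n-1}\to\mathbb{S}_n$. Finally, for $\hat{\gamma}\circ\hat{\beta}=\hat{\alpha}$ one substitutes the formula for $\hat{\beta}$ into $\hat{\gamma}$ and reindexes via this last bijection: every $\rho\in\mathbb{S}_n$ is uniquely a product of a $Sh(n-1,1)$-unshuffle $\sigma$ (namely the one with $\sigma(n)=\rho(n)$) and a permutation fixing $n$, and collapsing the signs by Remark \ref{rm1} recovers $\sum_{\rho\in\mathbb{S}_n}\epsilon(\rho)x_{\rho(1)}\otimes\cdots\otimes x_{\rho(n)}=\hat{\alpha}(x_1\wedge\cdots\wedge x_n)$.

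The step I expect to be the main obstacle is the bookkeeping of Koszul signs in the coalgebra-map verifications — checking that for $\sigma\in\mathbb{S}_n$ the sign $\epsilon(\sigma;x_1,\ldots,x_n)$ factors exactly as the product of the shuffle sign and the Koszul signs of the two block permutations, applied to the correctly permuted subsequences of $x_1,\ldots,x_n$. Remark \ref{rm1} is precisely the tool for this, but it has to be invoked on the right sequences; once that is organised, everything else is routine combinatorics of (un)shuffles.
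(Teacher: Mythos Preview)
Your proposal is correct and follows essentially the same approach as the paper: identify the maps with the action of the integrals $w_n$ (so well-definedness is automatic), verify the triangle $\hat{\gamma}\circ\hat{\beta}=\hat{\alpha}$ via the factorisation $\mathbb{S}_n\cong Sh(n-1,1)\times\mathbb{S}_{n-1}$ together with Remark~\ref{rm1}, check the coalgebra-map property for $\hat{\alpha}$ by the bijection $Sh(i,n-i)\times\mathbb{S}_i\times\mathbb{S}_{n-i}\to\mathbb{S}_n$, and obtain injectivity from an explicit left inverse $\pi$ built out of the canonical projections. The paper is simply terser (it writes out only $\hat{\alpha}$ and says ``similarly'' for $\hat{\beta},\hat{\gamma}$), whereas your bootstrapping remark that injectivity and well-definedness of $\hat{\beta}$ follow from those of $\hat{\alpha}$ and $\hat{\gamma}$ once the triangle commutes is a small organisational nicety, not a different argument.
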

\begin{proof} Since  $w_n$ is an integral of $\mathbb{KS}_n$, $\hat{\alpha}=\sum\limits_{n\geq1}\rho_{w_{n}}^{(1)}$ and $\hat{\gamma}=\sum\limits_{n\geq1}\rho_{w_{n-1}}^{(1)}\otimes I_1$. Similarly, we can prove that  $\hat{\beta}=\sum\limits_{n\geq 1,\sigma \in Sh(n-1,1)}\rho_{\sigma}^{(1)}$.
Then we show that  $\hat{\gamma}\hat{\beta}=\hat{\alpha}$ according to the discussion in Remark \ref{rm1}. For any $x_1\wedge\cdots\wedge x_n\in \wedge^nV$,
	\begin{align*}
	&(\hat{\gamma}\hat{\beta})(x_1\wedge\cdots\wedge x_n)\\
	=&\sum\limits_{\sigma \in Sh(n-1,1)}\sum\limits_{\tau \in \mathbb{S}_{n-1}}\epsilon(\sigma)\epsilon(\tau;x_{\sigma(1)},\cdots,x_{\sigma(n-1)})x_{\tau\sigma(1)}\otimes\cdots\otimes x_{\tau\sigma(n-1)}\otimes x_{\sigma(n)}\\
	=&\sum\limits_{\sigma \in Sh(n-1,1)}\sum\limits_{\substack{\tau \in \mathbb{S}_{n}\\\tau(n)=n}}\epsilon(\sigma)\epsilon(\tau;x_{\sigma(1)},\cdots,x_{\sigma(n)})x_{\tau\sigma(1)}\otimes\cdots\otimes x_{\tau\sigma(n)}\\
	=&\sum\limits_{\sigma \in \mathbb{S}_n}\epsilon(\sigma)x_{\sigma(1)}\otimes\cdots\otimes x_{\sigma(n)}\\
	=&\hat{\alpha}(x_1\wedge\cdots\wedge x_n).
	\end{align*}
	
Next we show that  $\hat{\alpha},\hat{\beta},\hat{\gamma}$ are  injective coalgebra maps.  Take $\hat{\alpha}$ as an example.  We need to verify that $(\hat{\alpha}\otimes\hat{\alpha})\Delta=\Delta\hat{\alpha}$. Let $x_i\in V$ be homogeneous elements. Then
	\begin{align*}
	&((\hat{\alpha}\otimes\hat{\alpha})\Delta)(x_1\wedge\cdots\wedge x_n)\\
	=&\sum\limits_{i=0}^{n}\sum\limits_{\sigma \in Sh(i,n-i)}\epsilon(\sigma)\hat{\alpha}(x_{\sigma(1)}\wedge\cdots\wedge x_{\sigma(i)})\otimes\hat{\alpha}(x_{\sigma(i+1)}\wedge\cdots\wedge x_{\sigma(n)})\\
	=&\sum\limits_{i=0}^{n}\sum\limits_{\sigma \in \mathbb{S}_n}\epsilon(\sigma)(x_{\sigma(1)}\otimes\cdots\otimes x_{\sigma(i)})\otimes(x_{\sigma(i+1)}\otimes\cdots\otimes x_{\sigma(n)})\\
	=&(\Delta\hat{\alpha})(x_1\wedge\cdots\wedge x_n).
	\end{align*}
Denote the canonical epimorphism $\pi_n:\otimes^nV\to\wedge^nV,x_1\otimes\cdots\otimes x_n\mapsto x_1\wedge\cdots\wedge x_n$, and $\pi:=\sum\limits_{n\geq1}\frac{1}{n!}\pi_n$. A short calculation reveals that $\pi\hat{\alpha}=Id_{\wedge^*V}$, which means $\hat{\alpha}$ is injective.

Similarly, we can prove that $\hat{\beta},\hat{\gamma}$ are injective coalgebra maps.
\end{proof}

\subsection{Equivalent definitions of   homotopy algebras}\label{s32}
From Section 2, we know that there are two differential definitions of $A_{\infty}$ and $L_{\infty}$ algebras in \cite{j,k} and \cite{laj,lam}. Next, we will show that these two different definitions of these two kinds homotopy algebras are  equivalent. These homotopy algebras are characterized through coderivations of  some coalgebras. Similarly,  we can be obtained the same conclusion for $PL_{\infty}$-algebras. To avoid confusion, we use  pure letter with subscript like $\mathfrak{a}_n$ to mean a map of degree $n-2$, letter with a hat like $\hat{\mathfrak{b}}_n$ to mean a map of degree $-1$ and letter with a tilde like $\tilde{\mathfrak{c}}_n$  to mean a coderivation. Furthermore, maps denoted by $\mathfrak{a}_n$ and $\hat{\mathfrak{a}}_n$ are in one-to-one correspondence, and so are $\hat{\mathfrak{a}}_n$ and $\tilde{\mathfrak{a}}$. 

For a graded vector space $V$, its suspension is denoted by $sV$, i.e. $(sV)^i=V^{i-1}$. In \cite{laj}, T. Lada and J. Stasheff present a bijection between the families of maps $\mu_n:\otimes^nV\to V$ of degree $n-2$ and maps $\hat{\mu}_n:\otimes^n(sV)\to sV$ of degree $-1$. Namely,
\begin{align}
\hat{\mu}_n(sx_1,\cdots,sx_n)=\begin{cases}
(-1)^{\sum\limits_{i=1}^{n/2}|x_{2i-1}|}s\mu_n(x_1,\cdots,x_n),&\mbox{ if $n$ is even,}\\
-(-1)^{\sum\limits_{i=1}^{(n-1)/2}|x_{2i}|}s\mu_n(x_1,\cdots,x_n),&\mbox{ if $n$ is odd.}
\end{cases}
\end{align}
Recall that a linear map $f:C\to C$ is a coderivation of a coalgebra $C$ if $$\Delta_Cf=(f\otimes Id_C+Id_C\otimes f)\Delta_C,$$where $\Delta_C$ is the comultiplication of the coalgebra $C$. A collection of maps $\{\hat{\mu}_n:\otimes^nV\to V\}$ of degree $-1$ can be uniquely extended as a coderivation $\tilde{\mu}:T^*V\to T^*V$ where the component $\tilde{\mu}:\otimes^kV\to \otimes^lV$ is defined as $$\sum\limits_{i=0}^{l-1}I_i\otimes\hat{\mu}_{k-l+1}\otimes I_{l-i-1}.$$ Using this, one can prove that  three equivalent descriptions of $A_{\infty}$-algebras.
\begin{lemma}\label{tm}\cite{k}
	For a graded vector space $V$ the following statements are equivalent:
	\begin{enumerate}
		\item $(V,\{\mathfrak{m}_n\})$ satisfies equations (\ref{m1});
		\item $(sV,\{\hat{\mathfrak{m}}_n\})$ satisfies equations (\ref{m2});
		\item $\tilde{\mathfrak{m}}$ is a coderivation of coalgebra $T^*(sV)$ of degree $-1$ such that $\tilde{\mathfrak{m}}^2=0$.
	\end{enumerate}
\end{lemma}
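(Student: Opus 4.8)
The plan is to prove the two equivalences $(1)\Leftrightarrow(2)$ and $(2)\Leftrightarrow(3)$ separately: the first is a matter of sign bookkeeping through the suspension, the second is formal once one invokes the universal property of the cofree coalgebra $T^{*}(sV)$. For $(1)\Leftrightarrow(2)$ I would start from the Lada--Stasheff dictionary relating the degree $n-2$ maps $\mathfrak{m}_{n}$ with the degree $-1$ maps $\hat{\mathfrak{m}}_{n}$ through the suspension $s\colon V\to sV$ recalled above, and simply translate one side of the structure equations into the other. Concretely, one evaluates $\hat{\mathfrak{m}}_{i}\circ(I_{m}\otimes\hat{\mathfrak{m}}_{j}\otimes I_{i-m-1})$ on a homogeneous tensor $sx_{1}\otimes\cdots\otimes sx_{n}$ with $i+j=n+1$ and pushes every copy of $s$ to the outside, using the Koszul rule $(f\otimes g)(x\otimes y)=(-1)^{|x||g|}f(x)\otimes g(y)$ together with $|s|=1$ and $|\hat{\mathfrak{m}}_{k}|=-1$. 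The accumulated sign should collapse to $(-1)^{j(i-m-1)+m}$ times $s\bigl(\mathfrak{m}_{i}\circ(I_{m}\otimes\mathfrak{m}_{j}\otimes I_{i-m-1})(x_{1},\dots,x_{n})\bigr)$, the remaining position-dependent parities being precisely those already absorbed in the dictionary. Since the dictionary is a bijection between families $\{\mathfrak{m}_{n}\}$ and $\{\hat{\mathfrak{m}}_{n}\}$, summing over $i+j=n+1$ and $0\le m\le i-1$ then shows that $(\ref{m1})$ holds for all $n$ if and only if $(\ref{m2})$ does.

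For $(2)\Leftrightarrow(3)$, write $W=sV$. The cofree coalgebra $T^{*}W=\bigoplus_{n\ge1}\otimes^{n}W$ has the property that a coderivation of it is determined by its corestriction to the cogenerators, i.e. by the composite with $\mathrm{pr}_{W}\colon T^{*}W\to W$, and conversely every graded map $T^{*}W\to W$ extends uniquely to a coderivation; for the family $\{\hat{\mathfrak{m}}_{n}\}$ this extension is exactly the $\tilde{\mathfrak{m}}$ with components $\sum_{i}I_{i}\otimes\hat{\mathfrak{m}}_{k-l+1}\otimes I_{l-i-1}$ recalled above. Because the graded commutator of two coderivations is again a coderivation, $\tilde{\mathfrak{m}}^{2}=\tfrac{1}{2}[\tilde{\mathfrak{m}},\tilde{\mathfrak{m}}]$ (here $\tilde{\mathfrak{m}}$ is odd) is a coderivation of $T^{*}W$ of degree $-2$; hence $\tilde{\mathfrak{m}}^{2}=0$ if and only if $\mathrm{pr}_{W}\circ\tilde{\mathfrak{m}}^{2}=0$. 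The last step is to compute the corestriction of $\tilde{\mathfrak{m}}^{2}$ on $\otimes^{n}W$: in the composite $\tilde{\mathfrak{m}}\circ\tilde{\mathfrak{m}}$ the only summands surviving the projection to $W$ are those factoring as $\hat{\mathfrak{m}}_{i}\circ(I_{m}\otimes\hat{\mathfrak{m}}_{j}\otimes I_{i-m-1})$ with $i+j=n+1$ and $0\le m\le i-1$, and their sum is exactly the left-hand side of $(\ref{m2})$ for that $n$. Therefore $\tilde{\mathfrak{m}}^{2}=0$ if and only if $(\ref{m2})$ holds, which is $(3)\Leftrightarrow(2)$.

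The step I expect to be the \textbf{main obstacle} is the sign computation in $(1)\Leftrightarrow(2)$: extracting precisely the factor $(-1)^{j(i-m-1)+m}$ and checking that the position-dependent Koszul signs created by commuting the suspensions through the various tensor slots cancel between the two sides. I would organise this by verifying the identity on a single generic input, tracking parities slot by slot, and splitting into the cases ``$n$ even'' and ``$n$ odd'' as forced by the dictionary. By contrast, $(2)\Leftrightarrow(3)$ is purely formal once one grants the universal property of $T^{*}W$ and the fact that the square of an odd coderivation is a coderivation; the same pattern of argument will later apply verbatim to $PL_{\infty}$-algebras with $T^{*}(sV)$ replaced by $P^{*}(sV)$.
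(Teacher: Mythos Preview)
The paper does not actually prove this lemma; it is quoted from Keller \cite{k} without argument. Your sketch is correct and is the standard proof.

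If one compares your outline to the paper's proof of the analogous Theorem~\ref{tpl} for $PL_{\infty}$-algebras, the strategy for $(1)\Leftrightarrow(2)$ is identical: track the suspension signs term by term, exactly as in Lemma~\ref{38} and Proposition~\ref{p1}. For $(2)\Leftrightarrow(3)$ there is a genuine difference of method. You argue that $\tilde{\mathfrak{m}}^{2}=\tfrac{1}{2}[\tilde{\mathfrak{m}},\tilde{\mathfrak{m}}]$ is again a coderivation and therefore vanishes iff its corestriction to $W$ does; this is the clean conceptual route and works immediately for $T^{*}W$ because its cofree property is standard. The paper, in Proposition~\ref{p2}, instead computes every component $P^{k}V\to P^{k-n+1}V$ of $\tilde{\mathfrak{q}}^{2}$ by hand and shows each is built from the corestrictions $\hat{\mathfrak{Q}}_{n}$. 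Your approach is shorter; the paper's has the advantage that it does not presuppose the cofreeness of $P^{*}V$ (which is less standard than that of $T^{*}V$) and so dovetails with the explicit verification in Proposition~\ref{p}(2) that the given formula really is a coderivation. For the $A_{\infty}$ case your shortcut is entirely adequate.
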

Let $(V,\{\mathfrak{l}_n\})$ be an $L_{\infty}$-algebras in the sense of  Definition \ref{dal}. Then  $(sV,\{\hat{\mathfrak{l}}_n\})$ is proved to be an $L_{\infty}$-algebras in the sense of Definition \ref{dapl} (see \cite{laj}). As is discussed in Section 2, $\hat{\mathfrak{l}}_n$ can be seen as a linear map from $\wedge^n(sV)$ to $sV$. Note that a collection of maps $\{\hat{\mu}_n:\wedge^nV\to V\}$ of degree $-1$ is in one-to-one correspondence with a coderivation $\tilde{\mu}:\wedge^*V\to \wedge^*V$ where the component $\tilde{\mu}:\wedge^kV\to \wedge^lV$ is defined as $$\sum\limits_{i=0}^{l-1}\frac{1}{(l-1)!(k-l+1)!}(I_i\wedge\hat{\mu}_{k-l+1}\wedge I_{l-i-1})\circ\rho_{w_{k}}^{(1)}.$$ Then we can restate results in \cite{laj} in the following way. 
\begin{lemma}
	For a graded vector space $V$  the following statements are equivalent:
	\begin{enumerate}
		\item $(V,\{\mathfrak{l}_n\})$ satisfies equations (\ref{l1});
		\item $(sV,\{\hat{\mathfrak{l}}_n\})$ satisfies equations (\ref{l2});
		\item $\tilde{\mathfrak{l}}$ is a coderivation of coalgebra $\wedge^*(sV)$ of degree $-1$ such that $\tilde{\mathfrak{l}}^2=0$.
	\end{enumerate}
\end{lemma}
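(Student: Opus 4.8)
The plan is to follow the template of Lemma \ref{tm}, replacing the cofree coassociative coalgebra $T^*(sV)$ by the cofree cocommutative coalgebra $\wedge^*(sV)$, and to treat the equivalences $(1)\Leftrightarrow(2)$ and $(2)\Leftrightarrow(3)$ separately.

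\emph{Step 1: $(1)\Leftrightarrow(2)$.} This is the Lada--Stasheff suspension correspondence. The degree-$(n-2)$ maps $\mathfrak{l}_n$ and the degree-$(-1)$ maps $\hat{\mathfrak{l}}_n$ are related by the explicit sign formula recalled just before Lemma \ref{tm}. Conjugation by $s^{\otimes n}$ intertwines the action $\rho^{(2)}_\sigma$ of Definition \ref{dal} with the action $\rho^{(1)}_\sigma$ of Definition \ref{dapl}, so the symmetry conditions in (\ref{l1}) and (\ref{l2}) correspond; substituting the sign formula into the quadratic relation of (\ref{l1}) and collecting Koszul signs turns it termwise into the quadratic relation of (\ref{l2}). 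This computation is exactly the one performed in \cite{laj}, so I would invoke it, indicating only where the signs $(-1)^{j(i-m-1)+m}$ are absorbed into the suspension signs.

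\emph{Step 2: $(2)\Leftrightarrow(3)$.} Here I would use the universal property of $\wedge^*(sV)$ recalled immediately before the lemma: a family $\{\hat{\mathfrak{l}}_n:\wedge^n(sV)\to sV\}$ of degree $-1$ is in bijection with a degree-$(-1)$ coderivation $\tilde{\mathfrak{l}}$ of $\wedge^*(sV)$, the component $\wedge^k(sV)\to\wedge^l(sV)$ being $\sum_{i=0}^{l-1}\tfrac{1}{(l-1)!(k-l+1)!}(I_i\wedge\hat{\mathfrak{l}}_{k-l+1}\wedge I_{l-i-1})\circ\rho^{(1)}_{w_k}$. Since $|\tilde{\mathfrak{l}}|=-1$ is odd, $\tilde{\mathfrak{l}}^2=\tfrac12[\tilde{\mathfrak{l}},\tilde{\mathfrak{l}}]$ is again a coderivation (of degree $-2$), because the graded commutator of coderivations is a coderivation. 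A coderivation of $\wedge^*(sV)$ is determined by its corestriction to the cogenerators $\wedge^1(sV)=sV$, so $\tilde{\mathfrak{l}}^2=0$ if and only if $\mathrm{pr}_1\circ\tilde{\mathfrak{l}}^2$ vanishes on every $\wedge^n(sV)$. It then remains to compute this corestriction $\mathrm{pr}_1\circ\tilde{\mathfrak{l}}^2:\wedge^n(sV)\to sV$, which by the explicit formula above is a sum, over the intermediate arities, of composites $\hat{\mathfrak{l}}_i\circ(I_m\otimes\hat{\mathfrak{l}}_j\otimes I_{i-m-1})\circ(\text{symmetrizer})$ with $i+j=n+1$; the factorials in the coderivation formula, together with the collapse of the two composed symmetrizers, yield exactly the coefficients $\tfrac1{(i-1)!\,j!}$ and the single symmetrizer $\rho^{(1)}_{w_{i+j-1}}$ of (\ref{l2}). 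Hence $\mathrm{pr}_1\circ\tilde{\mathfrak{l}}^2$ is, up to a nonzero scalar, the left-hand side of the quadratic relation in (\ref{l2}), and the equivalence follows.

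\emph{Main obstacle.} The delicate point is the last sign-and-coefficient bookkeeping in Step 2: two symmetrized insertion operators are composed, each already carrying an idempotent-up-to-scalar symmetrizer $w_k$, and one must check that there is no over- or under-counting, and that the Koszul signs produced by moving $\hat{\mathfrak{l}}_j$ past the first $i-1$ tensor slots agree with those implicit in (\ref{l2}). I would dispose of this by first replacing each $\rho^{(1)}_{w_k}$ by an unshuffle sum, exactly as in the computation in the proof of the pre Lie $n$-algebra lemma above and as in Remark \ref{rapl}(1), so that every term appears once; once all symmetrizers have been resolved into $(i-1,j)$- and $(i-1,1,j-1)$-unshuffles the comparison with the unshuffle form of the $L_\infty$-relation is term by term, and the only genuine subtlety left is the degree-shift sign, which is already under control from Step 1.
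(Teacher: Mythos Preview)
Your outline is correct and is essentially the standard argument. Note, however, that the paper does not actually prove this lemma: the sentence preceding it says ``Then we can restate results in \cite{laj} in the following way,'' and the lemma is simply attributed to Lada--Stasheff with no proof supplied. So there is no proof in the paper to compare against; your Step 1 is exactly the citation the paper makes, and your Step 2 is the well-known cofree-coalgebra argument.

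One small remark on method: in Step 2 you invoke the fact that $\tilde{\mathfrak{l}}^2=\tfrac12[\tilde{\mathfrak{l}},\tilde{\mathfrak{l}}]$ is again a coderivation and hence determined by its corestriction to $sV$. This is cleaner than what the paper does in the parallel $PL_\infty$ case (Proposition \ref{p2}), where the authors compute every component $P^kV\to P^{k-n+1}V$ of $\tilde{\mathfrak{q}}^2$ explicitly and show the cross-terms cancel. Your shortcut is legitimate for $\wedge^*(sV)$ because it is genuinely cofree cocommutative, so a coderivation vanishes iff its corestriction does; the paper's more laborious route for $P^*(sV)$ is presumably chosen because the cofreeness of the Perm-coalgebra is less standard. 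Either way, for the $L_\infty$ statement your argument is complete as sketched, and the ``main obstacle'' you flag (the coefficient bookkeeping) is indeed the only place requiring care.
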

Since $PL_{\infty}$-algebras have equal status with $A_{\infty}$-algebras and $L_{\infty}$-algebras, we naturally consider similar equivalent characterizations of $PL_{\infty}$-algebras. For a $PL_{\infty}$-algebra $(V,\{\mathfrak{p}_n\})$ in Definition \ref{dal}, we can get an induced coderivation of coalgebra $P^*(sV)$ of degree $-1$. As a preliminary, we first show
\begin{lemma}\label{Per}For any $\sigma\in\mathbb{S}_{n-1}$, we have the following
 \begin{eqnarray}\label{per}
		(-1)^{\sum\limits_{i=1}^{n/2}|x_{\sigma(2i-1)}|}sgn(\sigma)\epsilon(\sigma;x_1,\cdots,x_{n-1})=(-1)^{\sum\limits_{i=1}^{n/2}|x_{2i-1}|}\epsilon(\sigma;sx_1,\cdots,sx_{n-1})
		\end{eqnarray} if $n$ is even,  and 
		\begin{eqnarray}\label{per1}
		(-1)^{\sum\limits_{i=1}^{(n-1)/2}|x_{\sigma(2i)}|}sgn(\sigma)\epsilon(\sigma;x_1,\cdots,x_{n-1})=(-1)^{\sum\limits_{i=1}^{(n-1)/2}|x_{2i}|}\epsilon(\sigma;sx_1,\cdots,sx_{n-1}), 
		\end{eqnarray}if $n$ is odd.
\end{lemma}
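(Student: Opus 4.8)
The plan is to reduce both identities to a single computation about how the Koszul sign behaves under suspension, and then to track the extra sign factors coming from the parity–of–position conventions in the bijection $\mu_n\leftrightarrow\hat\mu_n$. First I would recall the standard fact that for homogeneous $x_1,\dots,x_m$ one has $\epsilon(\sigma;sx_1,\dots,sx_m)=\operatorname{sgn}(\sigma)\,\epsilon(\sigma;x_1,\dots,x_m)\cdot(\pm1)$, where the correction sign depends only on $\sigma$ and the parities $|x_i|$; more precisely, desuspending each slot changes the degree of $x_i$ to $|x_i|+1$, so $\epsilon(\sigma;sx_1,\dots,sx_m)=\epsilon(\sigma;x_1,\dots,x_m)\cdot\prod_{\substack{i<j\\ \sigma^{-1}(i)>\sigma^{-1}(j)}}(-1)^{(|x_i|+1)(|x_j|+1)-|x_i||x_j|}$, and the product of the ``pure'' $(-1)$ contributions over inversions is exactly $\operatorname{sgn}(\sigma)$ times a product of $(-1)^{|x_i|}$ and $(-1)^{|x_j|}$ terms. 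It is cleanest to prove this, and the two target identities, by induction on the length of $\sigma$, checking the case of an adjacent transposition $\tau=(k,k+1)\in\mathbb{S}_{n-1}$ and then invoking Remark~\ref{rm1} to propagate to general $\sigma$.

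So the second step is the base case. For $\tau=(k,k+1)$ we have $\operatorname{sgn}(\tau)=-1$ and $\epsilon(\tau;x_1,\dots,x_{n-1})=(-1)^{|x_k||x_{k+1}|}$, while $\epsilon(\tau;sx_1,\dots,sx_{n-1})=(-1)^{(|x_k|+1)(|x_{k+1}|+1)}=(-1)^{|x_k||x_{k+1}|+|x_k|+|x_{k+1}|+1}$. Plugging these into \eqref{per} and \eqref{per1}, the identity to check is that the parity of $\sum_{i} |x_{\tau(2i-1)}|$ (resp. $\sum_i |x_{\tau(2i)}|$) differs from that of $\sum_i |x_{2i-1}|$ (resp. $\sum_i|x_{2i}|$) by exactly $|x_k|+|x_{k+1}|+1+1=|x_k|+|x_{k+1}|$ modulo $2$ --- that is, by $|x_k|+|x_{k+1}|$ --- since the two explicit $(-1)^{\dots}$ factors on either side, together with $\operatorname{sgn}(\tau)=-1$, must cancel against the suspension correction. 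One then observes that swapping positions $k$ and $k+1$ moves $x_k$ from an odd-indexed slot to an even-indexed slot (or vice versa) and does the same for $x_{k+1}$, so the odd-position sum changes precisely by $|x_k|+|x_{k+1}|$ if $k$ is odd and is unchanged if $k$ is even; a short case split on the parity of $k$, done in parallel for the even-position sum, verifies the equality in all cases. This is the step I expect to be the main obstacle: it is not deep, but the bookkeeping of which slot parity each of $x_k,x_{k+1}$ lands in, combined with keeping the ``$n$ even'' and ``$n$ odd'' branches straight, is where a sign error is easiest to make, so I would write out the $k$ even / $k$ odd cases explicitly.

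For the inductive step, write $\sigma=\tau\sigma'$ with $\tau$ an adjacent transposition and $\sigma'$ shorter. Apply the cocycle relation of Remark~\ref{rm1}, $\epsilon(\sigma;x_1,\dots,x_{n-1})=\epsilon(\tau;x_1,\dots,x_{n-1})\,\epsilon(\sigma';x_{\tau(1)},\dots,x_{\tau(n-1)})$, and the analogous relation for the suspended variables and for $\operatorname{sgn}$; then the base case handles the $\tau$ factor and the induction hypothesis (applied to $\sigma'$ acting on the reindexed family $x_{\tau(1)},\dots,x_{\tau(n-1)}$) handles the rest, the two combining because the ``explicit'' sign factors $(-1)^{\sum|x_{\sigma(2i-1)}|}$ telescope correctly through the composition. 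Finally I would remark that \eqref{per1} for odd $n$ is genuinely a separate statement rather than a corollary of \eqref{per}, because the parity-of-position convention singles out odd slots in one case and even slots in the other; but the argument is word-for-word the same with ``odd'' and ``even'' interchanged, so I would present \eqref{per} in detail and indicate that \eqref{per1} follows mutatis mutandis.
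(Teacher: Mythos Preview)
Your overall strategy---induction on the length of $\sigma$, base case an adjacent transposition, inductive step via the cocycle relation of Remark~\ref{rm1}---is exactly the route the paper takes, and your inductive step is correct. There is, however, a concrete error in your base-case analysis.

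You claim that for $\tau=(k,k+1)$ the odd-position sum $\sum_i|x_{\tau(2i-1)}|$ ``changes precisely by $|x_k|+|x_{k+1}|$ if $k$ is odd and is unchanged if $k$ is even.'' The second half is false. Exactly one of $k,k+1$ is an odd index; if $k$ is odd, the odd slot $k$ now holds $x_{k+1}$ instead of $x_k$, while if $k$ is even, the odd slot $k+1$ now holds $x_k$ instead of $x_{k+1}$. In \emph{both} cases the sum over odd positions changes by $\pm(|x_k|-|x_{k+1}|)\equiv|x_k|+|x_{k+1}|\pmod 2$. With your stated version, the identity would fail for $k$ even (you would be asserting $0\equiv|x_k|+|x_{k+1}|$), so this is not merely cosmetic.

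The upshot is that no parity split on $k$ is needed at all: the paper simply records
\[
(-1)^{\sum_i|x_{\tau(2i-1)}|}=(-1)^{\big(\sum_i|x_{2i-1}|\big)+|x_k|+|x_{k+1}|}
\]
uniformly, and then the base case reduces to the one-line check
\[
(-1)^{1+|x_k|+|x_{k+1}|+|x_k||x_{k+1}|}=(-1)^{(1+|x_k|)(1+|x_{k+1}|)}=\epsilon(\tau;sx_1,\dots,sx_{n-1}).
\]
Once you correct this, your argument coincides with the paper's. Your opening formula for $\epsilon(\sigma;sx_\bullet)$ in terms of a product over inversions is not needed and can be dropped.
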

\begin{proof} In \cite{laj}, T. Lada and J. Stasheff illustrate this lemma by the example of $\sigma=(j,j+1)$ for some integer $j$. To facilitate readers, we present a complete proof here.
	
	 Assume that $n$ is even. Then $\sigma$ can be uniquely decomposed into a product of different transpositions which are in the form of $(k,k+1)$ by the following procedures.
	
	Suppose $\sigma$ is not the identity $(1)$ of $\mathbb{S}_{n-1}$. Let $j$ be the largest integer such that $\sigma(j)\neq j$. Then there is an integer $i$ smaller than $j$ such that $\sigma(i)=j$. Define $\sigma_1:=\sigma(i,i+1)(i+1,i+2)\cdots(j-1,j)$. If $\sigma_1=(1)$, we have $\sigma=(j-1,j)(j-2,j-1)\cdots(i,i+1)$. If $\sigma_1\neq(1)$, the largest integer $j_1$ such that $\sigma_1(j_1)\neq j_1$ is smaller than $j$. This process is repeated till we get the identity permutation. 
	
	We use $|\sigma|$ to refer to the number of transpositions in above discomposition of $\sigma$ and $|(1)|:=0$. Thus we can derive equation (\ref{per}) by induction on $|\sigma|$.
	
	The conclusion is obvious when $|\sigma|=0$. Suppose $\sigma$ is a non-identity permutation. If $|\sigma|=1$, $\sigma=(i,i+1)$ for some $i$.
	At this point we have 
	\begin{align*}
	\epsilon(\sigma;x_1,\cdots,x_{n-1})&=(-1)^{|x_i||x_{i+1}|},\\
	sgn(\sigma)&=-1,\\
	(-1)^{\sum\limits_{i=1}^{n/2}|x_{\sigma(2i-1)}|}&=(-1)^{(\sum\limits_{i=1}^{n/2}|x_{2i-1}|)+|x_i|+|x_{i+1}|}.
	\end{align*}
	Take this results into equation (\ref{per}), and we can easily finish the prove for $\sigma=(i,i+1)$. 
	\begin{align*}
	&(-1)^{\sum\limits_{i=1}^{n/2}|x_{\sigma(2i-1)}|}sgn(\sigma)\epsilon(\sigma;x_1,\cdots,x_{n-1})\\
	=&(-1)^{\sum\limits_{i=1}^{n/2}|x_{2i-1}|}(-1)^{1+|x_i|+|x_{i+1}|+|x_i||x_{i+1}|}\\
	=&(-1)^{\sum\limits_{i=1}^{n/2}|x_{2i-1}|}(-1)^{(1+|x_i|)(1+|x_{i+1}|)}\\
	=&(-1)^{\sum\limits_{i=1}^{n/2}|x_{2i-1}|}\epsilon(\sigma;sx_1,\cdots,sx_{n-1}).
	\end{align*}
	
	Now we assume the conclusion is true if $|\sigma|=k$ and consider the case that $|\sigma|=k+1$. With the decomposition of $\sigma$, we have $\sigma=\tau\delta$ where $|\tau|=k$ and $\delta=(i,i+1)$ for some $i$. Define $x_{\tau(1)}\otimes\cdots\otimes x_{\tau(n-1)}:=y_1\otimes\cdots\otimes y_{n-1}$. Then $x_{\sigma(1)}\otimes\cdots\otimes x_{\sigma(n-1)}:=y_{\delta(1)}\otimes\cdots\otimes y_{\delta(n-1)}$. Based on the properties of the Koszul sign, we verify equation (\ref{per}) as follows.
	\begin{align*}
	&(-1)^{\sum\limits_{i=1}^{n/2}|x_{\sigma(2i-1)}|}sgn(\sigma)\epsilon(\sigma;x_1,\cdots,x_{n-1})\\
	=&(-1)^{\sum\limits_{i=1}^{n/2}|y_{\delta(2i-1)}|}sgn(\tau)sgn(\delta)\epsilon(\delta;y_1,\cdots,y_{n-1})\epsilon(\tau;x_1,\cdots,x_{n-1})\\
	=&(-1)^{\sum\limits_{i=1}^{n/2}|y_{2i-1}|}\epsilon(\delta;sy_1,\cdots,sy_{n-1})sgn(\tau)\epsilon(\tau;x_1,\cdots,x_{n-1})\\
	=&\epsilon(\delta;sx_{\tau(1)},\cdots,sx_{\tau(n-1)})(-1)^{\sum\limits_{i=1}^{n/2}|x_{\tau(2i-1)}|}sgn(\tau)\epsilon(\tau;x_1,\cdots,x_{n-1})\\
	=&(-1)^{\sum\limits_{i=1}^{n/2}|x_{2i-1}|}\epsilon(\delta;sx_{\tau(1)},\cdots,sx_{\tau(n-1)})\epsilon(\tau;sx_1,\cdots,sx_{n-1})\\
	=&(-1)^{\sum\limits_{i=1}^{n/2}|x_{2i-1}|}\epsilon(\sigma;sx_1,\cdots,sx_{n-1}).
	\end{align*}
	Hence equation (\ref{per}) holds for any $\sigma\in\mathbb{S}_{n-1}$. Equation (\ref{per1}) can be calculated in an analogous way.
\end{proof}
Then we construct the coderivation associated to given $PL_{\infty}$-algebra structure maps.
\begin{proposition}\label{p}(1) 
Let $\{\mathfrak{p}_n:\otimes^nV\to V\}$ be a collection of linear maps of degree $n-2$. Then 
for $n\geq1$,  $\mathfrak{p}_n=\mathfrak{p}_n\circ (\rho_{\sigma}^{(2)}\otimes I_1)$ for any $\sigma\in\mathbb{S}_{n-1}$ if and only if $\hat{\mathfrak{p}}_n=\hat{\mathfrak{p}}_n\circ (\rho_{\sigma}^{(1)}\otimes I_1)$ for any $\sigma\in\mathbb{S}_{n-1}$.

(2) A collection of maps $\{\hat{\mathfrak{q}}_n:P^nV\to V\}$ of degree $-1$ can be uniquely extended as a coderivation $\tilde{\mathfrak{q}}:P^*V\to P^*V$ where the component $\tilde{\mathfrak{q}}:P^kV\to P^lV$ is defined as $$\frac{1}{(l-1)!(k-l)!}(\sum\limits_{i=0}^{l-2}I_i\wedge\hat{\mathfrak{q}}_{k-l+1}\wedge I_{l-i-2}\otimes I_1+I_{l-1}\wedge\hat{\mathfrak{q}}_{k-l+1})\circ(\rho_{w_{k-1}}^{(1)}\otimes I_1).$$ 
\end{proposition}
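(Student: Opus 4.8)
For part (1), I would evaluate both identities on homogeneous elements after suspension and reconcile the signs via Lemma~\ref{Per}. Fix $n$ and homogeneous $x_1,\dots,x_n\in V$; take $n$ even (the odd case is identical, using \eqref{per1} in place of \eqref{per}). Every odd index $2i-1$ with $1\le i\le n/2$ lies in $\{1,\dots,n-1\}$, so $\sigma\in\mathbb{S}_{n-1}$ permutes all of them. Applying the suspension formula to $\hat{\mathfrak{p}}_n(sx_1,\dots,sx_n)$ and to $\hat{\mathfrak{p}}_n\big((\rho^{(1)}_\sigma\otimes I_1)(sx_1\otimes\cdots\otimes sx_n)\big)=\epsilon(\sigma;sx_1,\dots,sx_{n-1})\,\hat{\mathfrak{p}}_n(sx_{\sigma(1)},\dots,sx_{\sigma(n-1)},sx_n)$, one sees that $\hat{\mathfrak{p}}_n=\hat{\mathfrak{p}}_n\circ(\rho^{(1)}_\sigma\otimes I_1)$ holds if and only if $(-1)^{\sum_{i=1}^{n/2}|x_{2i-1}|}\mathfrak{p}_n(x_1,\dots,x_n)=(-1)^{\sum_{i=1}^{n/2}|x_{\sigma(2i-1)}|}\epsilon(\sigma;sx_1,\dots,sx_{n-1})\,\mathfrak{p}_n(x_{\sigma(1)},\dots,x_{\sigma(n-1)},x_n)$. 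Since all signs are $\pm1$, Lemma~\ref{Per} rewrites the coefficient on the right-hand side as $(-1)^{\sum_{i=1}^{n/2}|x_{2i-1}|}\,sgn(\sigma)\,\epsilon(\sigma;x_1,\dots,x_{n-1})$, and cancelling the common factor $(-1)^{\sum_{i=1}^{n/2}|x_{2i-1}|}$ yields exactly $\mathfrak{p}_n=\mathfrak{p}_n\circ(\rho^{(2)}_\sigma\otimes I_1)$. This settles (1) in both directions at once.

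For part (2), the plan follows the standard pattern for cofree conilpotent coalgebras: a coderivation is pinned down by its corestriction to the cogenerators, and then one checks the displayed formula realizes the prescribed corestriction. Write $p\colon P^*V\to P^1V=V$ for the canonical projection, so that a coderivation $\tilde{\mathfrak{q}}$ has corestriction $\{\,p\circ\tilde{\mathfrak{q}}|_{P^nV}\,\}_{n\ge1}$. \emph{Uniqueness.} If $\tilde{\mathfrak{q}},\tilde{\mathfrak{q}}'$ are coderivations with the same corestriction, then $D:=\tilde{\mathfrak{q}}-\tilde{\mathfrak{q}}'$ is a coderivation with $pD=0$. Decompose $D|_{P^nV}=\sum_{l\le n}D^{n,l}$ with $D^{n,l}\colon P^nV\to P^lV$; then $D^{n,1}=0$ for all $n$ because $p|_{P^1V}=Id_V$. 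The comultiplication satisfies $\Delta(P^nV)\subseteq\bigoplus_{i+j=n,\ i,j\ge1}P^iV\otimes P^jV$, so restricting $\Delta D=(D\otimes Id+Id\otimes D)\Delta$ to the component $P^nV\to\bigoplus_{a+b=l}P^aV\otimes P^bV$ expresses $\Delta D^{n,l}$ through components $D^{m,m'}$ with $m<n$ and $m-m'=n-l$; inducting on $n$ for fixed defect $n-l$ these vanish, hence $\Delta D^{n,l}=0$. Since the $(l-1)$-fold iterated comultiplication $P^lV\to\otimes^lV$ agrees, under the canonical identifications, with $\hat{\gamma}|_{P^lV}$, which is injective by Lemma~\ref{cd}, every $x\in P^lV$ with $l\ge2$ and $\Delta x=0$ must vanish; thus $D^{n,l}=0$ for all $l\ge2$ as well, and $D=0$.

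\emph{Existence.} It remains to verify that the displayed formula for $\tilde{\mathfrak{q}}^{k,l}\colon P^kV\to P^lV$ has the three required properties: (i) it is well defined, i.e.\ it descends from $\otimes^{k-1}V\otimes V$ to $P^kV=\wedge^{k-1}V\otimes V$ and its image lies in $P^lV$ --- this is guaranteed by the prefactor $\rho^{(1)}_{w_{k-1}}\otimes I_1$, which symmetrises the first $k-1$ slots via the integral, together with the fact that each $\hat{\mathfrak{q}}_{k-l+1}$ is already given on $P^{k-l+1}V$; (ii) the components assemble into a coderivation, $\Delta\tilde{\mathfrak{q}}=(\tilde{\mathfrak{q}}\otimes Id+Id\otimes\tilde{\mathfrak{q}})\Delta$; and (iii) its corestriction is the prescribed one, since the $l=1$ component equals $\frac1{(k-1)!}\hat{\mathfrak{q}}_k\circ(\rho^{(1)}_{w_{k-1}}\otimes I_1)=\hat{\mathfrak{q}}_k$, because $\hat{\mathfrak{q}}_k$ is already symmetric in its first $k-1$ arguments. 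For (ii) I would split each summand of $\tilde{\mathfrak{q}}$ according to whether $\hat{\mathfrak{q}}$ acts on a middle slot (the terms $I_i\wedge\hat{\mathfrak{q}}_{k-l+1}\wedge I_{l-i-2}\otimes I_1$, leaving the distinguished last tensor factor untouched) or on the last slot (the term $I_{l-1}\wedge\hat{\mathfrak{q}}_{k-l+1}$), apply $\Delta$, and match term by term against $(\tilde{\mathfrak{q}}\otimes Id+Id\otimes\tilde{\mathfrak{q}})\Delta$: the contributions in which all inputs of $\hat{\mathfrak{q}}$ lie on one side of a comultiplication split correspond exactly to the $\tilde{\mathfrak{q}}\otimes Id$ or $Id\otimes\tilde{\mathfrak{q}}$ terms, and the shuffle sums together with the normalising constants $\frac1{(l-1)!(k-l)!}$ are precisely what forces the remaining ``straddling'' contributions to cancel (equivalently, never to appear).

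The main obstacle is step (ii): checking well-definedness and the coderivation identity for the explicit component formula amounts to an unshuffle-and-Koszul-sign bookkeeping exercise --- the left-Perm-coalgebra analogue of the corresponding verifications for $T^*V$ in Lemma~\ref{tm} and for $\wedge^*V$ --- where one must confirm that the integrals $w_{k-1}$ and the factorials $(l-1)!(k-l)!$ reconcile the shuffle sums on $\otimes^\bullet V$ with the passage to the quotients $P^\bullet V$. Part (1), by comparison, is a direct sign computation once Lemma~\ref{Per} is available.
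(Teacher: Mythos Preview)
Your proposal is correct and aligns with the paper's argument. For part~(1) you do exactly what the paper does: evaluate on suspended homogeneous elements and invoke Lemma~\ref{Per} to trade the $\rho^{(1)}$-sign for the $\rho^{(2)}$-sign; the paper writes out the two implications separately while you phrase it as a single ``if and only if'', but the content is identical.

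For part~(2) the overall shape matches, with one stylistic divergence worth noting. The paper dispatches uniqueness in a single line (``Since the component $\tilde{\mathfrak{q}}:P^nV\to V$ is exactly $\hat{\mathfrak{q}}_n$, the uniqueness is given''), tacitly relying on the cofree property, and then spends all its effort on the explicit verification of the coderivation identity: it writes the component $\tilde{\mathfrak{q}}:P^kV\to P^lV$ as a sum over $Sh(k-l,1,l-2)$ and $Sh(l-1,k-l)$, computes $\Delta\tilde{\mathfrak{q}}$ as four shuffle sums (equation~(\ref{e1})), then computes $(\tilde{\mathfrak{q}}\otimes Id)\Delta$ and $(Id\otimes\tilde{\mathfrak{q}})\Delta$ as two shuffle sums each (equations~(\ref{e2}) and~(\ref{e3})), and observes $(\ref{e1})=(\ref{e2})+(\ref{e3})$. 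Your ``straddling'' terms indeed never appear: the four pieces match two-and-two with no residual cancellation needed. Conversely, your uniqueness argument via $pD=0$, induction on $n$, and the injectivity of the iterated comultiplication (identified with $\hat{\gamma}|_{P^lV}$ through Lemma~\ref{cd}) is considerably more detailed than anything the paper writes; it is a genuine proof of the cofree fact the paper leaves implicit. So the trade-off is: the paper is terse on uniqueness and explicit on existence, while you are explicit on uniqueness and schematic on existence. The actual labour, as you correctly flag, is the shuffle bookkeeping in step~(ii), and the paper's computation confirms your plan goes through without surprises.
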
 

\begin{proof} (1) Suppose $n$ is even.  For $\mathfrak{p}_n$ satisfying $\mathfrak{p}_n=\mathfrak{p}_n\circ (\rho_{\sigma}^{(2)}\otimes I_1)$, we have
		 \begin{align*}
		 &\hat{\mathfrak{p}}_n(sx_{\sigma(1)},\cdots,sx_{\sigma(n-1)},sx_n)\\
		 =&(-1)^{\sum\limits_{i=1}^{n/2}|x_{\sigma(2i-1)}|}s\mathfrak{p}_n(x_{\sigma(1)},\cdots,x_{\sigma(n-1)},x_n)\\
		 =&(-1)^{\sum\limits_{i=1}^{n/2}|x_{\sigma(2i-1)}|}sgn(\sigma)\epsilon(\sigma;x_1,\cdots,x_{n-1})s\mathfrak{p}_n(x_1,\cdots,x_n)\\
		 =&\epsilon(\sigma;sx_1,\cdots,sx_{n-1})(-1)^{\sum\limits_{i=1}^{n/2}|x_{2i-1}|}s\mathfrak{p}_n(x_1,\cdots,x_n)\\
		 =&\epsilon(\sigma;sx_1,\cdots,sx_{n-1})\hat{\mathfrak{p}}_n(sx_1,\cdots,sx_n).
		 \end{align*}
		  Conversely, for $ \hat{\mathfrak{p}}_n $ satisfying $\hat{\mathfrak{p}}_n=\hat{\mathfrak{p}}_n\circ (\rho_{\sigma}^{(1)}\otimes I_1)$, we have 
		 \begin{align*}
		 &\mathfrak{p}_n(x_{\sigma(1)},\cdots,x_{\sigma(n)})\\
		 =&(-1)^{\sum\limits_{i=1}^{n/2}|x_{\sigma(2i-1)}|}s^{-1}\hat{\mathfrak{p}}_n(sx_{\sigma(1)},\cdots,sx_{\sigma(n-1)},sx_n)\\
		 =&(-1)^{\sum\limits_{i=1}^{n/2}|x_{\sigma(2i-1)}|}\epsilon(\sigma;sx_1,\cdots,sx_{n-1})s^{-1}\hat{\mathfrak{p}}_n(sx_1,\cdots,sx_n)\\
		 =&(-1)^{\sum\limits_{i=1}^{n/2}|x_{2i-1}|}sgn(\sigma)\epsilon(\sigma;x_1,\cdots,x_{n-1})s^{-1}\hat{\mathfrak{p}}_n(sx_1,\cdots,sx_n)\\
		 =&sgn(\sigma)\epsilon(\sigma;x_1,\cdots,x_{n-1})\mathfrak{p}_n(x_1,\cdots,x_n).
		 \end{align*}
A similar discussion can be given in the case of $n$ is odd.

(2)  Since the component $\tilde{\mathfrak{q}}:P^nV\to V$ is exactly $\hat{\mathfrak{q}}_n$, the uniqueness is given. So all we need to do is to check $\tilde{\mathfrak{q}}$ is a coderivation. In fact, we only need to show their components from $P^kV$ to $P^iV\otimes P^jV$ are equal. Note that the component $\tilde{\mathfrak{q}}:P^kV\to P^lV$ is computed as follows.
		 \begin{align*}
		 &\tilde{\mathfrak{q}}(x_1\wedge\cdots\wedge x_{k-1}\otimes x_k)\\
		 =&\frac{1}{(l-1)!(k-l)!}(\sum\limits_{i=0}^{l-2}I_i\wedge\hat{\mathfrak{q}}_{k-l+1}\wedge I_{l-i-2}\otimes I_1+I_{l-1}\wedge\hat{\mathfrak{q}}_{k-l+1})\circ(\rho_{w_{k-1}}^{(1)}\otimes I_1)(x_1\wedge\cdots\wedge\\
		 & x_{k-1}\otimes x_k)\\
		 =&\sum\limits_{\sigma \in \mathbb{S}_{k-1}}\frac{\epsilon(\sigma)}{(l-1)!(k-l)!}(\sum\limits_{i=0}^{l-2}(-1)^{\sum\limits_{r=1}^i|x_{\sigma(r)}|}x_{\sigma(1)}\wedge\cdots\wedge x_{\sigma(i)}\wedge\hat{\mathfrak{q}}_{k-l+1}(x_{\sigma(i+1)},\cdots,\\
		 & x_{\sigma(i+k-l+1)})\wedge x_{\sigma(i+k-l+2)}\wedge\cdots\wedge x_{\sigma(k-1)}\otimes x_k\\
		 &+(-1)^{\sum\limits_{t=1}^{l-1}|x_{\sigma(t)}|}x_{\sigma(1)}\wedge\cdots\wedge x_{\sigma(l-1)}\wedge\hat{\mathfrak{q}}_{k-l+1}(x_{\sigma(l)},\cdots, x_{\sigma(k-1)}, x_k))\\
		 =&\sum\limits_{\sigma \in Sh(k-l,1,l-2)}\epsilon(\sigma)\hat{\mathfrak{q}}_{k-l+1}(x_{\sigma(1)},\cdots, x_{\sigma(k-l+1)})\wedge x_{\sigma(k-l+2)}\wedge\cdots\wedge x_{\sigma(k-1)}\otimes x_k\\
		 &+\sum\limits_{\sigma \in Sh(l-1,k-l)}(-1)^{\sum\limits_{t=1}^{l-1}|x_{\sigma(t)}|}\epsilon(\sigma)x_{\sigma(1)}\wedge\cdots\wedge x_{\sigma(l-1)}\otimes\hat{\mathfrak{q}}_{k-l+1}(x_{\sigma(l)},\cdots, x_{\sigma(k-1)}, x_k)
		 \end{align*}
		 Using this, we can obtain the component of $\Delta\tilde{\mathfrak{q}}:P^kV\to P^{i+j}V\to P^iV\otimes P^jV$.
		 \begin{align}\label{e1}
		 &\Delta\tilde{\mathfrak{q}}(x_1\wedge\cdots\wedge x_{k-1}\otimes x_k)\notag\\
		 =&\sum\limits_{\sigma \in Sh(k-i-j,1,i+j-2)}\epsilon(\sigma)\Delta(\hat{\mathfrak{q}}_{k-i-j+1}(x_{\sigma(1)},\cdots, x_{\sigma(k-i-j+1)})\wedge x_{\sigma(k-i-j+2)}\wedge\cdots\wedge x_{\sigma(k-1)}\otimes x_k)\notag\\
		 &+\sum\limits_{\sigma \in Sh(i+j-1,k-i-j)}(-1)^{\sum\limits_{t=1}^{i+j-1}|x_{\sigma(t)}|}\epsilon(\sigma)\Delta(x_{\sigma(1)}\wedge\cdots\wedge x_{\sigma(i+j-1)}\otimes\hat{\mathfrak{q}}_{k-i-j+1}(x_{\sigma(i+j)},\cdots,\notag\\
		 & x_{\sigma(k-1)}, x_k))\notag\\
		 =&\sum\limits_{\sigma \in Sh(k-i-j,1,i-2,1,j-1)}\epsilon(\sigma)(\hat{\mathfrak{q}}_{k-i-j+1}(x_{\sigma(1)},\cdots, x_{\sigma(k-i-j+1)})\wedge x_{\sigma(k-i-j+2)}\wedge\cdots\wedge x_{\sigma(k-j-1)}\notag\\
		 &\otimes x_{\sigma(k-j)})\otimes(x_{\sigma(k-j+1)}\wedge\cdots\wedge x_{\sigma(k-1)}\otimes x_k)\notag\\
		 &+\sum\limits_{\sigma \in Sh(i-1,k-i-j,1,j-1)}(-1)^{\sum\limits_{t=1}^{i-1}|x_{\sigma(t)}|}\epsilon(\sigma)(x_{\sigma(1)}\wedge\cdots\wedge x_{\sigma(i-1)}\otimes \hat{\mathfrak{q}}_{k-i-j+1}(x_{\sigma(i)},\cdots,  x_{\sigma(k-j)}))\notag\\
		 &\otimes(x_{\sigma(k-j+1)}\wedge\cdots\wedge x_{\sigma(k-1)}\otimes x_k)\notag\\
		 &+\sum\limits_{\sigma \in Sh(i-1,1,k-i-j,1,j-2)}(-1)^{\sum\limits_{t=1}^{i}|x_{\sigma(t)}|}\epsilon(\sigma)(x_{\sigma(1)}\wedge\cdots\wedge x_{\sigma(i-1)}\otimes x_{\sigma(i)})\otimes(\hat{\mathfrak{q}}_{k-i-j+1}(x_{\sigma(i+1)}\notag\\
		 &,\cdots, x_{\sigma(k-j+1)})\wedge x_{\sigma(k-j+2)}\wedge\cdots\wedge x_{\sigma(k-1)}\otimes x_k)\notag\\
		 &+\sum\limits_{\sigma \in Sh(i-1,1,j-1,k-i-j)}(-1)^{\sum\limits_{t=1}^{i+j-1}|x_{\sigma(t)}|}\epsilon(\sigma)(x_{\sigma(1)}\wedge\cdots\wedge x_{\sigma(i-1)}\otimes x_{\sigma(i)})\otimes(x_{\sigma(i+1)}\wedge\cdots\wedge\notag\\
		 &x_{\sigma(i+j-1)}\otimes\hat{\mathfrak{q}}_{k-i-j+1}(x_{\sigma(i+j)},\cdots, x_{\sigma(k-1)}, x_k))
		 \end{align}
		 
		 On the other hand, we consider the component $P^kV\to P^{k-j}V\otimes P^jV\to P^iV\otimes P^jV$ of $(\tilde{\mathfrak{q}}\otimes Id_{P^*V})\Delta$ and $P^kV\to P^iV\otimes P^{k-i}V\to P^iV\otimes P^jV$ of $(Id_{P^*V}\otimes\tilde{\mathfrak{q}})\Delta$.
		 \begin{align}\label{e2}
		 &((\tilde{\mathfrak{q}}\otimes Id_{P^*V})\Delta)(x_1\wedge\cdots\wedge x_{k-1}\otimes x_k)\notag\\
		 =&\sum\limits_{\sigma \in Sh(k-j-1,1,j-1)}\epsilon(\sigma)\tilde{\mathfrak{q}}(x_{\sigma(1)}\wedge\cdots\wedge x_{\sigma(k-j-1)}\otimes x_{\sigma(k-j)})\otimes(x_{\sigma(k-j+1)}\wedge\cdots\wedge x_{\sigma(k-1)}\otimes x_k)\notag\\
		   =&\sum\limits_{\sigma \in Sh(k-i-j,1,i-2,1,j-1)}\epsilon(\sigma)(\hat{\mathfrak{q}}_{k-i-j+1}(x_{\sigma(1)},\cdots, x_{\sigma(k-i-j+1)})\wedge x_{\sigma(k-i-j+2)}\wedge\cdots\wedge x_{\sigma(k-j-1)}\notag\\
		  &\otimes x_{\sigma(k-j)})\otimes(x_{\sigma(k-j+1)}\wedge\cdots\wedge x_{\sigma(k-1)}\otimes x_k)\notag\\
		  &+\sum\limits_{\sigma \in Sh(i-1,k-i-j,1,j-1)}(-1)^{\sum\limits_{t=1}^{i-1}|x_{\sigma(t)}|}\epsilon(\sigma)(x_{\sigma(1)}\wedge\cdots\wedge x_{\sigma(i-1)}\otimes \hat{\mathfrak{q}}_{k-i-j+1}(x_{\sigma(i)},\cdots,  x_{\sigma(k-j)}))\notag\\
		  &\otimes(x_{\sigma(k-j+1)}\wedge\cdots\wedge x_{\sigma(k-1)}\otimes x_k)
		 \end{align}
		 \begin{align}\label{e3}
		 &((Id_{P^*V}\otimes \tilde{\mathfrak{q}})\Delta)(x_1\wedge\cdots\wedge x_{k-1}\otimes x_k)\notag\\
		 =&\sum\limits_{\sigma \in Sh(i-1,1,k-i-1)}(-1)^{\sum\limits_{t=1}^{i}|x_{\sigma(t)}|}\epsilon(\sigma)(x_{\sigma(1)}\wedge\cdots\wedge x_{\sigma(i-1)}\otimes x_{\sigma(i)})\otimes\tilde{\mathfrak{q}}(x_{\sigma(i+1)}\wedge\cdots\wedge x_{\sigma(k-1)}\notag\\
		 &\otimes x_k)\notag\\
		 =&\sum\limits_{\sigma \in Sh(i-1,1,k-i-j,1,j-2)}(-1)^{\sum\limits_{t=1}^{i}|x_{\sigma(t)}|}\epsilon(\sigma)(x_{\sigma(1)}\wedge\cdots\wedge x_{\sigma(i-1)}\otimes x_{\sigma(i)})\otimes(\hat{\mathfrak{q}}_{k-i-j+1}(x_{\sigma(i+1)}\notag\\
		 &,\cdots, x_{\sigma(k-j+1)})\wedge x_{\sigma(k-j+2)}\wedge\cdots\wedge x_{\sigma(k-1)}\otimes x_k)\notag\\
		 &+\sum\limits_{\sigma \in Sh(i-1,1,j-1,k-i-j)}(-1)^{\sum\limits_{t=1}^{i+j-1}|x_{\sigma(t)}|}\epsilon(\sigma)(x_{\sigma(1)}\wedge\cdots\wedge x_{\sigma(i-1)}\otimes x_{\sigma(i)})\otimes(x_{\sigma(i+1)}\wedge\cdots\wedge\notag\\
		 &x_{\sigma(i+j-1)}\otimes\hat{\mathfrak{q}}_{k-i-j+1}(x_{\sigma(i+j)},\cdots, x_{\sigma(k-1)}, x_k))
		 \end{align}
		  
		  It can be easily seen that $(\ref{e1})=(\ref{e2})+(\ref{e3})$, which means that $\tilde{\mathfrak{q}}$ is a coderivation of $P^*V$.
\end{proof}

 Based on the one to one correspondence $\{\mathfrak{p}_n\}\leftrightarrow\{\hat{\mathfrak{p}}_n\}\leftrightarrow\tilde{\mathfrak{p}}$, we give the following theorem.
 
 \begin{theorem}\label{tpl}
	For a graded vector space $V$ the following statements are equivalent:
	\begin{enumerate}
		\item\label{ti1} $(V,\{\mathfrak{p}_n\})$ satisfies equations (\ref{pl1});
		\item\label{ti2} $(sV,\{\hat{\mathfrak{p}}_n\})$ satisfies equations (\ref{pl2});
		\item\label{ti3} $\tilde{\mathfrak{p}}$ is a coderivation of coalgebra $P^*(sV)$ of degree $-1$ such that $\tilde{\mathfrak{p}}^2=0$.
	\end{enumerate}
\end{theorem}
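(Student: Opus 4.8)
The plan is to establish the two equivalences $(\ref{ti1})\Leftrightarrow(\ref{ti2})$ and $(\ref{ti2})\Leftrightarrow(\ref{ti3})$ separately, following the pattern of Lemma \ref{tm} and its $L_\infty$-counterpart but now carried out over the cofree left Perm-coalgebra $P^*(sV)$ rather than over $T^*(sV)$ or $\wedge^*(sV)$. Throughout I will use the suspension bijection $\mathfrak{p}_n\leftrightarrow\hat{\mathfrak{p}}_n$ recalled before Lemma \ref{tm} and the coderivation correspondence $\hat{\mathfrak{p}}_n\leftrightarrow\tilde{\mathfrak{p}}$ from Proposition \ref{p}(2).

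For $(\ref{ti1})\Leftrightarrow(\ref{ti2})$: the equivalence of the symmetry conditions $\mathfrak{p}_n=\mathfrak{p}_n\circ(\rho^{(2)}_\sigma\otimes I_1)$ and $\hat{\mathfrak{p}}_n=\hat{\mathfrak{p}}_n\circ(\rho^{(1)}_\sigma\otimes I_1)$ is exactly Proposition \ref{p}(1), so only the quadratic relations remain. I would evaluate the quadratic relation of (\ref{pl1}) on a generic homogeneous tensor $x_1\otimes\cdots\otimes x_{2n-1}$ and the quadratic relation of (\ref{pl2}) on $sx_1\otimes\cdots\otimes sx_{2n-1}$, expand $\rho^{(2)}_{w_{i+j-2}}$ and $\rho^{(1)}_{w_{i+j-2}}$ as sums over $\mathbb{S}_{i+j-2}$, and then collapse each sum to the unshuffle sums over $Sh(j-1,1,i-2)$ and $Sh(i-1,j-1)$ using the already-established symmetry and the fact that $w_m$ is an integral; this produces precisely the two explicit forms recorded in Remark \ref{rapl}(1) and in the Remark following Definition \ref{dal}. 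The only subtlety is the matching of signs: desuspension introduces the factors appearing in the suspension bijection, and these are reconciled with the Koszul signs $sgn(\sigma)\epsilon(\sigma)$ via Lemma \ref{Per} together with the tensor sign rule $(f\otimes g)(x\otimes y)=(-1)^{|x||g|}f(x)\otimes g(y)$, applied once to the inner operation and once to the outer one. This sign chase is the bulk of the argument and is of the same nature as the computation already performed in the proof of Proposition \ref{p}(1).

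For $(\ref{ti2})\Leftrightarrow(\ref{ti3})$: by Proposition \ref{p}(2) the family $\{\hat{\mathfrak{p}}_n\}$ extends uniquely to a degree $-1$ coderivation $\tilde{\mathfrak{p}}$ of $P^*(sV)$, and conversely every degree $-1$ coderivation of $P^*(sV)$ is obtained from its corestriction $\{\hat{\mathfrak{p}}_n\}$ to the cogenerators $sV=P^1(sV)$. Since $\tilde{\mathfrak{p}}$ is odd, $\tilde{\mathfrak{p}}^2=\tfrac12[\tilde{\mathfrak{p}},\tilde{\mathfrak{p}}]$ is again a coderivation of $P^*(sV)$, and by the universal property of the cofree left Perm-coalgebra a coderivation vanishes iff its corestriction to $sV$ does. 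Hence $\tilde{\mathfrak{p}}^2=0$ iff $\mathrm{pr}_{sV}\circ\tilde{\mathfrak{p}}^2|_{P^n(sV)}=0$ for every $n$. Then I would compute $\mathrm{pr}_{sV}\circ\tilde{\mathfrak{p}}^2$ on $P^n(sV)$ by splitting $\tilde{\mathfrak{p}}^2$ as $\sum_{i+j=n+1}\hat{\mathfrak{p}}_i\circ(\text{component }P^{i+j-1}(sV)\to P^i(sV)\text{ of }\tilde{\mathfrak{p}})$; inserting the formula of Proposition \ref{p}(2) for that component, the wedge products are turned into ordinary tensors by $\hat{\mathfrak{p}}_i$, the $I_m\wedge\hat{\mathfrak{p}}_j\wedge I_{i-m-2}\otimes I_1$ summands together with the trailing $I_{i-1}\wedge\hat{\mathfrak{p}}_j$ summand reassemble into $\sum_{m=0}^{i-1}I_m\otimes\hat{\mathfrak{p}}_j\otimes I_{i-m-1}$, and the normalization $\frac{1}{(i-1)!(j-1)!}$ survives, so one recovers exactly the left-hand side of the quadratic relation of (\ref{pl2}). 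Thus $\tilde{\mathfrak{p}}^2=0$ is equivalent to (\ref{pl2}), closing the chain.

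I expect the main obstacle to be purely combinatorial: verifying that passing between $\otimes^{i+j-2}(sV)\otimes sV$ and $\wedge^{i+j-2}(sV)\otimes sV$ does not mis-count unshuffles, i.e.\ that the factorials in Proposition \ref{p}(2) and in (\ref{pl2}) match after the projection $\mathrm{pr}_{sV}$ without spurious factors, and similarly that the collapse of the $\mathbb{S}_{i+j-2}$-sums to unshuffle sums in the first equivalence carries the correct multiplicities. The sign reconciliation in $(\ref{ti1})\Leftrightarrow(\ref{ti2})$ is delicate but entirely mechanical once Lemma \ref{Per} is in hand.
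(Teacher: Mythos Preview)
Your proposal is correct and, for $(\ref{ti1})\Leftrightarrow(\ref{ti2})$, matches the paper's route: the paper packages the sign chase you describe into Lemma \ref{38} and Proposition \ref{p1}, but the content is the same application of Lemma \ref{Per} to pass between the $\rho^{(2)}$- and $\rho^{(1)}$-forms of the quadratic relation.

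For $(\ref{ti2})\Leftrightarrow(\ref{ti3})$ your argument is genuinely cleaner than the paper's. The paper proves the implication $\hat{\mathfrak{Q}}_n=0\Rightarrow\tilde{\mathfrak{p}}^2=0$ by a direct computation (Proposition \ref{p2}) of every component $P^k(sV)\to P^{k-n+1}(sV)$ of $\tilde{\mathfrak{p}}^2$, showing each is built from the $\hat{\mathfrak{Q}}_n$'s; this involves expanding $\tilde{\mathfrak{q}}\circ\tilde{\mathfrak{q}}$ into seven shuffle sums and checking that three of them cancel pairwise. You bypass all of that by observing that $\tilde{\mathfrak{p}}^2=\tfrac12[\tilde{\mathfrak{p}},\tilde{\mathfrak{p}}]$ is again a coderivation and hence, by cofreeness of $P^*(sV)$, vanishes iff its corestriction to $sV$ does; then only the easy component $P^n(sV)\to sV$ needs to be identified with the left-hand side of (\ref{pl2}). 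What your route buys is economy; what the paper's route buys is an explicit formula for all components of $\tilde{\mathfrak{p}}^2$, which is not needed for Theorem \ref{tpl} itself but could be of independent use. One small point to tighten: Proposition \ref{p}(2) is stated only for degree $-1$ coderivations, whereas $\tilde{\mathfrak{p}}^2$ has degree $-2$, so you should either invoke the general cofree property of $P^*(sV)$ directly (as you do) or remark that the proof of Proposition \ref{p}(2) goes through unchanged in arbitrary degree.
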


\subsection{Proof of Theorem \ref{tpl}}According to Proposition \ref{p}, the stability of $\mathfrak{p}_n$ under the action of $\rho^{(2)}$, the stability of $\hat{\mathfrak{p}}_n$ under the action of $\rho^{(1)}$ and the condition that $\tilde{\mathfrak{p}}$ is a coderivation are equivalent. So the rest of our task is to show the equivalence of reminders of equations (\ref{pl1}) and equations (\ref{pl2}) and $\tilde{\mathfrak{p}}^2=0$. In fact, we only need to consider the relation of three corresponding composited maps. In this subsection, we show that there is an one to one correspondence between composited maps in (\ref{ti1}) and (\ref{ti2}), and then we explore the relation of composited maps in (\ref{ti2}) and $\tilde{\mathfrak{p}}^2$. Based on these facts, we manage to give a proof of Theorem \ref{tpl}. Our proof is inspired from \cite{laj}.

To discuss the relation of composited maps in equations (\ref{pl1}) and equations (\ref{pl2}), we require the following lemma.
\begin{lemma}\label{38} With the same notations as Theorem \ref{tpl}, we have 
	\begin{enumerate}
		\item \begin{align*}
		&\hat{\mathfrak{p}}_i(\hat{\mathfrak{p}}_j(sx_1, \cdots,  sx_{j}),sx_{j+1},\cdots, sx_{i+j-1})\\
		=&\begin{cases}
		(-1)^{(j(i-1)+\sum\limits_{r=1}^{(i+j)/2-1}|x_{2r}|)}s\mathfrak{p}_i(\mathfrak{p}_j(x_1, \cdots, x_j),x_{j+1},\cdots,  x_{i+j-1}),&\mbox{$i+j$ even},\\
		-(-1)^{(j(i-1)+\sum\limits_{r=1}^{(i+j-1)/2}|x_{2r-1}|)}s\mathfrak{p}_i(\mathfrak{p}_j(x_1, \cdots, x_j),x_{j+1},\cdots,  x_{i+j-1}),&\mbox{$i+j$ odd}.
		\end{cases}
		\end{align*}
		\item \begin{align*}
		&\hat{\mathfrak{p}}_i(sx_1, \cdots, sx_{i-1}, \hat{\mathfrak{p}}_j(sx_{i}, \cdots, sx_{i+j-1}))\\
		=&\begin{cases}
		(-1)^{(\sum\limits_{r=1}^{(i+j)/2-1}|x_{2r}|+(j-1)(\sum\limits_{t=1}^{i-1}|x_{t}|))}s\mathfrak{p}_i(x_1,\cdots, x_{i-1}, \mathfrak{p}_j(x_{i}, \cdots, x_{i+j-1})),&\mbox{$i+j$ even},\\
		-(-1)^{(\sum\limits_{r=1}^{(i+j-1)/2}|x_{2r-1}|+(j-1)(\sum\limits_{t=1}^{i-1}|x_{t}|))}s\mathfrak{p}_i(x_1,\cdots, x_{i-1}, \mathfrak{p}_j(x_{i}, \cdots, x_{i+j-1})),&\mbox{$i+j$ odd}.
		\end{cases}
		\end{align*}
	\end{enumerate}
\end{lemma}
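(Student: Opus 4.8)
The statement is a ``local'' unwinding of the suspension isomorphism $\{\mathfrak{p}_n\}\leftrightarrow\{\hat{\mathfrak{p}}_n\}$ applied twice: once to the inner map and once to the outer map. My plan is to compute both sides of each identity directly from the defining formula
\[
\hat{\mathfrak{p}}_n(sx_1,\cdots,sx_n)=\varepsilon_n\, s\mathfrak{p}_n(x_1,\cdots,x_n),
\]
where $\varepsilon_n=(-1)^{\sum_{i=1}^{n/2}|x_{2i-1}|}$ for $n$ even and $\varepsilon_n=-(-1)^{\sum_{i=1}^{(n-1)/2}|x_{2i}|}$ for $n$ odd, and then bookkeep the accumulated Koszul signs. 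For part (1), I would first apply the formula to $\hat{\mathfrak{p}}_j(sx_1,\cdots,sx_j)=\varepsilon_j\, s\mathfrak{p}_j(x_1,\cdots,x_j)$; then I must plug $s\mathfrak{p}_j(x_1,\cdots,x_j)$ (an element of $sV$ of degree $|x_1|+\cdots+|x_j|-1$) into the first slot of $\hat{\mathfrak{p}}_i$ and use the formula again with arguments $(s\mathfrak{p}_j(\cdots),sx_{j+1},\cdots,sx_{i+j-1})$. The subtlety is that the ``degree of the first argument'' is now $(\sum_{t=1}^{j}|x_t|)-1$ rather than $|x_1|$, which shifts the parities governing $\varepsilon_i$; I expect the $(-1)^{j(i-1)}$ factor to emerge precisely from reconciling this shift, together with the extra sign one picks up in pulling $s^{-1}$ past $s\mathfrak{p}_j$, exactly as in the even/odd case analysis of Lemma \ref{Per}.

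\textbf{Key steps, in order.} First I fix parity bookkeeping notation: write $a=\sum_{t=1}^{j}|x_t|$, $b=\sum_{t=j+1}^{i+j-1}|x_t|$, and split into the four cases ($i$ even/odd)$\times$($j$ even/odd), which determine the parity of $i+j$. Second, for part (1), expand $\hat{\mathfrak{p}}_i\bigl(s\mathfrak{p}_j(x_1,\cdots,x_j)\otimes sx_{j+1}\otimes\cdots\bigr)$ by treating $y_1:=\mathfrak{p}_j(x_1,\cdots,x_j)$, $y_2:=x_{j+1},\ldots$ as the arguments of $\mathfrak{p}_i$; the formula gives a sign depending on the degrees $|y_1|=a-(j-2)\cdot 0$\,--\,wait, more precisely $|y_1|=a+(j-2)$ since $|\mathfrak{p}_j|=j-2$\,--\,and on the parities of $|y_2|,|y_3|,\ldots$. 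Third, compare the ``odd-indexed'' or ``even-indexed'' degree sums appearing in $\varepsilon_i$ evaluated at the $y$'s versus at the original $x$'s; the index shift by $1$ (because $x_{j+1}$ sits in slot $2$ of the $\mathfrak{p}_i$-inputs rather than slot $1$) is what converts $\sum|x_{2r-1}|$-type sums into $\sum|x_{2r}|$-type sums and feeds the claimed exponent. Fourth, collect the residual sign into the stated $(-1)^{j(i-1)}$; this is the step where one checks that $|\mathfrak{p}_j|=j-2$ contributes $(j-2)\cdot(\text{something})$ that, modulo $2$, equals $j(i-1)$ plus the reindexing terms. Fifth, repeat the whole argument for part (2), which is easier on the outer map (the inner $\hat{\mathfrak{p}}_j$ occupies the \emph{last} slot of $\hat{\mathfrak{p}}_i$) but requires the Koszul sign $(-1)^{(j-1)(\sum_{t=1}^{i-1}|x_t|)}$ coming from moving the degree-$(j-2)$ output $\mathfrak{p}_j(x_i,\cdots,x_{i+j-1})$ past the earlier arguments when one passes between the $\rho^{(2)}$-world and the $\rho^{(1)}$-world; I'd derive it by the same four-case split.

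\textbf{Main obstacle.} The genuine difficulty is purely the sign arithmetic: keeping straight simultaneously (i) the Koszul signs $\epsilon(\sigma)$ hidden in the suspension formula, (ii) the $\pm$ in the odd-arity case of the $\mathfrak{p}_n\leftrightarrow\hat{\mathfrak{p}}_n$ dictionary, (iii) the degree shift $|\mathfrak{p}_j|=j-2$ of the inner output, and (iv) the reindexing of ``odd position sums'' to ``even position sums'' caused by that output occupying a single slot. My strategy to contain it is to reduce everything, as in the proof of Lemma \ref{Per}, to the single transposition generators and induct, so that each elementary move contributes a transparent factor $(-1)^{(1+|x_i|)(1+|x_{i+1}|)}$; then the four parity cases can be verified by a short congruence-mod-$2$ computation rather than by manipulating the full permutation sums. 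Once both identities are in hand, they plug directly into the composites appearing in equations (\ref{pl1}) and (\ref{pl2}), which is what Subsection 3.3 needs to finish the proof of Theorem \ref{tpl}.
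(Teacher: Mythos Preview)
Your approach is essentially the paper's: unwind the dictionary $\hat{\mathfrak p}_n(sx_1,\dots,sx_n)=\varepsilon_n\,s\mathfrak p_n(x_1,\dots,x_n)$ twice, split into the four parity cases $(i\text{ even/odd})\times(j\text{ even/odd})$, and collapse to the two stated cases according to the parity of $i+j$. The paper carries out exactly this direct four-case computation for part~(1) and declares part~(2) analogous.

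Two small corrections to your plan. First, the ``reduce to transpositions and induct'' strategy borrowed from Lemma~\ref{Per} is a red herring here: that lemma proves an identity \emph{for every} $\sigma\in\mathbb S_{n-1}$, and the induction is over the length of $\sigma$. In the present lemma there is no permutation at all---the arguments are fixed---so there is nothing to induct on; you simply compute $\varepsilon_j$, then $\varepsilon_i$ evaluated at $(y_1,x_{j+1},\dots)$ with $|y_1|=a+(j-2)$, and multiply. Second, your explanation of the factor $(-1)^{(j-1)\sum_{t=1}^{i-1}|x_t|}$ in part~(2) as ``moving $\mathfrak p_j(\cdots)$ past earlier arguments when passing between the $\rho^{(1)}$- and $\rho^{(2)}$-worlds'' is not the mechanism: no Koszul reordering occurs, since $\mathfrak p_j(x_i,\dots,x_{i+j-1})$ already sits in the last slot. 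The factor comes purely from the arithmetic of $\varepsilon_i$ and $\varepsilon_j$: the degree $|y_i|=\sum_{t=i}^{i+j-1}|x_t|+(j-2)$ enters the odd/even-position sum defining $\varepsilon_i$, and reindexing that sum back to the original $x$'s produces the extra $(j-1)\sum_{t\le i-1}|x_t|$ modulo~$2$. With those two adjustments your plan goes through verbatim.
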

\begin{proof} The proof is is a simple calculation. Let $x_i\in V$ be homogeneous elements. Then 
\begin{align*}
&\hat{\mathfrak{p}}_i(\hat{\mathfrak{p}}_j(sx_1, \cdots,sx_{j}),sx_{j+1},\cdots, sx_{i+j-1})\\
=&\begin{cases}
-(-1)^{\sum\limits_{r=1}^{(j-1)/2}|x_{2r}|}\hat{\mathfrak{p}}_i(s\mathfrak{p}_j(x_1,\cdots,x_j),sx_{j+1},\cdots, sx_{i+j-1}),&\mbox{ if $j$ is odd,}\\
(-1)^{\sum\limits_{r=1}^{j/2}|x_{2r-1}|}\hat{\mathfrak{p}}_i(s\mathfrak{p}_j(x_1,\cdots,x_j),sx_{j+1},\cdots, sx_{i+j-1}),&\mbox{ if $j$ is even,}
\end{cases}\\
=&\begin{cases}
-(-1)^{\sum\limits_{r=1}^{(j-1)/2}|x_{2r}|}(-1)^{1+\sum\limits_{t=1}^{(i-1)/2}|x_{2r+j-1}|}\\s\mathfrak{p}_i(\mathfrak{p}_j(x_1, \cdots, x_j),x_{j+1},\cdots,  x_{i+j-1}),&\mbox{$i$ odd},\mbox{$j$ odd},\\
-(-1)^{\sum\limits_{r=1}^{(j-1)/2}|x_{2r}|}(-1)^{j-2+\sum\limits_{q=1}^{j}|x_q|+\sum\limits_{t=2}^{i/2}|x_{2t+j-2}|}\\s\mathfrak{p}_i(\mathfrak{p}_j(x_1, \cdots, x_j),x_{j+1},\cdots,  x_{i+j-1}),&\mbox{$i$ even},\mbox{$j$ odd},\\
(-1)^{\sum\limits_{r=1}^{j/2}|x_{2r-1}|}(-1)^{1+\sum\limits_{t=1}^{(i-1)/2}|x_{2r+j-1}|}\\s\mathfrak{p}_i(\mathfrak{p}_j(x_1, \cdots, x_j),x_{j+1},\cdots,  x_{i+j-1}),&\mbox{$i$ odd},\mbox{$j$ even},\\
(-1)^{\sum\limits_{r=1}^{j/2}|x_{2r-1}|}(-1)^{j-2+\sum\limits_{q=1}^{j}|x_q|+\sum\limits_{t=2}^{i/2}|x_{2t+j-2}|}\\s\mathfrak{p}_i(\mathfrak{p}_j(x_1, \cdots, x_j),x_{j+1},\cdots,  x_{i+j-1}),&\mbox{$i$ even},\mbox{$j$ even},
\end{cases}\\
=&\begin{cases}
(-1)^{(\sum\limits_{r=1}^{(i+j)/2-1}|x_{2r}|)}s\mathfrak{p}_i(\mathfrak{p}_j(x_1, \cdots, x_j),x_{j+1},\cdots,  x_{i+j-1}),&\mbox{$i$ odd},\mbox{$j$ odd},\\
(-1)^{(\sum\limits_{r=1}^{(i+j-1)/2}|x_{2r-1}|)}s\mathfrak{p}_i(\mathfrak{p}_j(x_1, \cdots, x_j),x_{j+1},\cdots,  x_{i+j-1}),&\mbox{$i$ even},\mbox{$j$ odd},\\
(-1)^{(1+\sum\limits_{r=1}^{(i+j-1)/2}|x_{2r-1}|)}s\mathfrak{p}_i(\mathfrak{p}_j(x_1, \cdots, x_j),x_{j+1},\cdots,  x_{i+j-1}),&\mbox{$i$ odd},\mbox{$j$ even},\\
(-1)^{(\sum\limits_{r=1}^{(i+j)/2-1}|x_{2r}|)}s\mathfrak{p}_i(\mathfrak{p}_j(x_1, \cdots, x_j),x_{j+1},\cdots,  x_{i+j-1}),&\mbox{$i$ even},\mbox{$j$ even},
\end{cases}\\
=&\begin{cases}
(-1)^{(j(i-1)+\sum\limits_{r=1}^{(i+j)/2-1}|x_{2r}|)}s\mathfrak{p}_i(\mathfrak{p}_j(x_1, \cdots, x_j),x_{j+1},\cdots,  x_{i+j-1}),&\mbox{$i+j$ even},\\
-(-1)^{(j(i-1)+\sum\limits_{r=1}^{(i+j-1)/2}|x_{2r-1}|)}s\mathfrak{p}_i(\mathfrak{p}_j(x_1, \cdots, x_j),x_{j+1},\cdots,  x_{i+j-1}),&\mbox{$i+j$ odd}.
\end{cases}
\end{align*}
By now we complete the proof of (1). Similarly, we can prove (2).
\end{proof}

Then we obtain the next proposition.
\begin{proposition}\label{p1}Let $(V,\{\mathfrak{p}_n\})$ be a $PL_{\infty}$-algebras in the sense of  Definition \ref{dal} and  $$\mathfrak{P}_n:=\sum\limits_{i+j=n+1}\sum\limits_{m=0}^{i-1}\frac{(-1)^{j(i-m-1)+m}}{(i-1)!(j-1)!}\mathfrak{p}_i\circ(I_{m}\otimes\mathfrak{p}_j\otimes I_{i-m-1})\circ(\rho^{(2)}_{w_{i+j-2}}\otimes I_1).$$  Then
	$\sum\limits_{i+j=n+1}\sum\limits_{m=0}^{i-1}\frac{1}{(i-1)!(j-1)!}\hat{\mathfrak{p}}_i\circ(I_{m}\otimes\hat{\mathfrak{p}}_j\otimes I_{i-m-1})\circ(\rho^{(1)}_{w_{i+j-2}}\otimes I_1)=-\hat{\mathfrak{P}}_n.$
\end{proposition}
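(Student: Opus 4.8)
The plan is to reduce the identity to a termwise comparison, using the suspension correspondence $\mathfrak{p}_n \leftrightarrow \hat{\mathfrak{p}}_n$ together with the combinatorial identities just established in Lemma \ref{38}. First I would fix homogeneous elements $x_1,\dots,x_{n}\in V$ (here $n = i+j-1$ in each summand, so the total number of inputs is $n$) and expand both sides applied to $sx_1\wedge\cdots\wedge sx_{n-1}\otimes sx_n$. The operator $\rho^{(1)}_{w_{i+j-2}}\otimes I_1$ symmetrizes the first $i+j-2$ inputs, so after applying it the map $\hat{\mathfrak{p}}_i\circ(I_m\otimes\hat{\mathfrak{p}}_j\otimes I_{i-m-1})$ becomes, up to the factor $(i-1)!\,(j-1)!$ coming from the orders of the relevant symmetric subgroups (and the choice of the $m$ slots), a sum over $(j-1,1,i-2)$-unshuffles of terms $\hat{\mathfrak{p}}_i(\hat{\mathfrak{p}}_j(sx_{\sigma(1)},\dots,sx_{\sigma(j)}),sx_{\sigma(j+1)},\dots,sx_{\sigma(i+j-2)},sx_{i+j-1})$ with Koszul sign $\epsilon(\sigma;sx_1,\dots,sx_{n-1})$. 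This is exactly the shuffle form of equation (\ref{pl2}) recorded in Remark \ref{rapl}(1), so the left-hand side equals that unshuffle sum over all $i+j = n+1$.

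Next I would do the analogous expansion of $\hat{\mathfrak{P}}_n$. By Proposition \ref{p}(1), the maps $\mathfrak{p}_n$ and $\hat{\mathfrak{p}}_n$ correspond, and $\mathfrak{P}_n$ unpacks (via the formula in the Remark following Definition \ref{dal}) into the same kind of unshuffle sum but with inputs $x_\ell$ instead of $sx_\ell$, with signs $sgn(\sigma)\epsilon(\sigma;x_1,\dots,x_{n-1})$ and extra signs $(-1)^{j(i-1)}$ and $(-1)^{i+j(\sum_{r}|x_{\sigma(r)}|)}$ on the two families of terms. Then $\hat{\mathfrak{P}}_n(sx_1,\dots,sx_n)$ reintroduces a global sign $\pm(-1)^{\sum|x_{2r}|}$ or $\pm(-1)^{\sum|x_{2r-1}|}$ (depending on the parity of $n$) in front of $s\mathfrak{P}_n(x_1,\dots,x_n)$. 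The point is that Lemma \ref{38} tells us precisely how $\hat{\mathfrak{p}}_i(\hat{\mathfrak{p}}_j(sx_1,\dots),\dots)$ relates to $s\mathfrak{p}_i(\mathfrak{p}_j(x_1,\dots),\dots)$, and how $\hat{\mathfrak{p}}_i(\dots,\hat{\mathfrak{p}}_j(\dots))$ relates to $s\mathfrak{p}_i(\dots,\mathfrak{p}_j(\dots))$, in both the even and odd $i+j$ cases. I would substitute these into the unshuffle expansion of the left-hand side, then use Lemma \ref{Per} (with $\sigma$ the unshuffle permuting $n-1 = i+j-2$ entries) to convert $sgn(\sigma)\epsilon(\sigma;x_1,\dots)$ together with the parity-dependent global sign into $\epsilon(\sigma;sx_1,\dots)$. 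After this the two sides should match up term by term, the only subtlety being the bookkeeping of the leading $(-1)$ and $(-1)^{j(i-1)}$ factors and the overall minus sign in the claimed identity.

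The main obstacle I expect is precisely this sign bookkeeping: keeping track of three interleaved sources of signs — the Koszul signs from permuting the suspended variables, the signs $(-1)^{j(i-m-1)+m}$ built into the definition of $\mathfrak{P}_n$ versus the absence of such signs in equation (\ref{pl2}), and the parity-dependent prefactors $(-1)^{\sum|x_{2r}|}$ / $(-1)^{\sum|x_{2r-1}|}$ from the suspension isomorphism applied to $\mathfrak{p}_i$, $\mathfrak{p}_j$ and to the composite. It is reassuring that the $A_\infty$ and $L_\infty$ analogues of exactly this computation are carried out in \cite{k} and \cite{laj}, and the Perm-algebra case differs only in which symmetric subgroups are symmetrized and in the combinatorial factor $\frac{1}{(i-1)!(j-1)!}$ versus $\frac{1}{(i-1)!j!}$; so I would organize the proof to follow that template, treating the cases $i+j$ even and $i+j$ odd separately exactly as Lemma \ref{38} does, and within each, the two families of terms (inner $\mathfrak{p}_j$ in first slot vs. last slot) separately. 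A final consistency check is to verify the claimed sign $-\hat{\mathfrak{P}}_n$ on a small case, say $n=1$ (giving $\hat{\mathfrak{p}}_1\hat{\mathfrak{p}}_1 = -\widehat{\mathfrak{p}_1\mathfrak{p}_1}$, forced by $\hat{\mathfrak{p}}_1 = -s\mathfrak{p}_1 s^{-1}$ since $|\mathfrak{p}_1|=-1$), and $n=2$, which pins down the overall sign unambiguously.
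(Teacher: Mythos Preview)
Your proposal is correct and follows essentially the same route as the paper's proof: expand the left-hand side into unshuffle sums over $Sh(j-1,1,i-2)$ and $Sh(i-1,j-1)$, apply Lemma~\ref{38} to pass from $\hat{\mathfrak{p}}_i(\hat{\mathfrak{p}}_j(\cdots))$ and $\hat{\mathfrak{p}}_i(\cdots,\hat{\mathfrak{p}}_j(\cdots))$ to the corresponding $s\mathfrak{p}_i(\mathfrak{p}_j(\cdots))$ terms, then use Lemma~\ref{Per} to convert $\epsilon(\sigma;sx_1,\dots,sx_{n-1})$ into $sgn(\sigma)\epsilon(\sigma;x_1,\dots,x_{n-1})$ together with the parity prefactor, treating $i+j$ even and odd separately. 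The paper's proof carries out exactly this computation for the even case in full and observes that the parity prefactor $(-1)^{\sum|x_{2r}|}$ depends only on $n$ and hence commutes with $\sum_{i+j=n+1}$, which is the last observation you need to conclude.
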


\begin{proof}Assume that $i+j=n+1$ is even. According to Lemma \ref{per} and Lemma \ref{38}, one derives
	\begin{align*}
	&\sum\limits_{m=0}^{i-1}\frac{1}{(i-1)!(j-1)!}\hat{\mathfrak{p}}_i\circ(I_{m}\otimes\hat{\mathfrak{p}}_j\otimes I_{i-m-1})\circ(\rho^{(1)}_{w_{i+j-2}}\otimes I_1)(sx_1,\cdots,sx_n)\\
	=&\sum\limits_{\sigma \in Sh(j-1,1,i-2)}\epsilon(\sigma;sx_1,\cdots,sx_n)\hat{\mathfrak{p}}_i(\hat{\mathfrak{p}}_j(sx_{\sigma(1)},\cdots,sx_{\sigma(j)}),sx_{\sigma(j+1)},\cdots,sx_{\sigma{(i+j-2)}},sx_{i+j-1})\\
	&+\sum\limits_{\sigma \in Sh(i-1,j-1)}(-1)^{(\sum\limits_{r=1}^{i-1}|sx_{\sigma(r)}|)}\epsilon(\sigma;sx_1,\cdots,sx_n)\hat{\mathfrak{p}}_i(sx_{\sigma(1)},\cdots,sx_{\sigma(i-1)},\hat{\mathfrak{p}}_j(sx_{\sigma(i)},\cdots,\\
	&sx_{\sigma(i+j-2)},\cdots,sx_{i+j-1}))\\
	=&\sum\limits_{\sigma \in Sh(j-1,1,i-2)}\epsilon(\sigma;sx_1,\cdots,sx_n)(-1)^{(j(i-1)+\sum\limits_{r=1}^{(i+j)/2-1}|x_{\sigma(2r)}|)}s\mathfrak{p}_i(\mathfrak{p}_j(x_{\sigma(1)}, \cdots, x_{\sigma(j)}),x_{\sigma(j+1)}\\
	&,\cdots, x_{\sigma(i+j-2)}, x_{i+j-1})\\
	&+\sum\limits_{\sigma \in Sh(i-1,j-1)}(-1)^{(\sum\limits_{r=1}^{i-1}|sx_{\sigma(r)}|)}\epsilon(\sigma;sx_1,\cdots,sx_n)(-1)^{(\sum\limits_{r=1}^{(i+j)/2-1}|x_{\sigma(2r)}|+(j-1)(\sum\limits_{t=1}^{i-1}|x_{\sigma(t)}|))}\\
	&s\mathfrak{p}_i(x_{\sigma(1)},\cdots, x_{\sigma(i-1)}, \mathfrak{p}_j(x_{\sigma(i)}, \cdots, x_{\sigma(i+j-2)}, x_{i+j-1}))\\
	=&(-1)^{(\sum\limits_{r=1}^{(i+j)/2-1}|x_{2r}|)}\sum\limits_{\sigma \in Sh(j-1,1,i-2)}sgn(\sigma)\epsilon(\sigma)(-1)^{j(i-1)}s\mathfrak{p}_i(\mathfrak{p}_j(x_{\sigma(1)}, \cdots,  x_{\sigma(j)}),x_{\sigma(j+1)},\cdots,\\
	& x_{\sigma(i+j-2)}, x_{i+j-1})\\
	&+(-1)^{(\sum\limits_{r=1}^{(i+j)/2-1}|x_{2r}|)}\sum\limits_{\sigma \in Sh(i-1,j-1)}sgn(\sigma)\epsilon(\sigma)(-1)^{(i-1+j(\sum\limits_{t=1}^{i-1}|x_{\sigma(t)}|))}s\mathfrak{p}_i(x_{\sigma(1)},\cdots, x_{\sigma(i-1)}, \mathfrak{p}_j\\
	&(x_{\sigma(i)}, \cdots, x_{\sigma(i+j-2)}, x_{i+j-1}))\\
	=&(-1)^{(\sum\limits_{r=1}^{(i+j)/2-1}|x_{2r}|)}s(\sum\limits_{m=0}^{i-1}\frac{(-1)^{j(i-m-1)+m}}{(i-1)!(j-1)!}\mathfrak{p}_i\circ(I_{m}\otimes\mathfrak{p}_j\otimes I_{i-m-1})\circ(\rho^{(2)}_{w_{i+j-2}}\otimes I_1)\\
	&(x_1,\cdots,x_n)). 
	\end{align*}  Since $(-1)^{(\sum\limits_{r=1}^{(i+j)/2-1}|x_{2r}|)}=(-1)^{(\sum\limits_{r=1}^{(n+1)/2-1}|x_{2r}|)}$, it can be exchanged with $\sum\limits_{i+j=n+1}$. Thus we finish our proof when $n$ is odd. The other case follows from a similar calculation.
\end{proof}
We next consider the relation between the composited map of equations (\ref{pl2}) and the square of associated coderivation. 
\begin{proposition}\label{p2}
	Let $(V,\{\hat{\mathfrak{q}}_n\})$ be a $PL_{\infty}$-algebras in the sense of Definition \ref{dapl}. Then the component $P^kV\to P^{k-n+1}V$ of   $\tilde{\mathfrak{q}}^2$ is given by 
	\begin{align*}
	&\sum\limits_{\sigma \in Sh(n-1,1,k-n-1)}\epsilon(\sigma)\hat{\mathfrak{Q}}_{n}(x_{\sigma(1)},\cdots, x_{\sigma(n)})\wedge x_{\sigma(n+1)}\wedge\cdots\wedge x_{\sigma(k-1)}\otimes x_{k}\\
	&+\sum\limits_{\sigma \in Sh(k-n,n-1)}\epsilon(\sigma)x_{\sigma(1)}\wedge\cdots\wedge x_{\sigma(k-n)}\otimes\hat{\mathfrak{Q}}_{n}(x_{\sigma(k-n+1)},\cdots, x_{\sigma(k-1)}, x_{k}),
	\end{align*}
	where $$\hat{\mathfrak{Q}}_{n}:=\sum\limits_{i+j=n+1}\sum\limits_{m=0}^{i-1}\frac{1}{(i-1)!(j-1)!}\hat{\mathfrak{q}}_i\circ(I_{m}\otimes\hat{\mathfrak{q}}_j\otimes I_{i-m-1})\circ(\rho^{(1)}_{w_{i+j-2}}\otimes I_1).$$
\end{proposition}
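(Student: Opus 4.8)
The plan is to avoid expanding $\tilde{\mathfrak{q}}^{2}$ from scratch and instead use that it is itself a coderivation. First I would note that, since $\tilde{\mathfrak{q}}$ is a coderivation of degree $-1$, one has $\Delta\tilde{\mathfrak{q}}^{2}=(\tilde{\mathfrak{q}}\otimes\mathrm{Id}+\mathrm{Id}\otimes\tilde{\mathfrak{q}})^{2}\Delta$, and the two mixed composites $(\tilde{\mathfrak{q}}\otimes\mathrm{Id})(\mathrm{Id}\otimes\tilde{\mathfrak{q}})$ and $(\mathrm{Id}\otimes\tilde{\mathfrak{q}})(\tilde{\mathfrak{q}}\otimes\mathrm{Id})$ differ by the Koszul sign $(-1)^{|\tilde{\mathfrak{q}}|\cdot|\tilde{\mathfrak{q}}|}=-1$, hence cancel; so $\Delta\tilde{\mathfrak{q}}^{2}=(\tilde{\mathfrak{q}}^{2}\otimes\mathrm{Id}+\mathrm{Id}\otimes\tilde{\mathfrak{q}}^{2})\Delta$, i.e. $\tilde{\mathfrak{q}}^{2}$ is a coderivation of $P^{*}V$ of degree $-2$. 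Since $P^{*}V$ is cofree among left Perm-coalgebras \cite{cl}, such a coderivation is determined by its corestriction $\mathrm{pr}_{V}\circ\tilde{\mathfrak{q}}^{2}|_{P^{m}V}:P^{m}V\to V$, and its component $P^{k}V\to P^{l}V$ is recovered by the formula of Proposition \ref{p}(2), now read for a map of the even degree $-2$, for which all the Koszul signs in that formula equal $+1$. Specialising to the part landing in $P^{k-n+1}V$ (which only uses the corestriction on $P^{n}V$) then gives exactly the two sign-free shuffle sums written in the statement, with $\hat{\mathfrak{Q}}_{n}$ replaced by $(\tilde{\mathfrak{q}}^{2})_{n}:=\mathrm{pr}_{V}\circ\tilde{\mathfrak{q}}^{2}|_{P^{n}V}$. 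So it remains to prove $(\tilde{\mathfrak{q}}^{2})_{n}=\hat{\mathfrak{Q}}_{n}$.

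For that I would write $(\tilde{\mathfrak{q}}^{2})_{n}=\sum_{m}\mathrm{pr}_{V}\circ\tilde{\mathfrak{q}}|_{P^{m}V}\circ\tilde{\mathfrak{q}}|_{P^{n}V\to P^{m}V}=\sum_{m}\hat{\mathfrak{q}}_{m}\circ\tilde{\mathfrak{q}}|_{P^{n}V\to P^{m}V}$, using that the $l=1$ case of Proposition \ref{p}(2) gives $\mathrm{pr}_{V}\circ\tilde{\mathfrak{q}}|_{P^{m}V}=\hat{\mathfrak{q}}_{m}$ (there the factor $1/(m-1)!$ and the integral $w_{m-1}$ collapse by the Perm-symmetry of $\hat{\mathfrak{q}}_{m}$). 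Next I would substitute the closed form of the component $\tilde{\mathfrak{q}}|_{P^{n}V\to P^{m}V}$ already computed inside the proof of Proposition \ref{p} — a sum of two shuffle sums, one in which the inner operation $\hat{\mathfrak{q}}_{n-m+1}$ occupies a wedge-slot, one in which it occupies the last slot — and post-compose with $\hat{\mathfrak{q}}_{m}$. Relabelling $i=m$ and $j=n-m+1$ (so $i+j=n+1$), the first shuffle sum turns into the $\hat{\mathfrak{q}}_{i}(\hat{\mathfrak{q}}_{j}(x_{\sigma(1)},\dots,x_{\sigma(j)}),\dots,x_{i+j-1})$-type summand of $\hat{\mathfrak{Q}}_{n}$ and the second into the $\hat{\mathfrak{q}}_{i}(\dots,\hat{\mathfrak{q}}_{j}(\dots,x_{i+j-1}))$-type summand, using the Perm-symmetry of $\hat{\mathfrak{q}}_{i}$ to place the output of $\hat{\mathfrak{q}}_{j}$ in the first wedge-slot; summing over $m$ recovers the whole sum $\sum_{i+j=n+1}$ defining $\hat{\mathfrak{Q}}_{n}$, the coefficients $1/((i-1)!(j-1)!)$ and the integral $w_{i+j-2}$ being precisely what converts the $\mathbb{S}_{i+j-2}$-sums into the shuffle sums, exactly as in Remark \ref{rapl}(1) and as already carried out in Proposition \ref{p}.

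The hard part will be the Koszul sign bookkeeping in this last step: I must check that the sign $(-1)^{\sum_{t=1}^{m-1}|x_{\sigma(t)}|}$ carried by the component $\tilde{\mathfrak{q}}|_{P^{n}V\to P^{m}V}$ (coming from commuting the degree $-1$ map $\hat{\mathfrak{q}}_{n-m+1}$ past the first $m-1$ entries) is exactly the sign generated by the tensor conventions hidden inside $\hat{\mathfrak{q}}_{i}\circ(I_{m}\otimes\hat{\mathfrak{q}}_{j}\otimes I_{i-m-1})$ in $\hat{\mathfrak{Q}}_{n}$, that no further sign appears when $\hat{\mathfrak{q}}_{m}$ is post-composed, and — in the first paragraph — that the degree $-2$ reading of the Proposition \ref{p}(2) formula really is sign-free. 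A lesser point is the appeal to cofreeness of $P^{*}V$; one may sidestep it by expanding $\tilde{\mathfrak{q}}^{2}$ directly via two applications of the component formula of $\tilde{\mathfrak{q}}$, in which case the terms where the outer $\hat{\mathfrak{q}}$ does not swallow the output of the inner one cancel pairwise — precisely because both operations are of odd degree — which is merely the computational reflection of $\tilde{\mathfrak{q}}^{2}$ being a coderivation.
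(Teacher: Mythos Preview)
Your argument is correct, and it takes a genuinely different route from the paper. The paper proceeds by brute force: it writes out the component $P^{k}V\to P^{k-j+1}V\to P^{k-i-j+2}V$ of $\tilde{\mathfrak{q}}^{2}$ by two successive applications of the explicit component formula for $\tilde{\mathfrak{q}}$, obtaining seven shuffle sums; after summing over $i+j=n+1$ it shows that three of these (the ``cross terms'' in which the outer $\hat{\mathfrak{q}}$ does not swallow the output of the inner one) cancel pairwise by the odd-degree Koszul sign, and then repackages the surviving four sums into the two $\hat{\mathfrak{Q}}_{n}$-sums of the statement. This is exactly the direct expansion you describe in your last sentence as the ``sidestep'' to cofreeness.

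Your main route instead \emph{predicts} the shape of the answer: once $\tilde{\mathfrak{q}}^{2}$ is seen to be a coderivation of degree $-2$, cofreeness forces its $P^{k}V\to P^{k-n+1}V$ component to be the Perm-coderivation extension of its corestriction $(\tilde{\mathfrak{q}}^{2})_{n}$, and the even degree kills the Koszul signs in the extension formula, yielding precisely the two sign-free shuffle sums of the statement. The remaining work --- identifying $(\tilde{\mathfrak{q}}^{2})_{n}$ with $\hat{\mathfrak{Q}}_{n}$ --- is just the $k=n$ instance of the paper's computation, where only the last four of the seven terms survive trivially. So your approach trades the full seven-term expansion and its cancellations for a conceptual step (coderivation + cofreeness) plus a much shorter elementwise check; the paper's approach is self-contained but longer.

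One point to tighten: Proposition~\ref{p}(2) is stated and proved only for degree $-1$, so to invoke its formula ``read for degree $-2$'' you must either appeal to the general cofreeness of $P^{*}V$ as a conilpotent Perm-coalgebra (your citation of \cite{cl}) or note that the verification in the proof of Proposition~\ref{p}(2) goes through verbatim for any homogeneous degree, the only change being the Koszul exponent $|\hat{\mathfrak{q}}|\cdot\sum|x_{\sigma(t)}|$. Either is routine, but it should be said explicitly. Your sign checklist in the last paragraph is accurate; in particular the sign $(-1)^{\sum_{t=1}^{i-1}|x_{\sigma(t)}|}$ from the second shuffle sum in $\tilde{\mathfrak{q}}|_{P^{n}V\to P^{i}V}$ is indeed the Koszul sign produced by $I_{i-1}\otimes\hat{\mathfrak{q}}_{j}$ inside $\hat{\mathfrak{Q}}_{n}$, and no new sign arises from post-composition with $\hat{\mathfrak{q}}_{i}$.
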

\begin{proof} We first compute the component $P^kV\to P^{k-j+1}V\to P^{k-i-j+2}V$ of $\tilde{\mathfrak{q}}^2$.
	\begin{align*}
	&\tilde{\mathfrak{q}}^2(x_1\wedge\cdots\wedge x_{k-1}\otimes x_k)\\
	=&\sum\limits_{\sigma \in Sh(j-1,1,k-j-1)}\epsilon(\sigma)\tilde{\mathfrak{q}}(\hat{\mathfrak{q}}_{j}(x_{\sigma(1)},\cdots, x_{\sigma(j)})\wedge x_{\sigma(j+1)}\wedge\cdots\wedge x_{\sigma(k-1)}\otimes x_k)\\
	&+\sum\limits_{\sigma \in Sh(k-j,j-1)}(-1)^{\sum\limits_{t=1}^{k-j}|x_{\sigma(t)}|}\epsilon(\sigma)\tilde{\mathfrak{q}}(x_{\sigma(1)}\wedge\cdots\wedge x_{\sigma(k-j)}\otimes\hat{\mathfrak{q}}_{j}(x_{\sigma(k-j+1)},\cdots, x_{\sigma(k-1)}, x_k))\\
	=&\sum\limits_{\sigma \in Sh(j-1,1,i-2,1,k-i-j)}\epsilon(\sigma)\hat{\mathfrak{q}}_{i}(\hat{\mathfrak{q}}_{j}(x_{\sigma(1)},\cdots, x_{\sigma(j)}),x_{\sigma(j+1)},\cdots, x_{\sigma(i+j-1)})\wedge x_{\sigma(i+j)}\wedge\cdots\wedge\\
	& x_{\sigma(k-1)}\otimes x_k\\
	&+\sum\limits_{\sigma \in Sh(i-1,j-1,1,k-i-j)}(-1)^{\sum\limits_{r=1}^{i-1}|x_{\sigma(r)}|}\epsilon(\sigma)\hat{\mathfrak{q}}_{i}(x_{\sigma(1)},\cdots, x_{\sigma(i-1)},\hat{\mathfrak{q}}_{j}(x_{\sigma(i)},\cdots, x_{\sigma(i+j-1)}))\wedge\\
	& x_{\sigma(i+j)}\wedge\cdots\wedge x_{\sigma(k-1)}\otimes x_{k}\\
	&+\sum\limits_{\sigma \in Sh(i-1,1,j-1,1,k-i-j-1)}(-1)^{\sum\limits_{r=1}^{i}|x_{\sigma(r)}|}\epsilon(\sigma)\hat{\mathfrak{q}}_{i}(x_{\sigma(1)},\cdots, x_{\sigma(i)})\wedge \hat{\mathfrak{q}}_{j}(x_{\sigma(i+1)},\cdots, x_{\sigma(i+j)})\\
	&\wedge\cdots\wedge x_{\sigma(k-1)}\otimes x_{k}\\
	&+\sum\limits_{\sigma \in Sh(i-1,1,k-i-j,j-1)}(-1)^{\sum\limits_{t=1}^{k-j}|x_{\sigma(t)}|}\epsilon(\sigma)\hat{\mathfrak{q}}_{i}(x_{\sigma(1)},\cdots, x_{\sigma(i)})\wedge x_{\sigma(i+1)}\wedge\cdots\wedge x_{\sigma(k-j)}\otimes \hat{\mathfrak{q}}_{j}\\
	&(x_{\sigma(k-j+1)},\cdots, x_{\sigma(k-1)}, x_k)\\
	&+\sum\limits_{\sigma \in Sh(j-1,1,k-i-j,i-1)}(-1)^{1+\sum\limits_{r=1}^{k-i}|x_{\sigma(r)}|}\epsilon(\sigma)\hat{\mathfrak{q}}_{j}(x_{\sigma(1)},\cdots, x_{\sigma(j)})\wedge\cdots\wedge x_{\sigma(k-i)}\otimes\hat{\mathfrak{q}}_{i}(x_{\sigma(k-i+1)},\\
	&\cdots, x_{\sigma(k-1)}, x_{k})\\
	&+\sum\limits_{\sigma \in Sh(k-i-j+1,j-1,1,i-2)}\epsilon(\sigma)x_{\sigma(1)}\wedge\cdots\wedge x_{\sigma(k-i-j+1)}\otimes\hat{\mathfrak{q}}_{i}(\hat{\mathfrak{q}}_{j}(x_{\sigma(k-i-j+2)},\cdots, x_{\sigma(k-i+1)}),\\
	&\cdots, x_{\sigma(k-1)}, x_{k})\\
	&+\sum\limits_{\sigma \in Sh(k-i-j+1,i-1,j-1,1)}(-1)^{\sum\limits_{t=k-i-j+2}^{k-j}|x_{\sigma(t)}|}\epsilon(\sigma)x_{\sigma(1)}\wedge\cdots\wedge x_{\sigma(k-i-j+1)}\otimes\hat{\mathfrak{q}}_{i}(x_{\sigma(k-i-j+2)},\\
	&\cdots, x_{\sigma(k-j)}, \hat{\mathfrak{q}}_{j}(x_{\sigma(k-j+1)},\cdots, x_{\sigma(k-1)}, x_k)).
	\end{align*}
	
Note that component $P^kV\to P^{k-n+1}V$ of $\tilde{\mathfrak{q}}^2$ is the sum of components $P^kV\to P^{k-j+1}V\to P^{k-i-j+2}V$ satisfying $i+j=n+1$. So we add $\sum\limits_{i+j=n+1}$ to the above result, and the middle three items can be eliminated as follows.
\begin{align*}
&\sum\limits_{i+j=n+1}\sum\limits_{\sigma \in Sh(i-1,1,j-1,1,k-i-j-1)}(-1)^{\sum\limits_{r=1}^{i}|x_{\sigma(r)}|}\epsilon(\sigma)\hat{\mathfrak{q}}_{i}(x_{\sigma(1)},\cdots, x_{\sigma(i)})\wedge \hat{\mathfrak{q}}_{j}(x_{\sigma(i+1)},\cdots, x_{\sigma(i+j)})\\
&\wedge\cdots\wedge x_{\sigma(k-1)}\otimes x_{k}\\
=&\frac{1}{2}(\sum\limits_{i+j=n+1}\sum\limits_{\sigma \in Sh(i-1,1,j-1,1,k-i-j-1)}(-1)^{\sum\limits_{r=1}^{i}|x_{\sigma(r)}|}\epsilon(\sigma)\hat{\mathfrak{q}}_{i}(x_{\sigma(1)},\cdots, x_{\sigma(i)})\wedge \hat{\mathfrak{q}}_{j}(x_{\sigma(i+1)},\cdots,\\
& x_{\sigma(i+j)})\wedge\cdots\wedge x_{\sigma(k-1)}\otimes x_{k}\\
&+\sum\limits_{i+j=n+1}\sum\limits_{\sigma \in Sh(j-1,1,i-1,1,k-i-j-1)}(-1)^{\sum\limits_{r=1}^{j}|x_{\sigma(r)}|}\epsilon(\sigma)\hat{\mathfrak{q}}_{j}(x_{\sigma(1)},\cdots, x_{\sigma(j)})\wedge \hat{\mathfrak{q}}_{i}(x_{\sigma(j+1)},\cdots,\\
& x_{\sigma(i+j)})\wedge\cdots\wedge x_{\sigma(k-1)}\otimes x_{k})\\
=&\frac{1}{2}(\sum\limits_{i+j=n+1}\sum\limits_{\sigma \in Sh(i-1,1,j-1,1,k-i-j-1)}(-1)^{\sum\limits_{r=1}^{i}|x_{\sigma(r)}|}\epsilon(\sigma)\hat{\mathfrak{q}}_{i}(x_{\sigma(1)},\cdots, x_{\sigma(i)})\wedge \hat{\mathfrak{q}}_{j}(x_{\sigma(i+1)},\cdots,\\
& x_{\sigma(i+j)})\wedge\cdots\wedge x_{\sigma(k-1)}\otimes x_{k}\\
&+\sum\limits_{i+j=n+1}\sum\limits_{\sigma \in Sh(j-1,1,i-1,1,k-i-j-1)}(-1)^{1+\sum\limits_{r=1}^{i}|x_{\sigma(r)}|}\epsilon(\sigma)\hat{\mathfrak{q}}_{i}(x_{\sigma(1)},\cdots, x_{\sigma(i)})\wedge \hat{\mathfrak{q}}_{j}(x_{\sigma(i+1)},\cdots,\\
& x_{\sigma(i+j)})\wedge\cdots\wedge x_{\sigma(k-1)}\otimes x_{k})\\
=&0,
\end{align*}and 
\begin{align*}
&\sum\limits_{i+j=n+1}\sum\limits_{\sigma \in Sh(i-1,1,k-i-j,j-1)}(-1)^{\sum\limits_{t=1}^{k-j}|x_{\sigma(t)}|}\epsilon(\sigma)\hat{\mathfrak{q}}_{i}(x_{\sigma(1)},\cdots, x_{\sigma(i)})\wedge\cdots\wedge x_{\sigma(k-j)}\otimes \hat{\mathfrak{q}}_{j}\\
&(x_{\sigma(k-j+1)},\cdots, x_{\sigma(k-1)}, x_k)\\
&+\sum\limits_{i+j=n+1}\sum\limits_{\sigma \in Sh(j-1,1,k-i-j,i-1)}(-1)^{1+\sum\limits_{r=1}^{k-i}|x_{\sigma(r)}|}\epsilon(\sigma)\hat{\mathfrak{q}}_{j}(x_{\sigma(1)},\cdots, x_{\sigma(j)})\wedge\cdots\wedge x_{\sigma(k-i)}\otimes\hat{\mathfrak{q}}_{i}\\
&(x_{\sigma(k-i+1)},\cdots, x_{\sigma(k-1)}, x_{k})\\
=&\sum\limits_{i+j=n+1}\sum\limits_{\sigma \in Sh(i-1,1,k-i-j,j-1)}(-1)^{\sum\limits_{t=1}^{k-j}|x_{\sigma(t)}|}\epsilon(\sigma)\hat{\mathfrak{q}}_{i}(x_{\sigma(1)},\cdots, x_{\sigma(i)})\wedge\cdots\wedge x_{\sigma(k-j)}\otimes \hat{\mathfrak{q}}_{j}\\
&(x_{\sigma(k-j+1)},\cdots, x_{\sigma(k-1)}, x_k)\\
&+\sum\limits_{i+j=n+1}\sum\limits_{\sigma \in Sh(i-1,1,k-i-j,j-1)}(-1)^{1+\sum\limits_{t=1}^{k-j}|x_{\sigma(t)}|}\epsilon(\sigma)\hat{\mathfrak{q}}_{i}(x_{\sigma(1)},\cdots, x_{\sigma(i)})\wedge x_{\sigma(i+1)}\wedge\cdots\wedge x_{\sigma(k-j)}\\
&\otimes \hat{\mathfrak{q}}_{j}(x_{\sigma(k-j+1)},\cdots, x_{\sigma(k-1)}, x_k)\\
=&0.
\end{align*}
The remaining four terms can be simplified by $\hat{\mathfrak{Q}}_n$.
\begin{align*}
&\tilde{\mathfrak{q}}^2(x_1\wedge\cdots\wedge x_{k-1}\otimes x_k)\\
=&\sum\limits_{\sigma \in Sh(j-1,1,i-2,1,k-i-j)}\epsilon(\sigma)\hat{\mathfrak{q}}_{i}(\hat{\mathfrak{q}}_{j}(x_{\sigma(1)},\cdots, x_{\sigma(j)}),x_{\sigma(j+1)},\cdots, x_{\sigma(i+j-1)})\wedge x_{\sigma(i+j)}\wedge\cdots\wedge\\
& x_{\sigma(k-1)}\otimes x_k\\
&+\sum\limits_{\sigma \in Sh(i-1,j-1,1,k-i-j)}(-1)^{\sum\limits_{r=1}^{i-1}|x_{\sigma(r)}|}\epsilon(\sigma)\hat{\mathfrak{q}}_{i}(x_{\sigma(1)},\cdots, x_{\sigma(i-1)},\hat{\mathfrak{q}}_{j}(x_{\sigma(i)},\cdots, x_{\sigma(i+j-1)}))\wedge\\
& x_{\sigma(i+j)}\wedge\cdots\wedge x_{\sigma(k-1)}\otimes x_{k}\\
&+\sum\limits_{\sigma \in Sh(k-i-j+1,j-1,1,i-2)}\epsilon(\sigma)x_{\sigma(1)}\wedge\cdots\wedge x_{\sigma(k-i-j+1)}\otimes\hat{\mathfrak{q}}_{i}(\hat{\mathfrak{q}}_{j}(x_{\sigma(k-i-j+2)},\cdots, x_{\sigma(k-i+1)}),\\
&\cdots,x_{\sigma(k-1)}, x_{k})\\
&+\sum\limits_{\sigma \in Sh(k-i-j+1,i-1,j-1,1)}(-1)^{\sum\limits_{t=k-i-j+2}^{k-j}|x_{\sigma(t)}|}\epsilon(\sigma)x_{\sigma(1)}\wedge\cdots\wedge x_{\sigma(k-i-j+1)}\otimes\hat{\mathfrak{q}}_{i}(x_{\sigma(k-i-j+2)},\\
&\cdots, x_{\sigma(k-j)}, \hat{\mathfrak{q}}_{j}(x_{\sigma(k-j+1)}\wedge\cdots\wedge x_{\sigma(k-1)}\otimes x_k))\\
=&\sum\limits_{\sigma \in Sh(n-1,1,k-n-1)}\epsilon(\sigma)\hat{\mathfrak{Q}}_{n}(x_{\sigma(1)},\cdots, x_{\sigma(n)})\wedge x_{\sigma(n+1)}\wedge\cdots\wedge x_{\sigma(k-1)}\otimes x_{k}\\
&+\sum\limits_{\sigma \in Sh(k-n,n-1)}\epsilon(\sigma)x_{\sigma(1)}\wedge\cdots\wedge x_{\sigma(k-n)}\otimes\hat{\mathfrak{Q}}_{n}(x_{\sigma(k-n+1)},\cdots,x_{\sigma(k-1)}, x_{k}).\\
\end{align*}By now we have completed the proof of this proposition.
\end{proof}

Putting the above conclusions together, we can prove Theorem \ref{tpl} now.

\begin{proof}[Proof of Theorem \ref{tpl}]For a graded vector space $V$ equipped with a collection $\{\mathfrak{p}_n:\otimes^n V\to V,n\geq1\}$ of homogeneous linear maps of cohomological degree $n-2$, let $$\mathfrak{P}_n:=\sum\limits_{i+j=n+1}\sum\limits_{m=0}^{i-1}\frac{(-1)^{j(i-m-1)+m}}{(i-1)!(j-1)!}\mathfrak{p}_i\circ(I_{m}\otimes\mathfrak{p}_j\otimes I_{i-m-1})\circ(\rho^{(2)}_{w_{i+j-2}}\otimes I_1)$$
and $$\hat{\mathfrak{Q}}_{n}:=\sum\limits_{i+j=n+1}\sum\limits_{m=0}^{i-1}\frac{1}{(i-1)!(j-1)!}\hat{\mathfrak{p}}_i\circ(I_{m}\otimes\hat{\mathfrak{p}}_j\otimes I_{i-m-1})\circ(\rho^{(1)}_{w_{i+j-2}}\otimes I_1).$$
	By Proposition \ref{p1}, we have $ \hat{\mathfrak{Q}}_{n}=-\hat{\mathfrak{P}}_{n} $. Then \begin{align*}
	&\text{$(V,\{\mathfrak{p}_n\})$ satisfies Equation (\ref{pl1})},\\
	\Longleftrightarrow&\mathfrak{P}_n=0, \text{for all } n\geq 1,\\
	\Longleftrightarrow&\hat{\mathfrak{Q}}_{n}=0, \text{for all } n\geq 1,\\
\Longleftrightarrow&\text{$(sV,\{\hat{\mathfrak{p}}_n\})$ satisfies Equation (\ref{pl2})}.
	\end{align*}
	
	Analogously, we can obtain the equivalence of (\ref{ti2}) and (\ref{ti3}) by Proposition \ref{p2}.
	\begin{align*}
	&\text{$(sV,\{\hat{\mathfrak{p}}_n\})$ satisfies Equation (\ref{pl2})},\\
	\Longleftrightarrow&\hat{\mathfrak{Q}}_{n}=0, \text{for all } n\geq 1,\\
	\Longleftrightarrow&\tilde{\mathfrak{p}}^2=0,\\
	\end{align*}
	where the implication that  $\tilde{\mathfrak{p}}^2=0\Longrightarrow\hat{\mathfrak{Q}}_{n}=0,n\geq 1$ results from the fact that the component $P^nV\to V$ of $\tilde{\mathfrak{p}}^2$ is exactly $\hat{\mathfrak{Q}}_{n}$.
\end{proof}
\section{relation among $n$-ary algebras and   homotopy algebras}\label{s3}
In this section, we derive the relation of   homotopy algebras from results in previous section. In particular, we get the relation of $n$-ary algebras.

For simplicity, we use $(V,\{\mu_n\})$ to denote a  homotopy algebra in the sense of  Definition \ref{dal}, and $(V,\{\hat{\mu}_n\})$ to denote a   homotopy algebra in the sense of Definition \ref{dapl}. Maps between $(V,\{\mu_n\})$ are denoted by pure letters and maps between $(V,\{\hat{\mu}_n\})$ are denoted by letters with hats. If confusion will not occur, we simply denote the component of a map $f$ by $f$.

Using the functor $Hom(-,V)$ to  the commutative diagram in Lemma \ref{cd}, we derive
\begin{displaymath}
\xymatrix{Hom(T^*V,V)\ar[rr]^{\hat{\alpha}^*}\ar[dr]^{\hat{\gamma}^*}&&Hom(\wedge^*V,V)\\
	&Hom(P^*V,V)\ar[ur]^{\hat{\beta}^*}.&}
\end{displaymath}
This diagram gives commutators of $A_{\infty}$-algebras.
\begin{lemma}\cite{la}
	If $(V,\{\hat{\mathfrak{m}}_n\})$ is an $A_{\infty}$-algebra, then $(V,\{\hat{\alpha}(\hat{\mathfrak{m}}_n)\})$ is an $L_{\infty}$-algebra.
\end{lemma}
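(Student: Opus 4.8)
The statement says: if $(V,\{\hat{\mathfrak{m}}_n\})$ is an $A_\infty$-algebra, then $(V,\{\hat{\alpha}(\hat{\mathfrak{m}}_n)\})$ is an $L_\infty$-algebra (in the degree $-1$ sense of Definition \ref{dapl}). The natural route is to pass through the coalgebra characterizations established in Section \ref{s2}. By Lemma \ref{tm}, an $A_\infty$-structure $\{\hat{\mathfrak{m}}_n\}$ on $V$ is the same as a nilpotent degree $-1$ coderivation $\tilde{\mathfrak{m}}$ on $T^*V$; and the analogous lemma for $L_\infty$-algebras says an $L_\infty$-structure in the sense of Definition \ref{dapl} is the same as a nilpotent degree $-1$ coderivation on $\wedge^*V$. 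So it suffices to produce, from $\tilde{\mathfrak{m}}$, a nilpotent degree $-1$ coderivation on $\wedge^*V$ whose corecursion components are exactly the maps $\hat{\alpha}(\hat{\mathfrak{m}}_n) = \hat{\mathfrak{m}}_n \circ \hat{\alpha}$ (i.e. $\hat{\mathfrak{m}}_n$ symmetrized via $\rho^{(1)}_{w_n}$, viewed as a map $\wedge^nV\to V$).

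First I would record that $\hat{\alpha}\colon \wedge^*V \to T^*V$ is an injective coalgebra map (Lemma \ref{cd}), and that since $V$ has characteristic $0$ it admits a coalgebra retraction $\pi\colon T^*V\to\wedge^*V$ with $\pi\hat{\alpha}=Id$ (the map $\pi=\sum\frac{1}{n!}\pi_n$ from the proof of Lemma \ref{cd}). The candidate coderivation on $\wedge^*V$ is then $\tilde{\mathfrak{l}} := \pi\,\tilde{\mathfrak{m}}\,\hat{\alpha}$. The key points to verify are: (a) $\tilde{\mathfrak{l}}$ is a coderivation of $\wedge^*V$ — this should follow from $\hat{\alpha}$ and $\pi$ being coalgebra maps together with $\tilde{\mathfrak{m}}$ being a coderivation, via a diagram chase with $\Delta$; (b) $\tilde{\mathfrak{l}}$ has degree $-1$ — immediate since all three maps are degree-homogeneous of degrees $0,-1,0$; (c) the component $\wedge^nV\to V$ of $\tilde{\mathfrak{l}}$ is $\hat{\mathfrak{m}}_n\circ\hat{\alpha}$ — here one uses that projecting onto the cogenerators $V\subseteq\wedge^*V$ composed with $\pi$ is just the projection $T^*V\to V$, so the component is $\hat{\mathfrak{m}}_n\hat{\alpha}$ as desired; (d) $\tilde{\mathfrak{l}}^2=0$.

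The last point, (d), is the main obstacle, and it is where characteristic $0$ and the specific shuffle combinatorics matter: $\tilde{\mathfrak{l}}^2 = \pi\tilde{\mathfrak{m}}\hat{\alpha}\pi\tilde{\mathfrak{m}}\hat{\alpha}$, and $\hat{\alpha}\pi$ is \emph{not} the identity on $T^*V$, so one cannot just cancel it against $\tilde{\mathfrak{m}}^2=0$. The resolution I would pursue mirrors the argument used for Proposition \ref{p2}: compute the component $\wedge^nV\to V$ of $\tilde{\mathfrak{l}}^2$ directly. A coderivation of $\wedge^*V$ is determined by its corecursion components (the maps into the cogenerators $V$), so $\tilde{\mathfrak{l}}^2=0$ is equivalent to all components $\wedge^nV\to V$ of $\tilde{\mathfrak{l}}^2$ vanishing. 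That component is a sum $\sum_{i+j=n+1}$ of terms of the shape (symmetrization of $\hat{\mathfrak{m}}_i(\hat{\mathfrak{m}}_j(\cdots),\cdots)$ over $Sh(j,1,i-2)$-type unshuffles), which after normalizing factorials is precisely the composite appearing in equation (\ref{l2}); since $\tilde{\mathfrak{m}}^2=0$ gives the corresponding identity (\ref{m2}) for $\{\hat{\mathfrak{m}}_n\}$, symmetrizing (applying $\rho^{(1)}_{w_n}$, which commutes past everything by the module structure and the shuffle identities in Remark \ref{rm1}) turns (\ref{m2}) into (\ref{l2}).

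Alternatively — and this may be cleaner to write up — I would bypass $\pi$ entirely: define $\tilde{\mathfrak{l}}$ directly as the coderivation of $\wedge^*V$ with corecursion components $\hat{\mathfrak{m}}_n\circ\hat{\alpha}\colon\wedge^nV\to V$ (existence and uniqueness given by the $L_\infty$-analogue of the extension statement, the formula with $\frac{1}{(l-1)!(k-l+1)!}(I_i\wedge\hat{\mu}_{k-l+1}\wedge I_{l-i-1})\rho^{(1)}_{w_k}$ recalled just before Lemma 2.9), and then check $\tilde{\mathfrak{l}}^2=0$ by showing its single defining component $\wedge^nV\to V$ vanishes, using the $A_\infty$-relations \eqref{m2} for $\{\hat{\mathfrak{m}}_n\}$ and the identity $\hat{\alpha}\circ(\text{coproduct on }\wedge^*V)=(\text{coproduct on }T^*V)\circ\hat{\alpha}$ to re-express $\hat{\mathfrak{m}}_i\hat{\alpha}(\hat{\mathfrak{m}}_j\hat{\alpha}\wedge Id)$ in terms of $\hat{\mathfrak{m}}_i(I_m\otimes\hat{\mathfrak{m}}_j\otimes I_{i-m-1})$ symmetrized. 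Either way, the crux is the bookkeeping of Koszul signs and shuffle-coefficient factorials showing the symmetrized $A_\infty$-relation is exactly the $L_\infty$-relation; everything else is formal coalgebra manipulation already licensed by Lemmas \ref{cd}, \ref{tm} and Proposition \ref{p}.
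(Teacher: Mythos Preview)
The paper does not prove this lemma itself; it merely cites \cite{la}. However, the paper's own proof of the immediately following Theorem~\ref{tc} (the $PL_\infty$-analogue) supplies the template it would use, and your proposal diverges from it in one essential respect.

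Your step~(a) contains an error: $\pi=\sum\frac{1}{n!}\pi_n$ is \emph{not} a coalgebra map from $(T^*V,\Delta)$ to $(\wedge^*V,\Delta)$, so the claim that $\tilde{\mathfrak{l}}=\pi\tilde{\mathfrak{m}}\hat{\alpha}$ is a coderivation ``via a diagram chase with $\Delta$'' does not go through as stated. Your alternative route (define $\tilde{\mathfrak{l}}$ directly by its corecursion components) repairs this, so the overall plan is salvageable.

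More importantly, you miss the simplification that makes step~(d) trivial. The paper's method (see the proof of Theorem~\ref{tc}) is to verify the intertwining relation $\hat{\alpha}\,\tilde{\mathfrak{l}}=\tilde{\mathfrak{m}}\,\hat{\alpha}$ componentwise. Once that is established, $\hat{\alpha}\,\tilde{\mathfrak{l}}^2=\tilde{\mathfrak{m}}^2\,\hat{\alpha}=0$, and since $\hat{\alpha}$ is injective (Lemma~\ref{cd}), $\tilde{\mathfrak{l}}^2=0$ follows immediately. There is no need to compute the components of $\tilde{\mathfrak{l}}^2$ by hand, no need to symmetrize the $A_\infty$-relation into the $L_\infty$-relation, and the obstacle you identify (that $\hat{\alpha}\pi\neq Id_{T^*V}$) simply never arises. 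Your plan would work, but it replaces a two-line argument by a substantial shuffle-sign computation.
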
 
The previous lemma can be naturally generalized to $PL_{\infty}$-algebras.
\begin{theorem}\label{tc}
	\begin{enumerate}
		\item Suppose that $(V,\{\hat{\mathfrak{m}}_n\})$ is an $A_{\infty}$-algebra. Then $(V,\{\hat{\gamma}(\hat{\mathfrak{m}}_n)\})$ is a $PL_{\infty}$-algebra.
		\item For a $PL_{\infty}$-algebra $(V,\{\hat{\mathfrak{p}}_n\})$, the collection $\{\hat{\beta}(\hat{\mathfrak{m}}_n)\}$ defines an $L_{\infty}$-algebra structure on $V$.
	\end{enumerate}
\end{theorem}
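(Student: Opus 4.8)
The plan is to deduce Theorem \ref{tc} from the coalgebra picture established in Section \ref{s2}, exactly as the analogous result for $A_\infty$ and $L_\infty$ was deduced by T. Lada via the cofree coalgebra $T^*V$. The key observation is that by Theorem \ref{tpl} and its $A_\infty$ and $L_\infty$ analogues (Lemma \ref{tm} and the subsequent lemma), each flavour of homotopy algebra structure on $V$ is the same data as a degree $-1$, square-zero coderivation on the corresponding cofree object: $\tilde{\mathfrak{m}}$ on $T^*(sV)$, $\tilde{\mathfrak{p}}$ on $P^*(sV)$, $\tilde{\mathfrak{l}}$ on $\wedge^*(sV)$. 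So the proof reduces to transporting a square-zero coderivation along the injective coalgebra maps $\hat\gamma:P^*(sV)\hookrightarrow T^*(sV)$ and $\hat\beta:\wedge^*(sV)\hookrightarrow P^*(sV)$ of Lemma \ref{cd}.

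The main step is a general transfer lemma: if $\iota:C\to D$ is an injective coalgebra map admitting a coalgebra retraction $\pi:D\to C$ (i.e.\ $\pi\iota=\mathrm{id}_C$), and $\tilde\delta$ is a coderivation of $D$, then $\pi\tilde\delta\iota$ is a coderivation of $C$; moreover if $\tilde\delta^2=0$ then $(\pi\tilde\delta\iota)^2 = \pi\tilde\delta\iota\pi\tilde\delta\iota$, and when $\iota\pi\tilde\delta=\tilde\delta$ (the coderivation "lands in the image of $\iota$", which will hold here on the relevant components because $\hat\gamma$ and $\hat\beta$ are built from symmetrisation operators $\rho^{(1)}_w$ that are idempotent up to scalar and commute with the defining formulas of $\tilde{\mathfrak m}$ resp. $\tilde{\mathfrak p}$) this gives $(\pi\tilde\delta\iota)^2=0$. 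First I would recall from the proof of Lemma \ref{cd} that $\pi=\sum \frac{1}{n!}\pi_n$ retracts $\hat\alpha$, and exhibit the analogous retractions for $\hat\beta$ and $\hat\gamma$; these are again built from the canonical projections and normalising scalars, and $\pi\hat\gamma = \mathrm{id}$, $\pi'\hat\beta=\mathrm{id}$ follow by the same short computation with integrals $w_n$. Then I would check that $\hat\gamma^*(\tilde{\mathfrak m})$ and $\hat\beta^*(\tilde{\mathfrak p})$ are precisely the coderivations built (via Proposition \ref{p}(2) and its $L_\infty$ analogue) from the collections $\{\hat\gamma(\hat{\mathfrak m}_n)\}$ and $\{\hat\beta(\hat{\mathfrak p}_n)\}$ — this is a matter of matching the explicit component formulas, using that $\hat\gamma=\sum\rho^{(1)}_{w_{n-1}}\otimes I_1$ and $\hat\beta=\sum_{\sigma\in Sh(n-1,1)}\rho^{(1)}_\sigma$.

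Concretely the argument runs: (1) Given the $A_\infty$-algebra $(V,\{\hat{\mathfrak m}_n\})$, Lemma \ref{tm} gives $\tilde{\mathfrak m}$ a square-zero degree $-1$ coderivation of $T^*(sV)$. (2) Form $\tilde{\mathfrak p}:=\hat\gamma^{-1}\tilde{\mathfrak m}\hat\gamma$ on $P^*(sV)$ — more precisely the coderivation of $P^*(sV)$ whose $P^n(sV)\to sV$ component is $\hat\gamma(\hat{\mathfrak m}_n):=\hat{\mathfrak m}_n\circ\hat\gamma = \hat{\mathfrak m}_n\circ(\rho^{(1)}_{w_{n-1}}\otimes I_1)$. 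One checks this component automatically satisfies the stability condition $\hat{\mathfrak p}_n=\hat{\mathfrak p}_n\circ(\rho^{(1)}_\sigma\otimes I_1)$ because $w_{n-1}\sigma=w_{n-1}$ in $\mathbb{K}\mathbb{S}_{n-1}$. (3) Show $\tilde{\mathfrak p}^2=0$: since $\hat\gamma$ is an injective coalgebra map with coalgebra retraction and $\hat\gamma\tilde{\mathfrak p}=\tilde{\mathfrak m}\hat\gamma$ (by construction, checked on components using that $\Delta$-compatibility forces a coderivation to be determined by its corestriction and that the corestrictions agree), we get $\hat\gamma\tilde{\mathfrak p}^2=\tilde{\mathfrak m}^2\hat\gamma=0$, and injectivity of $\hat\gamma$ yields $\tilde{\mathfrak p}^2=0$. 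Then Theorem \ref{tpl} says $(sV,\{\hat\gamma(\hat{\mathfrak m}_n)\})$ — equivalently $(V,\{\hat\gamma(\hat{\mathfrak m}_n)\})$ after desuspension — is a $PL_\infty$-algebra. Part (2) is identical with $\hat\beta:\wedge^*(sV)\to P^*(sV)$ in place of $\hat\gamma$, using that $\tilde{\mathfrak p}^2=0$ and that $\hat\beta$ is an injective coalgebra map; here $\hat\beta(\hat{\mathfrak p}_n)=\sum_{\sigma\in Sh(n-1,1)}\hat{\mathfrak p}_n\circ\rho^{(1)}_\sigma$ is visibly fully symmetric, so the resulting coderivation lives on $\wedge^*(sV)$ and Theorem of the $L_\infty$ case applies.

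The step I expect to be the main obstacle is verifying the intertwining identity $\hat\gamma\tilde{\mathfrak p}=\tilde{\mathfrak m}\hat\gamma$ (and its $\hat\beta$ analogue) rather than merely $\hat\gamma(\hat{\mathfrak m}_n\circ\hat\gamma)$ matching the corestriction: one must show that the full coderivation extension $\tilde{\mathfrak p}$ of the corestricted family $\{\hat{\mathfrak m}_n\circ\hat\gamma\}$ given by the messy formula in Proposition \ref{p}(2) actually equals $\hat\gamma^{-1}$ conjugate of $\tilde{\mathfrak m}$ on \emph{all} components $P^k\to P^l$, not just $P^n\to sV$. This is where the combinatorics of unshuffles, the identities among the integrals $w_n$ in $\mathbb{K}\mathbb{S}_n$, and the Koszul signs of Remark \ref{rm1} all have to line up; it is essentially the same bookkeeping that appears in the proof of Proposition \ref{p2}, and I would either reduce it to that proposition or invoke the uniqueness clause of Proposition \ref{p}(2) (a coderivation of $P^*(sV)$ is determined by its corestriction to $sV$) to shortcut the computation: since both $\hat\gamma\tilde{\mathfrak p}\hat\gamma^{-1}$-type maps restrict to the same corestriction on $T^*(sV)$, uniqueness of the coderivation extension forces them equal. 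That uniqueness argument is the clean way to avoid grinding through the shuffle sums.
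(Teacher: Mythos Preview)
Your overall strategy coincides with the paper's: pass to the coderivation picture (Lemma~\ref{tm}, Theorem~\ref{tpl}), establish the intertwining $\hat\gamma\,\tilde{\mathfrak q}=\tilde{\mathfrak m}\,\hat\gamma$ for the coderivation $\tilde{\mathfrak q}$ extending $\{\hat{\mathfrak m}_n\circ\hat\gamma\}$, and then use injectivity of $\hat\gamma$ to conclude $\tilde{\mathfrak q}^2=0$. The paper proves the intertwining by a direct component-wise computation, first rewriting it as $\tilde{\mathfrak q}=(\pi\otimes I_1)\,\tilde{\mathfrak m}\,\hat\gamma$ via the linear retraction $\pi\otimes I_1$ and then matching the shuffle sums on $P^kV\to P^lV$; part~(2) is handled the same way with $\hat\beta$.

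Two small corrections to your shortcut proposals. First, the retraction $\pi$ (and hence $\pi\otimes I_1$) is \emph{not} a coalgebra map, so your ``transfer lemma'' as stated does not apply; the paper uses $\pi\otimes I_1$ only as a linear left inverse to reduce the identity to something checkable, not as a coalgebra morphism. Second, the clean way to run your uniqueness argument is not via Proposition~\ref{p}(2) but via cofreeness of $T^*V$: both $\hat\gamma\,\tilde{\mathfrak q}$ and $\tilde{\mathfrak m}\,\hat\gamma$ are $\hat\gamma$-coderivations $P^*V\to T^*V$ (i.e.\ $\Delta_T f=(f\otimes\hat\gamma+\hat\gamma\otimes f)\Delta_P$), and such maps into a cofree conilpotent coalgebra are determined by their corestriction to $V$; since both corestrict to $\hat{\mathfrak m}_n\circ\hat\gamma$ on $P^nV$, they agree. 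This is a legitimate alternative to the paper's explicit computation, but your phrasing in terms of ``$\hat\gamma\tilde{\mathfrak p}\hat\gamma^{-1}$'' and Proposition~\ref{p}(2) does not quite capture it.
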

\begin{proof}
	Denote the associated coderivation of $\{\hat{\mathfrak{m}}_n\}$ resp. $\{\hat{\gamma}(\hat{\mathfrak{m}}_n)\}$ by $\tilde{\mathfrak{m}}$ resp. $\tilde{\mathfrak{q}}$. We first show that the following diagram is commutative.
	\begin{displaymath}
	\xymatrix{P^*V\ar[r]^{\hat{\gamma}}\ar[d]^{\tilde{\mathfrak{q}}}&T^*V\ar[d]^{\tilde{\mathfrak{m}}}\\
		P^*V\ar[r]^{\hat{\gamma}}&T^*V}
	\end{displaymath}
	Since $\hat{\gamma}=\hat{\alpha}\otimes I_1$ and $\pi\hat{\alpha}=Id_{\wedge^*V}$, where $\pi$ is defined in the proof of Lemma \ref{cd}, the equation $\hat{\gamma}\tilde{\mathfrak{q}}=\tilde{\mathfrak{m}}\hat{\gamma}$ holds if and only if the equation  $\tilde{\mathfrak{q}}=(\pi\otimes I_1)\tilde{\mathfrak{m}}\hat{\gamma}$ holds. Let us check their components $P^kV\to P^lV$ as follow. For any homogeneous elements $x_i\in V$, we have
	\begin{align*}
	&\tilde{\mathfrak{q}}(x_1\wedge\cdots\wedge x_{k-1}\otimes x_k)\\
	=&\sum\limits_{\sigma \in Sh(k-l,1,l-2)}\epsilon(\sigma)\hat{\gamma}(\hat{\mathfrak{m}}_{k-l+1})(x_{\sigma(1)},\cdots, x_{\sigma(k-l+1)})\wedge x_{\sigma(k-l+2)}\wedge\cdots\wedge x_{\sigma(k-1)}\otimes x_k\\
	&+\sum\limits_{\sigma \in Sh(l-1,k-l)}(-1)^{\sum\limits_{t=1}^{l-1}|x_{\sigma(t)}|}\epsilon(\sigma)x_{\sigma(1)}\wedge\cdots\wedge x_{\sigma(l-1)}\otimes\hat{\gamma}(\hat{\mathfrak{m}}_{k-l+1})(x_{\sigma(l)},\cdots, x_{\sigma(k-1)}, x_k)\\
	=&\sum\limits_{\sigma \in Sh(1,\cdots,1,l-2)}\epsilon(\sigma)\hat{\mathfrak{m}}_{k-l+1}(x_{\sigma(1)},\cdots, x_{\sigma(k-l+1)})\wedge x_{\sigma(k-l+2)}\wedge\cdots\wedge x_{\sigma(k-1)}\otimes x_k\\
	&+\sum\limits_{\sigma \in Sh(l-1,1,\cdots,1)}(-1)^{\sum\limits_{t=1}^{l-1}|x_{\sigma(t)}|}\epsilon(\sigma)x_{\sigma(1)}\wedge\cdots\wedge x_{\sigma(l-1)}\otimes\hat{\mathfrak{m}}_{k-l+1}(x_{\sigma(l)},\cdots, x_{\sigma(k-1)}, x_k)
	\end{align*}
	On the other hand, one obtains
	\begin{align*}
	&(\pi\otimes I_1)\tilde{\mathfrak{m}}\hat{\gamma}(x_1\wedge\cdots\wedge x_{k-1}\otimes x_k)\\
	=&\sum\limits_{\sigma \in \mathbb{S}_{k-1}}\epsilon(\sigma)(\pi\otimes I_1)\tilde{\mathfrak{m}}(x_{\sigma(1)}\otimes\cdots\otimes x_{\sigma(k-1)}\otimes x_k)\\
	=&\sum\limits_{\sigma \in \mathbb{S}_{k-1}}\sum\limits_{i=0}^{l-2}(-1)^{\sum\limits_{r=1}^{i}|x_{\sigma(r)}|}\epsilon(\sigma)(\pi\otimes I_1)(x_{\sigma(1)}\otimes\cdots\otimes x_{\sigma(i)}\otimes \hat{\mathfrak{m}}_{k-l+1}(x_{\sigma(i+1)},\cdots, x_{\sigma(i+k-l+1)})\\
	&\otimes \cdots\otimes x_{\sigma(k-1)}\otimes x_k)\\
	&+\sum\limits_{\sigma \in \mathbb{S}_{k-1}}(-1)^{\sum\limits_{r=1}^{l-1}|x_{\sigma(r)}|}\epsilon(\sigma)(\pi\otimes I_1)(x_{\sigma(1)}\otimes\cdots\otimes x_{\sigma(l-1)}\otimes \hat{\mathfrak{m}}_{k-l+1}(x_{\sigma(l)},\cdots, x_{\sigma(k-1)},x_k))\\
	=&\sum\limits_{\sigma \in \mathbb{S}_{k-1}}\sum\limits_{i=0}^{l-2}\frac{(-1)^{\sum\limits_{r=1}^{i}|x_{\sigma(r)}|}\epsilon(\sigma)}{(l-1)!}x_{\sigma(1)}\wedge\cdots\wedge x_{\sigma(i)}\wedge \hat{\mathfrak{m}}_{k-l+1}(x_{\sigma(i+1)},\cdots, x_{\sigma(i+k-l+1)})\wedge \cdots\wedge\\
	& x_{\sigma(k-1)}\otimes x_k\\
	&+\sum\limits_{\sigma \in \mathbb{S}_{k-1}}\frac{(-1)^{\sum\limits_{r=1}^{l-1}|x_{\sigma(r)}|}\epsilon(\sigma)}{(l-1)!}x_{\sigma(1)}\wedge\cdots\wedge x_{\sigma(l-1)}\otimes \hat{\mathfrak{m}}_{k-l+1}(x_{\sigma(l)},\cdots, x_{\sigma(k-1)},x_k)\\
	=&\sum\limits_{\sigma \in Sh(1,\cdots,1,l-2)}\epsilon(\sigma)\hat{\mathfrak{m}}_{k-l+1}(x_{\sigma(1)},\cdots, x_{\sigma(k-l+1)})\wedge x_{\sigma(k-l+2)}\wedge\cdots\wedge x_{\sigma(k-1)}\otimes x_k\\
	&+\sum\limits_{\sigma \in Sh(l-1,1,\cdots,1)}(-1)^{\sum\limits_{r=1}^{l-1}|x_{\sigma(r)}|}\epsilon(\sigma)x_{\sigma(1)}\wedge\cdots\wedge x_{\sigma(l-1)}\otimes\hat{\mathfrak{m}}_{k-l+1}(x_{\sigma(l)},\cdots, x_{\sigma(k-1)}, x_k)
	\end{align*}
	
	For an $A_{\infty}$-algebra $(V,\{\hat{\mathfrak{m}}_n\})$, we have $\tilde{\mathfrak{m}}^2=0$ by Lemma \ref{tm}. Then $\hat{\gamma}\tilde{\mathfrak{q}}^2=\tilde{\mathfrak{m}}^2\hat{\gamma}=0$. Because $\hat{\gamma}$ is injective by Lemma \ref{cd}, $\tilde{\mathfrak{q}}^2=0$. Applying Theorem \ref{tpl}, one derives $(V,\{\hat{\gamma}(\hat{\mathfrak{m}}_n)\})$ is a $PL_{\infty}$-algebra.
	
	 The other term can be deduced in a similar way.
\end{proof}
\begin{remark}
	The last result in Theorem \ref{tc} coincides with a result in \cite{lst} which is proved by constructing a graded Lie map. 
\end{remark}
As equivalent definitions,   homotopy algebras in the form of $(V,\{\mu_n\})$ have analogous relation. Replacing $\rho^{(1)}$ by $\rho^{(2)}$, we get the commutators of $(V,\{\mu_n\})$.
\begin{corollary}
	Define $\alpha:=\sum\limits_{n\geq1}\rho_{w_{n}}^{(2)}$, $\beta:=\sum\limits_{n\geq1}\sum\limits_{\sigma \in Sh(n-1,1)}\rho_{\sigma}^{(2)}$ and $\gamma:=\sum\limits_{n\geq1}\rho_{w_{n-1}}^{(2)}\otimes I_1$. 
	\begin{enumerate}
		\item Suppose that $(V,\{\mathfrak{m}_n\})$ is an $A_{\infty}$-algebra. Then $(V,\{\gamma(\mathfrak{m}_n)\})$ is a $PL_{\infty}$-algebra.
		\item For a $PL_{\infty}$-algebra $(V,\{\mathfrak{p}_n\})$, the collection $\{\beta(\mathfrak{m}_n)\}$ defines an $L_{\infty}$-algebra structure on $V$.
	\end{enumerate}
\end{corollary}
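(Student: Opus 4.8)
The plan is to obtain this statement from Theorem \ref{tc} by transporting that theorem across the suspension bijection of \cite{laj}, in exactly the way Theorem \ref{tpl} transports the coalgebraic characterization of $PL_\infty$-algebras from the degree $-1$ picture of Definition \ref{dapl} to the degree $n-2$ picture of Definition \ref{dal}. Recall that the assignment $\mathfrak{a}_n\leftrightarrow\hat{\mathfrak{a}}_n$ identifies an arity-$n$ map $\otimes^nV\to V$ of degree $n-2$ with an arity-$n$ map $\otimes^n(sV)\to sV$ of degree $-1$, and that under this identification an $A_\infty$- (resp. $PL_\infty$-, resp. $L_\infty$-) structure on $V$ in the sense of Definition \ref{dal} is the same thing as one on $sV$ in the sense of Definition \ref{dapl}; this is Lemma \ref{tm}, its $L_\infty$-analogue from \S\ref{s32}, and Theorem \ref{tpl} respectively. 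Since $\alpha=\sum_{n\ge1}\rho^{(2)}_{w_n}$, $\beta=\sum_{n\ge1}\sum_{\sigma\in Sh(n-1,1)}\rho^{(2)}_\sigma$ and $\gamma=\sum_{n\ge1}\rho^{(2)}_{w_{n-1}}\otimes I_1$ are built from $\rho^{(2)}$ in precisely the way that $\hat\alpha,\hat\beta,\hat\gamma$ of Lemma \ref{cd} are built from $\rho^{(1)}$, the whole corollary will follow once the bijection $(\,\cdot\,)\mapsto\widehat{(\,\cdot\,)}$ is shown to intertwine the two families of operators.

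The one computation to carry out is therefore: for an arity-$n$ map $\mathfrak{a}_n\colon\otimes^nV\to V$ of degree $n-2$,
$$\widehat{\mathfrak{a}_n\circ(\rho^{(2)}_\sigma\otimes I_1)}=\hat{\mathfrak{a}}_n\circ(\rho^{(1)}_\sigma\otimes I_1)\quad(\sigma\in\mathbb{S}_{n-1}),\qquad\widehat{\mathfrak{a}_n\circ\rho^{(2)}_\tau}=\hat{\mathfrak{a}}_n\circ\rho^{(1)}_\tau\quad(\tau\in\mathbb{S}_n).$$
Both identities are already implicit in the proof of Proposition \ref{p}(1): one only has to read that computation as an equality of maps rather than as an equivalence of invariance properties, the passage between the sign $sgn(\sigma)\epsilon(\sigma;x_1,\dots)$ on $V$ and the Koszul sign $\epsilon(\sigma;sx_1,\dots)$ on $sV$ being supplied by Lemma \ref{Per} (whose proof applies verbatim with $\mathbb{S}_n$ acting on $n$ letters in place of $\mathbb{S}_{n-1}$ on $n-1$ letters, which handles the second identity; cf. also \cite{laj}). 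Summing the first identity over $\sigma\in\mathbb{S}_{n-1}$, the second over $Sh(n-1,1)$ or over $\mathbb{S}_n$, and using that $(\,\cdot\,)\mapsto\widehat{(\,\cdot\,)}$ is $\mathbb{K}$-linear on maps of a fixed arity together with $w_m=\sum_{\sigma\in\mathbb{S}_m}\sigma$, one gets
$$\widehat{\gamma(\mathfrak{m}_n)}=\hat\gamma(\hat{\mathfrak{m}}_n),\qquad\widehat{\beta(\mathfrak{p}_n)}=\hat\beta(\hat{\mathfrak{p}}_n),\qquad\widehat{\alpha(\mathfrak{m}_n)}=\hat\alpha(\hat{\mathfrak{m}}_n).$$

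Granting these, both parts are immediate. For (1): if $(V,\{\mathfrak{m}_n\})$ is an $A_\infty$-algebra, then $(sV,\{\hat{\mathfrak{m}}_n\})$ is one in the sense of Definition \ref{dapl} by Lemma \ref{tm}; Theorem \ref{tc}(1) makes $(sV,\{\hat\gamma(\hat{\mathfrak{m}}_n)\})=(sV,\{\widehat{\gamma(\mathfrak{m}_n)}\})$ a $PL_\infty$-algebra in the sense of Definition \ref{dapl}; and Theorem \ref{tpl} then returns that $(V,\{\gamma(\mathfrak{m}_n)\})$ is a $PL_\infty$-algebra in the sense of Definition \ref{dal}. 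Part (2) is the same argument run through the $PL_\infty$/$L_\infty$ versions: pass from the $PL_\infty$-algebra $(V,\{\mathfrak{p}_n\})$ to $(sV,\{\hat{\mathfrak{p}}_n\})$ via Theorem \ref{tpl}, apply Theorem \ref{tc}(2) to get the $L_\infty$-structure $\{\hat\beta(\hat{\mathfrak{p}}_n)\}=\{\widehat{\beta(\mathfrak{p}_n)}\}$ on $sV$, and descend through the $L_\infty$-analogue of Lemma \ref{tm}. The only place asking for real work is the intertwining identity of the second paragraph, and even there the substance is already in \S\ref{s32}; the single genuine nuisance is keeping the two parities of $n$ in the suspension sign separate, which is exactly what Lemma \ref{Per} was set up to absorb.
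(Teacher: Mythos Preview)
Your proposal is correct and follows essentially the same approach as the paper: establish the intertwining identity $\widehat{\gamma(\mathfrak{m}_n)}=\hat{\gamma}(\hat{\mathfrak{m}}_n)$ via Lemma \ref{Per}, then chain together Lemma \ref{tm}, Theorem \ref{tc}, and Theorem \ref{tpl}. The only cosmetic difference is that you isolate the intertwining at the level of a single $\sigma$ and then sum, whereas the paper computes the full sum directly; both rest on the same sign computation from Lemma \ref{Per} and the proof of Proposition \ref{p}(1).
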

\begin{proof}
	Likewise, it is enough to verify the first claim. There is a commutative diagram. 
		\begin{displaymath}
	\xymatrix{\mathfrak{m}_n\ar[r]^{\gamma}\ar[d]^{\hat{}}&\mathfrak{q}_n\ar[d]^{\hat{}}\\
		\hat{\mathfrak{m}}_n\ar[r]^{\hat{\gamma}}\ar[u]^{}&\hat{\mathfrak{q}}_n\ar[u]^{}}
	\end{displaymath}
	Its commutativity can be deduced from $\widehat{\gamma(\mathfrak{m}_n)}=\hat{\gamma}(\hat{\mathfrak{m}}_n)$. Without loss of generality, we assume $n$ is even. 
	\begin{align*}
	&\widehat{\gamma(\mathfrak{m}_n)}(sx_1,\cdots,sx_n)\\
	=&(-1)^{\sum\limits_{i=1}^{n/2}|x_{2i-1}|}s\gamma(\mathfrak{m}_n)(x_1,\cdots,x_n)\\
	=&(-1)^{\sum\limits_{i=1}^{n/2}|x_{2i-1}|}\sum\limits_{\sigma \in \mathbb{S}_{n-1}}sgn(\sigma)\epsilon(\sigma)s\mathfrak{m}_n(x_{\sigma(1)},\cdots,x_{\sigma(n-1)},x_n)
	\end{align*}
	By Lemma \ref{Per}, we can derive the composited map in the other side as follow.
	 \begin{align*}
	 &(-1)^{\sum\limits_{i=1}^{n/2}|x_{2i-1}|}\sum\limits_{\sigma \in \mathbb{S}_{n-1}}sgn(\sigma)\epsilon(\sigma)s\mathfrak{m}_n(x_{\sigma(1)},\cdots,x_{\sigma(n-1)},x_n)\\
	 =&\sum\limits_{\sigma \in \mathbb{S}_{n-1}}(-1)^{\sum\limits_{i=1}^{n/2}|x_{\sigma(2i-1)}|}\epsilon(\sigma;sx_1,\cdots,sx_{n-1})s\mathfrak{m}_n(x_{\sigma(1)},\cdots,x_{\sigma(n-1)},x_n)\\
	 =&\sum\limits_{\sigma \in \mathbb{S}_{n-1}}\epsilon(\sigma;sx_1,\cdots,sx_{n-1})\hat{\mathfrak{m}}_n(sx_{\sigma(1)},\cdots,sx_{\sigma(n-1)},sx_n)\\
	 =&\hat{\gamma}(\hat{\mathfrak{m}}_n)(sx_1,\cdots,sx_n).
	 \end{align*}
Hence $\widehat{\gamma(\mathfrak{m}_n)}=\hat{\gamma}(\hat{\mathfrak{m}}_n)$.	 
	 If $(V,\{\mathfrak{m}_n\})$ is an $A_{\infty}$-algebra, then $(sV,\{\hat{\mathfrak{m}}_n\})$ is an $A_{\infty}$-algebra by Lemma \ref{tm}. Hence  $(sV,\{\hat{\gamma}(\hat{\mathfrak{m}}_n)\})$ is a $PL_{\infty}$-algebra by Theorem \ref{tc}. As a result of Theorem \ref{tpl}, $\{\gamma(\mathfrak{m}_n)\}$ gives a $PL_{\infty}$-algebra structure on $V$.
\end{proof}
Using Proposition \ref{ni}, we can get the relation of $n$-ary algebras immediately.
\begin{corollary}\label{co}
	Let $V$ be a vector space.
	\begin{enumerate}
		\item Every partially associative $n$-algebra $(V,\mathfrak{m})$ carries a pre Lie $n$-algebra structure $\mathfrak{p}$ defined by $$\mathfrak{p}(x_1,\cdots,x_n):=\sum\limits_{\sigma \in \mathbb{S}_{n-1}}sgn(\sigma)\mathfrak{m}(x_{\sigma(1)},\cdots,x_{\sigma(n-1)},x_n).$$
		\item Every pre Lie $n$-algebra $(V,\mathfrak{p})$ carries a Lie $n$-algebra structure $\mathfrak{l}$ defined by $$\mathfrak{l}(x_1,\cdots,x_n):=\sum\limits_{\sigma \in Sh(n-1,1)}sgn(\sigma)\mathfrak{p}(x_{\sigma(1)},\cdots,x_{\sigma(n)}).$$
	\end{enumerate}
\end{corollary}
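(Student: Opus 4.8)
The plan is to deduce both statements from Proposition~\ref{ni} together with the previous Corollary, the one transporting the constructions ``$A_{\infty}\to PL_{\infty}$'' and ``$PL_{\infty}\to L_{\infty}$'' to the degree $n-2$ setting of Definition~\ref{dal} via the maps $\gamma=\sum_{n\geq1}\rho^{(2)}_{w_{n-1}}\otimes I_1$ and $\beta=\sum_{n\geq1}\sum_{\sigma\in Sh(n-1,1)}\rho^{(2)}_{\sigma}$. No new computation with structure equations is needed; everything reduces to chasing the identification of homotopy algebras furnished by Proposition~\ref{ni}.

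For (1), I start from a partially associative $n$-algebra $(V,\mathfrak{m})$. By Proposition~\ref{ni}(1), the associated collection $\{\mathfrak{m}_i\}$ on $\overline{V}$ — with $\mathfrak{m}_i=0$ for $i\neq n$ and $\mathfrak{m}_n$ prescribed by (\ref{mn}) — is an $A_{\infty}$-algebra in the sense of Definition~\ref{dal}. The previous Corollary then makes $(\overline{V},\{\gamma(\mathfrak{m}_i)\})$ a $PL_{\infty}$-algebra. Since $\gamma$ acts componentwise, $\gamma(\mathfrak{m}_i)=0$ for $i\neq n$ and $\gamma(\mathfrak{m}_n)=\mathfrak{m}_n\circ(\rho^{(2)}_{w_{n-1}}\otimes I_1)$. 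The next step is to recognise this as exactly the degree $n-2$ collection attached by (\ref{mn}) to the $n$-ary operation $\mathfrak{p}$ in the statement. Here one uses that $\overline{V}$ is concentrated in degrees $0,n-2,2n-4$: any homogeneous tensor in $\otimes^n\overline{V}$ of total degree $0$ or $n-2$ has at most one tensor factor of nonzero degree, so every Koszul sign $\epsilon(\sigma)$ with $\sigma\in\mathbb{S}_{n-1}$ is trivial, whence $\gamma(\mathfrak{m}_n)(x_1,\cdots,x_n)=\sum_{\sigma\in\mathbb{S}_{n-1}}sgn(\sigma)\mathfrak{m}(x'_{\sigma(1)},\cdots,x'_{\sigma(n-1)},x'_n)=\mathfrak{p}(x'_1,\cdots,x'_n)$; on all higher total degrees $\mathfrak{m}_n$, and therefore $\gamma(\mathfrak{m}_n)$, vanishes, matching (\ref{mn}) for $\mathfrak{p}$. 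Thus $\{\gamma(\mathfrak{m}_i)\}$ is precisely the collection $\{\mathfrak{p}_i\}$ of Proposition~\ref{ni}, and the ``if'' direction of Proposition~\ref{ni}(2) yields that $(V,\mathfrak{p})$ is a pre Lie $n$-algebra.

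Part (2) is the mirror image. Given a pre Lie $n$-algebra $(V,\mathfrak{p})$, Proposition~\ref{ni}(2) produces the $PL_{\infty}$-algebra $(\overline{V},\{\mathfrak{p}_i\})$; the previous Corollary produces the $L_{\infty}$-algebra $(\overline{V},\{\beta(\mathfrak{p}_i)\})$; the same degree bookkeeping shows $\beta(\mathfrak{p}_i)=0$ for $i\neq n$ and $\beta(\mathfrak{p}_n)(x_1,\cdots,x_n)=\sum_{\sigma\in Sh(n-1,1)}sgn(\sigma)\mathfrak{p}(x'_{\sigma(1)},\cdots,x'_{\sigma(n)})=\mathfrak{l}(x'_1,\cdots,x'_n)$, so $\{\beta(\mathfrak{p}_i)\}=\{\mathfrak{l}_i\}$; and the ``if'' direction of Proposition~\ref{ni}(3) gives that $(V,\mathfrak{l})$ is a Lie $n$-algebra.

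The only point requiring care — everything else being routine — is the identification $\{\gamma(\mathfrak{m}_i)\}=\{\mathfrak{p}_i\}$ (resp. $\{\beta(\mathfrak{p}_i)\}=\{\mathfrak{l}_i\}$): one must check that on the concentrated space $\overline{V}$ the sign $sgn(\sigma)\epsilon(\sigma)$ coming from $\rho^{(2)}$ collapses to $sgn(\sigma)$ precisely on the degree components ($0$ and $n-2$) on which $\mathfrak{m}_n$ (resp. $\mathfrak{p}_n$) is supported, and that no factorial factors survive because $\gamma$ and $\beta$ carry none. This is exactly why the auxiliary space in Proposition~\ref{ni} is taken concentrated in degrees $0,n-2,2n-4$. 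Alternatively, one could prove (1) directly from the Lemma characterising pre Lie $n$-algebras by $\mu\circ\mu=0$, verifying via a shuffle manipulation that the first-slot antisymmetrisation of a partially associative $\mathfrak{m}$ squares to zero, but that route is more laborious.
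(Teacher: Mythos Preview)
Your proposal is correct and follows exactly the route the paper intends: the paper's own proof is the single sentence ``Using Proposition~\ref{ni}, we can get the relation of $n$-ary algebras immediately,'' and what you have written is precisely the unpacking of that sentence via the previous Corollary on $(\overline{V},\{\gamma(\mathfrak{m}_i)\})$ and $(\overline{V},\{\beta(\mathfrak{p}_i)\})$. Your care in checking that the Koszul signs $\epsilon(\sigma)$ collapse to $1$ on the relevant degree components of $\overline{V}$ is the one detail the paper leaves implicit, and your observation that at most one tensor factor carries nonzero degree is the right justification.
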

Specially, we have
\begin{corollary}
	A pre Lie $n$-algebra $(V,\mathfrak{p})$ is a $n$-Lie admissible, that is, 
	$$\mathfrak{l}(x_1,\cdots,x_n):=\sum\limits_{\sigma \in \mathbb{S}_{n}}sgn(\sigma)\mathfrak{p}(x_{\sigma(1)},\cdots,x_{\sigma(n)})$$ is a Lie $n$-algebra structure.
\end{corollary}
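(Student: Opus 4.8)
The plan is to deduce this from Corollary \ref{co}(2). Set $\mathfrak{l}'(x_1,\ldots,x_n):=\sum_{\sigma\in Sh(n-1,1)}sgn(\sigma)\,\mathfrak{p}(x_{\sigma(1)},\ldots,x_{\sigma(n)})$, which already defines a Lie $n$-algebra structure on $V$ by that corollary. It then suffices to show that the $\mathbb{S}_n$-symmetrization $\mathfrak{l}$ appearing in the statement is a nonzero scalar multiple of $\mathfrak{l}'$, and finally to note that rescaling does not affect the Lie $n$-algebra axioms.

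First I would record the transversal statement used throughout: embedding $\mathbb{S}_{n-1}$ in $\mathbb{S}_n$ as the stabilizer of $n$, an $(n-1,1)$-unshuffle is precisely a permutation which is increasing on $\{1,\ldots,n-1\}$, hence is uniquely determined by its value at $n$; since there are exactly $n=[\mathbb{S}_n:\mathbb{S}_{n-1}]$ of them, $Sh(n-1,1)$ is a full set of left coset representatives, so $\mathbb{S}_n=\bigsqcup_{\tau\in Sh(n-1,1)}\tau\,\mathbb{S}_{n-1}$. Equivalently, in $\mathbb{K}\mathbb{S}_n$ one has $w_n=\bigl(\sum_{\tau\in Sh(n-1,1)}\tau\bigr)w_{n-1}$, which is the conceptual reason behind the next step.

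Then, for fixed $\tau\in Sh(n-1,1)$ and $\delta\in\mathbb{S}_{n-1}$, put $y_j:=x_{\tau(j)}$; since $\delta$ fixes $n$ we get $\mathfrak{p}(x_{\tau\delta(1)},\ldots,x_{\tau\delta(n)})=\mathfrak{p}(y_{\delta(1)},\ldots,y_{\delta(n-1)},y_n)$, and the left-symmetry relation $\mathfrak{p}=\mathfrak{p}\circ(\rho_{\delta}\otimes I_1)$ of Definition \ref{pl} rewrites this as $sgn(\delta)\,\mathfrak{p}(y_1,\ldots,y_n)=sgn(\delta)\,\mathfrak{p}(x_{\tau(1)},\ldots,x_{\tau(n)})$. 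Multiplying by $sgn(\tau\delta)=sgn(\tau)\,sgn(\delta)$, the factors $sgn(\delta)^2=1$ cancel; summing over the $(n-1)!$ choices of $\delta$ and then over $\tau$ yields $\mathfrak{l}=(n-1)!\,\mathfrak{l}'$.

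Finally, since $\mathbb{K}$ has characteristic $0$, the scalar $(n-1)!$ is invertible; moreover both the symmetry condition $\mu=\mu\circ\rho_\sigma$ for $\sigma\in\mathbb{S}_n$ and the defining structure identity of a Lie $n$-algebra are homogeneous in $\mu$ (linear, resp. quadratic), so any nonzero scalar multiple of a Lie $n$-algebra structure is again one. Hence $\mathfrak{l}=(n-1)!\,\mathfrak{l}'$ equips $V$ with a Lie $n$-algebra structure, i.e. $(V,\mathfrak{p})$ is $n$-Lie admissible. The only point requiring care is the sign bookkeeping in the coset collapse of the third step; everything else is formal and follows directly from Corollary \ref{co}(2).
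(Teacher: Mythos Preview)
Your proof is correct and follows exactly the same approach as the paper: the paper's proof consists solely of the identity $\sum_{\sigma\in\mathbb{S}_n}sgn(\sigma)\mathfrak{p}(x_{\sigma(1)},\ldots,x_{\sigma(n)})=(n-1)!\sum_{\sigma\in Sh(n-1,1)}sgn(\sigma)\mathfrak{p}(x_{\sigma(1)},\ldots,x_{\sigma(n)})$, leaving the coset decomposition, the use of left-symmetry, and the appeal to Corollary~\ref{co}(2) implicit. You have simply spelled out these steps in full, so there is nothing to add.
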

\begin{proof}That is because
	\begin{align*}
	\sum\limits_{\sigma \in \mathbb{S}_{n}}sgn(\sigma)\mathfrak{p}(x_{\sigma(1)},\cdots,x_{\sigma(n)})=(n-1)!\sum\limits_{\sigma \in Sh(n-1,1)}sgn(\sigma)\mathfrak{p}(x_{\sigma(1)},\cdots,x_{\sigma(n)}),
	\end{align*}
	for a pre Lie $n$-algebra operation $\mathfrak{p}$.
\end{proof}

\end{document}